\newcommand{\B}{\mathcal{B}}
\newcommand{\D}{\mathcal{D}}
\newcommand{\Z}{\mathbb{Z}}
\newcommand{\xto}{\xrightarrow}
\newcommand{\Hom}{\operatorname{Hom}\nolimits}
\newcommand{\onto}{\twoheadrightarrow}
\newcommand{\fmod}{\operatorname{mod}\nolimits}
\newcommand{\add}{\operatorname{add }\nolimits}
\newcommand{\modtop}{\operatorname{top }\nolimits}
\newcommand{\soc}{\operatorname{soc }\nolimits}
\newcommand{\im}{\operatorname{Im}\nolimits}
\newcommand{\End}{\operatorname{End}\nolimits}
\newcommand{\Ker}{\operatorname{Ker}\nolimits}
\newcommand{\ext}{\operatorname{Ext}\nolimits}
\newcommand{\kdim}{\operatorname{dim_\mathit{k}}\nolimits}
\newcommand{\gldim}{\operatorname{gldim}\nolimits}
\newcommand{\bsm}{\begin{smallmatrix}}
\newcommand{\esm}{\end{smallmatrix}}
\theoremstyle{plain}
 \newtheorem{thm}{Theorem}[section]
 \newtheorem{prop}[thm]{Proposition}
 \newtheorem{lemma}[thm]{Lemma}
 \newtheorem{cor}[thm]{Corollary}
\theoremstyle{definition}
 \newtheorem{defin}[thm]{Definition}
\theoremstyle{remark}
 \newtheorem{exm}[thm]{Example}
 \newtheorem*{notation}{Notation}
\date{February 28, 2018}
\begin{document}

\title[Quasi-hereditary covers of higher zigzag-algebras]{Quasi-hereditary covers of higher zigzag-algebras}
\author{Gabriele Bocca\\ University of East Anglia, Norwich, UK \\ {g.bocca@uea.ac.uk}}

\begin{abstract}
The aim of this paper is to define and study some quasi-hereditary covers for higher zigzag algebras. We will show how these algebras satisfy three different Koszul properties: they are Koszul in the classical sense, standard Koszul and Koszul with respect to the standard module $\Delta$, according with the definition given in \cite{Madsen1}. This last property gives rise to a well defined duality and the $\Delta$-Koszul dual will be computed as the path algebra of a quiver with relations.
\end{abstract}

\maketitle
\tableofcontents

\section*{Introduction}

Higher zigzag-algebras were first defined in \cite{GuoYun}, \cite{Guo2016} under the name of $n$\emph{-cubic pyramid algebras}, in relation with translation quivers appearing in higher representation theory. Independently they appeared in \cite{JG} and \cite{GI} in connection with \emph{higher preprojective algebras}. The name \emph{higher zigzag-algebras} is inspired by the fact that, under some conditions, their definition generalizes the construction of Huerfano and Khovanov \cite{hk2000}. A particularly nice set of examples is given by higher zigzag algebras of $n$-Auslander algebras $T_s^{(n)}(k)$ defined by Iyama in \cite{iyama-ct}. Iyama's ``cone'' construction is recursive, so we have examples of higher zigzag-algebras of algebras $T_s^{(n)}(k)$ with any given global dimension. Since they come from higher Auslander algebras of type $A$ quivers, these algebras have been called \emph{type A} higher zigzag-algebras. In \cite{JG} the author proved that they are symmetric and have a nice presentation as path algebras of some quivers modulo zero and commutativity relations.     
Type $A$ classical zigzag-algebras have made their appearance in many areas of representation theory of algebras and finite groups. In particular they have been studied in relation with the action of braid groups on derived categories (see for example \cite{seidelthomas}, \cite{hk2000}).
A slightly different version of these algebras appeared also in the work of many authors about \emph{quasi-hereditary algebras} (see e.g. \cite{Klamt}, \cite{KhS} and \cite{MT}). More explicitly they admit quasi-hereditary covers in the sense of Rouquier \cite{rouquier2008q} and these covers are isomorphic to quotients of ``bigger'' zigzag-algebras. Moreover these particular quasi-hereditary algebras are Koszul and, when endowed with different gradings, provide an interesting example of $\Delta$-Koszul algebras. The theory of generalized Koszul properties was first introduced in \cite{GRS}, then developed by Madsen in \cite{Madsen1} and, focusing on the quasi-hereditary case, in \cite{Madsen2}. In this article we will define some quasi-hereditary covers of higher zigzag-algebras and we will show that the results proved in \cite{Madsen2} about generalized Koszul duality apply for such algebras. Moreover we will compute explicitly their $\Delta$-Koszul dual.

This paper is organized as follows. In Section 1 we give the definition of higher zigzag-algebras as bound quiver algebras, then in Section 2 we recall the definition of quasi-hereditary algebras and we define quasi-hereditary covers of higher zigzag-algebras. A strong exact Borel subalgebra of these covers is also computed. In Section 3 we prove that our quasi-hereditary covers are Koszul and standard Koszul. Section 4 focuses on the more general notion of $\Delta$-Koszul quasi-hereditary algebra. After recalling some general facts we prove that this property is satisfied by our algebras. To conclude, in Section 5 we explicitly compute the $\Delta$-Koszul dual and we prove that that this new algebras are also Koszul in the classical sense. 

Throughout this paper $k$ will denote an algebraically closed field and $\Lambda$ a finite dimensional $k$-algebra. All modules are finitely generated right modules and the composition of morphisms $fg$ means that $g$ is applied first and then $f$. We will denote by $\mod\Lambda$ the category of finitely generated modules and by $\Hom_\Lambda(M,N)$ the vector space of $\Lambda$-module morphisms between $M$ and $N$. We will also work with (non-negatively) graded algebras: if $\Lambda=\bigoplus_{n\ge 0}\Lambda_n$ is graded with graded shift $\langle\cdot\rangle$ (that is $(M\langle j\rangle)_i=M_{i-j}$ for any graded $\Lambda$-module $M$), then $\mathsf{gr}\Lambda$ denotes the category of finitely generated graded $\Lambda$-modules and $\Hom_{\mathsf{gr}\Lambda}(M,N)$ is the vector space of graded morphisms of degree zero between $M$ and $N$. We put $\Hom_\Lambda(M,N)=\bigoplus_{i\ge 0}\Hom_{\mathsf{gr}\Lambda}(M,N\langle i\rangle)$.  If $Q=(Q_0,Q_1)$ is a quiver and $\alpha\in Q_1$ is an arrow of $Q$, $s(\alpha)$ denotes the source of $\alpha$ and $t(\alpha)$ the target: $s(\alpha)\xto{\alpha} t(\alpha)$. For two consecutive arrows $\xto{\alpha}\xto{\beta}$ their concatenation is denoted by $\alpha\beta$ so that, in the path algebra $kQ$, $\alpha\beta\neq 0$ and $\beta\alpha=0$.

\textbf{Acknowledgements} The author would like to thank Vanessa Miemietz and Joseph Grant for the guidance and help provided during the preparation of this article. He also acknowledge the financial support from the School of Science of University of East Anglia, through all the duration of his PhD, during which this article has been written.

\section{Higher zigzag-algebras} \label{algebra definition}
We start by giving a presentation for $(n+1)$-zigzag-algebras $Z^n_s$ of type $A_s$ as path algebras over quivers with relations, inspired by \cite{GI} and \cite{n-aprIyama}.

\begin{defin} \label{defin zigzag}

\begin{enumerate}
\item For $n \geq 1$ and $s \ge 1$, let $Q^{(n,s)}$ be the quiver with vertices
\[Q_0^{(n,s)} = \{x=(x_0,x_1,\cdots,x_{n}) \in\Z_{\ge0}^{n+1} \mid \sum_{i=0}^{n}x_i=s-1\}\]
and arrows
\[Q_1^{(n,s)} = \{x \xto{\alpha_i} x+f_i \mid i \in \{0, \ldots, n\}, x,x+f_i \in Q_0^{(n,s)}\},\]
where $f_i$ denotes the vector
\[ f_i = (0, \ldots,0, \overset{i-1}{-1}, \overset{i}{1}, 0, \ldots,0)\in\Z^{n+1} \]
(cyclically, that is $f_{0} = (1, 0, \ldots, 0, -1)$).
\item For $n \geq 1$ and $s \geq 1$, we define the $k$-algebra $Z^n_s$ to be the quiver algebra of $Q^{(n,s)}$ with the following relations:

For any $x\in Q_0^{(n,s)}$ and $i,j\in \{0, \ldots, n\}$ satisfying $x+f_i$, $x+f_i+f_j\in Q_0^{(n,s)}$,
\[(x \xto{\alpha_i} x+f_i \xto{\alpha_j} x+f_i+f_j)
=\begin{cases}
(x \xto{\alpha_j} x+f_j \xto{\alpha_i} x+f_i+f_j) & \text{if } x+f_j\in Q_0^{(n,s)},\\
0&\text{if } i=j.
\end{cases}\]

Note that relations can be rewritten as 
\[
\alpha_i\alpha_j=\begin{cases} \alpha_j\alpha_i & \text{if } \alpha_j\alpha_i\neq 0, \\ 0 & \text{if } i=j. \end{cases}
\]

\end{enumerate}
\end{defin}

Clearly in the previous definition any $(n+1)$-cycle is of the form
\[ x \xto{\sigma(1)} x + f_{\sigma(1)} \xto{\sigma(2)} x + f_{\sigma(1)} + f_{\sigma(2)} \xto{\sigma(3)} \cdots \xto{\sigma(n)} x + f_{\sigma(1)} + \cdots + f_{\sigma(n)} \xto{\sigma(n+1)} x, \]
for some $\sigma \in \mathfrak{S}_{n+1}$. By commutativity relations any two cycles are equal in $Z_s^n$ and each of these cycles correspond to the generator of the socle of the indecomposable projective module $P_x$. Hence, since the vertices of each $n+1$-cycle are distinct, we deduce the following:

\begin{prop}
Let $Z=Z^n_s$ be a higher zigzag-algebra and $P_x,P_y$ two indecomposable projective $Z$-modules. Then
\begin{itemize}
\item If $x\neq y$ then $\kdim\Hom_Z(P_x,P_y)\leq 1$.
\item If $x=y$ then $\kdim\Hom_Z(P_x,P_x)=2$ and the $\Hom$-space is generated by the identity and the map $\varepsilon_x:P_x\to \soc(P_x)$. 
\end{itemize}
\end{prop}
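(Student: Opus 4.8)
The plan is to reduce the statement to counting paths in the quiver $Q^{(n,s)}$ modulo the relations of Definition~\ref{defin zigzag}. Writing $Z=kQ^{(n,s)}/I$ and $P_x=e_xZ$, any homomorphism $P_x\to P_y$ is right multiplication by an element of $e_yZe_x$, so $\Hom_Z(P_x,P_y)\cong e_yZe_x$ as $k$-spaces, and $e_yZe_x$ is spanned by the residues of the paths in $Q^{(n,s)}$ from $x$ to $y$. (Alternatively one could read these dimensions off the explicit $k$-basis of $Z^n_s$ recorded in \cite{JG}.) So I would reduce everything to describing these paths up to the relations.

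The combinatorial lemma at the heart of the argument is: \emph{a path is nonzero in $Z$ only if it uses each arrow direction $\alpha_i$ at most once}. The mechanism is that a subpath $v\xto{\alpha_i}v+f_i\xto{\alpha_i}v+2f_i$ vanishes by the relation with $i=j$, while the commutativity relations allow one to permute the directions occurring along a path; so if some $\alpha_i$ occurred twice, one tries to bring the two occurrences next to each other and obtain such a vanishing subpath.

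Granting the lemma, the rest is linear algebra of the vectors $f_i$. A nonzero path from $x$ to $y$ uses a subset $S\subseteq\{0,\dots,n\}$ of directions, each once, in some order (any two such orders being equal in $Z$ by commutativity), and $y-x=\sum_{i\in S}f_i$. Since $f_0,\dots,f_{n-1}$ are linearly independent and $\sum_{i=0}^{n}f_i=0$, a proper nonempty $S$ is uniquely recovered from $\sum_{i\in S}f_i$. Hence for $x\ne y$ all nonzero paths from $x$ to $y$ use one and the same $S$, so they are all equal in $Z$ and $\kdim\Hom_Z(P_x,P_y)\le1$. For $x=y$ the condition $\sum_{i\in S}f_i=0$ forces $S=\emptyset$ or $S=\{0,\dots,n\}$, so the only nonzero paths from $x$ to itself are the idempotent $e_x$ and the $(n+1)$-cycle $c_x$; these are linearly independent because $e_x\notin\rad Z$ while $0\ne c_x\in\rad^{\,n+1}Z$, whence $\kdim\Hom_Z(P_x,P_x)=2$. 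Finally, under the identification above $e_x$ corresponds to the identity, and right multiplication by $c_x$ to a map $P_x\to P_x$ with image $c_xZ=kc_x=\soc P_x$; since $c_x$ generates $\soc P_x$, this is the map $\varepsilon_x$.

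The step I expect to be the real obstacle is making the lemma rigorous. An individual commutativity relation at a length-two subpath $v\xto{\alpha_i}v+f_i\xto{\alpha_j}v+f_i+f_j$ is available only when the fourth corner $v+f_j$ of the square is again in $Q_0^{(n,s)}$, and because of the cyclic shape of the $f_i$ this can fail for a pair of cyclically consecutive directions, even along a genuinely nonzero path. Thus a naive left-to-right sorting may stall, and one has to show that there is nevertheless always a route of admissible commutativity moves bringing a repeated direction into an adjacent pair (in small cases this works by detouring through a different admissible square). Carrying this out using only the description $Q_0^{(n,s)}=\{x\in\Z_{\ge0}^{n+1}:\sum_i x_i=s-1\}$ is the technical core of the proof.
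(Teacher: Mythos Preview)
Your proposal is correct and in spirit matches the paper, but it is far more detailed than what the paper actually does. The paper gives no proof beyond the sentence preceding the proposition: it asserts that the $(n{+}1)$-cycle generates the (simple) socle of $P_x$, notes that all such cycles at $x$ are equal by commutativity, and then observes that the vertices visited by an $(n{+}1)$-cycle are distinct. From this the proposition is immediate: every nonzero element of $e_xZ$ is, up to scalar, an initial segment of such a cycle, so for each $y\neq x$ there is at most one nonzero path from $x$ to $y$, while for $y=x$ there are exactly the trivial path and the full cycle.

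Your route is the same argument unpacked: your lemma ``each direction occurs at most once in a nonzero path'' is precisely what underlies the paper's assertion about the socle, and your linear-algebra step on the $f_i$ is a sharper form of ``the vertices of the cycle are distinct''. The worry you flag about the availability of commutativity moves is genuine, and the paper avoids it by taking the socle description as known (implicitly deferring to \cite{JG}; note also that later in the paper, in the proof that $\Gamma$ has a PBW basis, exactly this obstruction is analysed and shown to occur only for the pair $(\alpha_0,\alpha_1)$ at a boundary vertex). If you want a self-contained argument without case-by-case detours, the cleanest fix is the one you already mention parenthetically: cite the explicit $k$-basis of $Z^n_s$ from \cite{JG}, which makes the lemma a direct read-off rather than a sorting argument.
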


\section{Quasi-hereditary covers of higher zigzag-algebras}

In this section we will describe a construction of \emph{quasi-hereditary covers} (``qh-covers'' for short) for higher zigzag-algebras. They will provide a good example for a more general Koszul theory (\cite{Madsen1}) for which it is possible to compute explicitly the ``Koszul dual algebra''.

\subsection{Quasi-hereditary algebras}

Let $\Lambda$ be a finitely generated $k$-algebra and suppose that we can label the set of simple $\Lambda$-modules by a partially ordered set $(I,\leq)$. $I$ is often called a \emph{set of weights} for $\Lambda$.
\begin{defin}\label{qhdefin}
For every $i\in I$ the \emph{standard module} $\Delta_i$ is the largest quotient of the indecomposable projective module $P_i$ having no simple composition factor $S_j$ for $i<j$. Dually the \emph{costandard module} $\nabla_i$ is the largest submodule of the indecomposable injective $I_i$ having no composition factor $S_j$ for $i<j$. Let $\Delta=\bigoplus_{i\in I}\Delta_i$ and $\nabla=\bigoplus_{i\in I}\nabla_i$.

The algebra $\Lambda$ is said to be \emph{quasi-hereditary} if the following hold:
\begin{enumerate}[(i)]
\item $\End_\Lambda(\Delta_i)\cong k$.
\item $\Lambda_\Lambda$ admits a $\Delta$-filtration, that is there exists a filtration
\[0=M_0\subset M_1\subset\ldots\subset M_{n-1}\subset M_n=\Lambda \]
such that $M_k/ M_{k-1}$ is a standard module for every $k=1,\ldots,n$.
\end{enumerate}
\end{defin} 

For the classical definition and basic properties see for instance \cite{CPS} and \cite{DR}

The definition of quasi-hereditary algebra is ``self-dual'' in the sense that $\Lambda$ is $\Delta$-filtered if and only if the module $D(\Lambda)$ is $\nabla$-filtered (\cite{DR2})

Let us recall some basic properties of quasi-hereditary algebras that will be needed in the sequel (for a proof we refer to \cite{DR2}):

\begin{prop}
Let $\Lambda$ be a qh-algebra, then:
\begin{enumerate}
\item $\Lambda$ has finite global dimension.
\item For all $i,j\in I$ and $n>0$, $\ext^n_\Lambda(\Delta_i,\nabla_j)=0$.
\item For all $i,j\in I$ we have 
\[ \Hom_\Lambda(\Delta_i,\nabla_j)\cong\begin{cases}0 \mbox{ if }i\neq j \\ k \mbox{ if }i=j .\end{cases}\]
\end{enumerate}
\end{prop}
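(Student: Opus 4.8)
These three statements are the standard structural facts about quasi-hereditary algebras, and each has a short conceptual proof once one has the $\Delta$-filtration of $\Lambda$ and the dual $\nabla$-filtration of $D(\Lambda)$ from the Ringel–Dlab–Dror theory.

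For part (1), the plan is to argue by induction on $|I|$. Pick a maximal element $i\in I$; then $\Delta_i=P_i$, and the two-sided ideal $J=\Lambda e_i\Lambda$ generated by the corresponding idempotent is a heredity ideal: it is projective as a left (equivalently right) module, $J^2=J$, and $e_i\Lambda e_i\cong k$. The quotient $\Lambda/J$ is again quasi-hereditary with weight poset $I\setminus\{i\}$, so by induction $\gldim(\Lambda/J)<\infty$; combining this with the fact that $J$ is a heredity ideal gives $\gldim\Lambda\le\gldim(\Lambda/J)+2<\infty$ by the standard estimate for heredity ideals.

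For parts (2) and (3), the key input is the vanishing/orthogonality between $\Delta$-filtered and $\nabla$-filtered modules. First I would record the elementary computation: for the standard and costandard modules attached to weights $i,j$, any nonzero map $\Delta_i\to\nabla_j$ must send the top $S_i$ of $\Delta_i$ into $\nabla_j$, and since $\soc\nabla_j=S_j$ while the only composition factor of $\Delta_i$ that can map onto a submodule containing $S_j$ forces $i=j$; a short analysis of tops and socles together with the defining maximality of $\Delta_i$ and $\nabla_j$ then yields that $\Hom_\Lambda(\Delta_i,\nabla_j)$ is $k$ when $i=j$ and $0$ otherwise, giving (3). For the higher Ext-vanishing in (2), I would use that $\Lambda$ has a $\Delta$-filtration and $D(\Lambda)$ has a $\nabla$-filtration, so that each projective $P$ is $\Delta$-filtered and each injective is $\nabla$-filtered; then a dimension-shifting argument — resolve $\Delta_i$ by projectives, each of which is $\Delta$-filtered, and use that $\ext^{>0}$ between a module with a $\Delta$-filtration and one with a $\nabla$-filtration vanishes — reduces everything to the $\Hom$-computation of (3). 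Equivalently one proves directly by induction on filtration length that $\ext^n_\Lambda(M,N)=0$ for $n>0$ whenever $M$ is $\Delta$-filtered and $N$ is $\nabla$-filtered.

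The main obstacle is really just bookkeeping: verifying that $J=\Lambda e_i\Lambda$ is a heredity ideal for a maximal weight $i$ (projectivity of $J$ and the identification $e_i\Lambda e_i\cong k$, which uses condition (i) of Definition~\ref{qhdefin}), and checking that $\Lambda/J$ inherits the quasi-hereditary structure with the restricted poset — these are the points where one must be slightly careful. Once that is in place, the induction for (1) and the dimension-shift for (2) are formal, and (3) is a direct socle/top argument. Since all of this is classical, I would simply cite \cite{CPS}, \cite{DR}, \cite{DR2} for the details and sketch only the inductive skeleton above.
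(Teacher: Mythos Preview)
Your sketch is correct and follows the classical route (heredity chain induction for finite global dimension, socle/top analysis for the $\Hom$-computation, dimension shifting through $\Delta$- and $\nabla$-filtrations for the $\ext$-vanishing). The paper itself does not give a proof of this proposition at all: it simply states the result and refers the reader to \cite{DR2}, which is exactly the reference you propose to cite, so your approach is entirely consistent with --- and in fact more detailed than --- what the paper does.
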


Now we give the following definition of \emph{quasi-hereditary cover} for a $k$-algebra $\Lambda$ due to Rouquier (\cite{rouquier2008q}):

\begin{defin}
Let $\Lambda$ be a quasi-hereditary algebra and $M$ a finitely generated projective $\Lambda$-module. The pair $(\Lambda,M)$ is a \emph{quasi-hereditary cover} (or \emph{qh-cover}) for $A=\End_\Lambda(M)$ if the restriction of the functor 
\[ F=\Hom_\Lambda(M,-):\fmod \Lambda\to\fmod A\]
to the subcategory of projective $\Lambda$-module is fully faithful.
\end{defin}

%To that purpose recall the following theorem (Theorem 1.1 of \cite{} \textcolor{red}{finiteness rep dim di Iyama}):

%\begin{thm}Let $A$ be an artin algebra. Then any module $M\in \fmod A$ is the direct summand of a module $N\in \fmod A$ such that $\End_A(N)$ is a quasi-hereditary algebra. \end{thm}
\subsection{Covers for higher zigzag-algebras}

Let $Z^+=Z^n_{s+1}$ be the $(n+1)$-zigzag-algebra of type $A_{s+1}$ and consider the presentation of $Z^+$ as the path algebra of the quiver \\$(Q_0^{(n,s+1)},Q_1^{(n,s+1)})$ with relations as given in Section \ref{algebra definition} (Definition \ref{defin zigzag}). Denote by $I$ the set of vertices. In the quiver we have arrows labelled by
\[\alpha_k:(x_0,\ldots,x_{k-1},x_{k},\ldots,x_{n})\to (x_0,\ldots,x_{k-1}-1,x_{k}+1,\ldots,x_{n})\]
for $k=1,\ldots n$ and $\alpha_0$ is defined cyclically. Denote by $Z=Z^n_s$ the $(n+1)$-zigzag-algebra of type $A_s$; we can select a subset $J\subset I$ such that $Z\cong \End_{Z^+}(\bigoplus_{y\in J}P^+_y)$ where $P_y^+$ is the projective cover of the simple $Z^+$-module associated to the vertex $y$ (this is \cite{JG}, Proposition 4.4). To be precise $J=\{(x_0,x_1,\ldots,x_n)\in I\mid x_0\neq 0\}$. Put $P^+_J=\bigoplus_{y\in J}P^+_y$.

For $K=I\setminus J$, let
\[\Gamma(Z)=Z^+/ \left( e_z\alpha_0\alpha_1 | z\in K \right) \]
so that we have a surjective morphism of $k$-algebras: $ Z^+\to\Gamma(Z)$
that induces a fully faithful embedding: $\fmod\Gamma(Z)\to\fmod Z^+$. 
Note that the vertices in $K$ are precisely the ones of the kind $(0,x_1,\ldots,x_{n})$, so they are not the target of any arrow $\alpha_0$. Viceversa, vertices in $J$ are always the target of some arrow $\alpha_0$. In the following we put $\Gamma=\Gamma(Z)$.

\begin{lemma}\label{zigzag projectives}
The action of $Z^+$ on the module $P_J^+$ factors over $\Gamma$.
\end{lemma}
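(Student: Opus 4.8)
The statement to prove is that the $Z^+$-action on $P_J^+ = \bigoplus_{y \in J} P_y^+$ factors through the quotient map $Z^+ \to \Gamma = Z^+/(e_z\alpha_0\alpha_1 \mid z \in K)$. Equivalently, I must show that every element of the defining ideal annihilates $P_J^+$, i.e. that $P_J^+ \cdot e_z\alpha_0\alpha_1 = 0$ for all $z \in K$. Since the ideal is generated (as a two-sided ideal) by these elements, and $P_J^+$ is a right module, it suffices to check that right multiplication by each generator $e_z\alpha_0\alpha_1$ kills $P_J^+$; but actually the cleaner approach is to observe that the ideal, being generated by paths, is spanned by all paths of the form $p \cdot e_z \alpha_0\alpha_1 \cdot q$, and $P_J^+ \cdot (p e_z\alpha_0\alpha_1 q) \subseteq (P_J^+ p e_z) \cdot \alpha_0\alpha_1 q$, so it is enough to show that the path $\alpha_0\alpha_1$ starting at any vertex $z \in K$ acts as zero on $P_J^+$. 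Concretely, a $k$-basis of $P_y^+ = e_y Z^+$ is given by (equivalence classes of) paths starting at $y$, and I want: for any path $p$ from some $y \in J$ to some $z \in K$, the path $p\alpha_0\alpha_1$ is zero in $Z^+$, i.e. is zero in $e_y Z^+$.

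The key combinatorial point is a degree/length argument using the grading by path length together with the structure of $Z^+ = Z^n_{s+1}$. The algebra $Z^n_{s+1}$ is a higher zigzag algebra, hence (by the results of \cite{JG} recalled in Section~\ref{algebra definition}) it is symmetric, so every indecomposable projective $P_y^+$ has a one-dimensional socle spanned by a full $(n+1)$-cycle based at $y$, and paths of length $\geq n+2$ vanish. So if $p$ is a path from $y \in J$ to $z \in K$ and $p\alpha_0\alpha_1 \neq 0$, then since $p\alpha_0\alpha_1$ has length $\geq 2$ and is nonzero, and since by the commutativity relations any nonzero path is equal to a subpath of an $(n+1)$-cycle (all arrows in it use distinct labels $f_i$), the arrows $\alpha_0, \alpha_1$ together with the arrows appearing in $p$ are pairwise distinct in label. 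Now here is the contradiction I want to extract: the vertex $z = (0, x_1, \ldots, x_n) \in K$ has first coordinate $0$, and applying $\alpha_0$ at $z$ would move to $z + f_0 = (1, x_1, \ldots, x_{n-1}, x_n - 1)$, which requires $x_n \geq 1$; then applying $\alpha_1$ moves to $(0, x_1 + 1, \ldots)$ — fine as a lattice point, but the real obstruction is that the path $p$ from $y \in J$ to $z$ must have "used up" an $\alpha_0$ already (since to get from a vertex with nonzero $0$-th coordinate into... wait, $y \in J$ just means $y_0 \neq 0$). Let me restate the crux.

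The main obstacle, and the real content, is the following claim: for $z \in K$, there is no path from any $y \in J$ to $z$ that can be extended by $\alpha_0\alpha_1$ without repeating an arrow label, OR more simply, the path $e_z\alpha_0\alpha_1$ is itself already in the socle-direction in a way incompatible with lying in $P_J^+$. I think the cleanest route is: $e_z\alpha_0\alpha_1$ is a path of length $2$ from $z$ to $z + f_0 + f_1$; by the commutativity relations $e_z\alpha_0\alpha_1 = e_z\alpha_1\alpha_0$ provided the intermediate vertex $z + f_1$ exists. But $z = (0, x_1, \ldots, x_n)$ with $x_0 = 0$, so $z + f_1 = (-1, x_1+1, x_2, \ldots, x_n) \notin Q_0$ since the first coordinate is negative. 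Hence $\alpha_1$ is not composable after $z$ at all in the "other order" — wait, but we need $\alpha_0$ first. After $\alpha_0$ we are at $(1, x_1, \ldots, x_n - 1)$, and then $\alpha_1$ goes to $(0, x_1+1, x_2, \ldots, x_n-1)$, which is a legitimate vertex. So $e_z\alpha_0\alpha_1$ may well be nonzero in $Z^+$ itself. Therefore the statement genuinely uses that we restrict to $P_J^+$: I must show no path from $J$ reaches $z$ and survives the extension. Here is the point: any path arriving at $z = (0, x_1, \ldots, x_n)$ — reading its arrows, it is (a representative equal to) a sub-path of an $(n+1)$-cycle, so it uses each label at most once; to arrive at a vertex whose $0$-th coordinate is $0$ starting from $y$ with $y_0 \neq 0$, the path must include the arrow $\alpha_0$ (the only arrow that decreases, cyclically, coordinate $0$ — indeed $\alpha_1$ decreases coordinate $0$! since $f_1 = (-1,1,0,\ldots,0)$). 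So let me re-examine: the arrows decreasing the $0$-th coordinate are exactly $\alpha_1$ (via $f_1 = (-1,1,0,\ldots)$), while $\alpha_0$ \emph{increases} it (via $f_0 = (1,0,\ldots,0,-1)$). So a path from $y \in J$ (with $y_0 \geq 1$) to $z \in K$ (with $z_0 = 0$) need not use $\alpha_0$, but if such a path $p$ is followed by $\alpha_0\alpha_1$: the $\alpha_1$ at the end would be a \emph{second} occurrence of label $1$ if $p$ already used $\alpha_1$ — and I claim $p$ must use $\alpha_1$ to decrease coordinate $0$ from $y_0 \geq 1$ down to $0$. Since repeated labels force the path to zero (no $(n+1)$-cycle, hence no nonzero path, repeats a label — the commutativity relations collapse any such path, and $\alpha_i\alpha_i = 0$ by the relation for $i = j$... more carefully, a nonzero path with a repeated label $i$ can, via commutativity, be rewritten to have two consecutive $\alpha_i$'s, which is $0$), we conclude $p\alpha_0\alpha_1 = 0$ in $Z^+$. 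Hence $P_y^+ \cdot \alpha_0\alpha_1$, restricted to the $e_z$-component, is zero for every $y \in J$, which is exactly what we need. So the plan is: (1) reduce to showing $e_z\alpha_0\alpha_1$ annihilates $P_y^+$ for all $y \in J$, $z \in K$; (2) translate this to: every path $p: y \to z$ with $y \in J$, $z \in K$ satisfies $p\alpha_0\alpha_1 = 0$; (3) show any such $p$ must contain an occurrence of the label-$1$ arrow (because coordinate $0$ strictly drops from $\geq 1$ to $0$ and only $\alpha_1$ can decrease it); (4) invoke that a nonzero path uses each label at most once (commutativity relations plus the $i=j$ relation), so $p\alpha_1 = 0$ already, a fortiori $p\alpha_0\alpha_1 = 0$. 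The main obstacle is step (3)–(4): making precise that "only $\alpha_1$ decreases coordinate $0$" (using the explicit $f_i$'s, noting $f_0$ increases it) and that repeated labels force vanishing; both follow from Definition~\ref{defin zigzag} and the cycle description immediately preceding the Proposition, but the bookkeeping with the cyclic index $\alpha_0$ should be written out carefully.
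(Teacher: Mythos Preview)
Your argument is correct and essentially identical to the paper's: both reduce to showing $p\,\alpha_0\alpha_1 = 0$ for any path $p$ from $y \in J$ to $z \in K$, observe that $p$ must use the label $\alpha_1$ (your coordinate-$0$ analysis makes explicit what the paper leaves implicit when it asserts $p \sim e_y\alpha_l\cdots\alpha_1 e_z$), and then conclude via commutativity and $\alpha_1\alpha_1 = 0$. One wording slip in your summary step (4): ``$p\alpha_1 = 0$ already, a fortiori $p\alpha_0\alpha_1 = 0$'' is not a valid inference (the vanishing of $p\alpha_1$ is vacuous since no $\alpha_1$ starts at $z$, and this does not by itself force $p\alpha_0\alpha_1 = 0$); the correct step, which you do state earlier in the proposal, is that $p\alpha_0\alpha_1$ contains the label $1$ twice and therefore vanishes.
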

\begin{proof}
It is enough to show that, for any $y\in J$, any element $e^+_z\alpha_0\alpha_1$ annihilates $e^+_yZ^+=P^+_y$, where $e^+_z$ (resp. $e^+_y$) is the primitive idempotent in $Z^+$ associated to the vertex $z\in K$ (resp. $y\in J$). Let $e^+_y\alpha_i\cdots\alpha_j e_z^+$ be a path from $y$ to $z$ corresponding to a generator of $P^+_y$, such that we can non-trivially multiply it on the right by $e_z^+\alpha_0\alpha_1$. By the relations $\alpha_i\alpha_j=\alpha_j\alpha_i$ of $Z^+$, such a path is equivalent to $e^+_y\alpha_l\cdots\alpha_1 e_z^+$. Then, again using the commutativity relations, we have $e^+_y\alpha_l\cdots\alpha_1 e_z^+\alpha_0\alpha_1\sim e^+_y\alpha_l\cdots\alpha_0\alpha_1\alpha_1=0$ 
\end{proof}

%we want to factor $Z^+$ by the ideal generated by all the paths that start at a vertex $z\in K$ and end in $z'\in K$ passing through a vertex $y\in J$: \[\widetilde{Z}=Z^+/\bigoplus_{z,z'\in K,y\in J}e_zZ^+e_yZ^+e_{z'} \]
%If we call $R=\bigoplus_{z,z'\in K,y\in J}e_zZ^+e_yZ^+e_{z'}$, we have that $P^+_J\cap R=0$ and then \[\End_{\widetilde{Z}}(P_J^+)\cong\End_{Z^+}(P_J^+)\cong Z\]
%\begin{rmk}\label{rmk-K} Note that the vertices in $K$ are precisely the ones of the kind \\ $(0,x_1,\ldots,x_{n})$, so they are not the target of any arrow $\alpha_0$. Viceversa, vertices in $J$ are always the target of some arrow $\alpha_0$. Moreover, if $z\in K$ and $y\in J$, then every arrow $\alpha_l:z\to y$ is of  type $\alpha_0$ and, by the description of the quiver $(Q_0^{(n,s+1)},Q_1^{(n,s+1)})$ (see Definition \ref{defin zigzag}), we can rewrite the new relations of the algebra $\Gamma$ as:\[ \bigoplus_{z,z'\in K,y\in J}e_zZ^+e_yZ^+e_{z'}= \left( e_z\alpha_0\alpha_1 | z\in K \right) \] \end{rmk}
As a corollary we have that:
\[\End_{\Gamma}(P_J^+)\cong\End_{Z^+}(P_J^+)\cong Z\]
where the first isomorphism comes from the fully faithful embedding $\fmod\Gamma\to\fmod Z^+$.
So $P^+_J$ is also a left $Z$-module and there is an adjunction 
\[G=-\otimes_Z P^+_J:\fmod Z\rightleftharpoons\fmod\Gamma:F=\Hom_{\Gamma}(P_J^+,-)\]
such that $GF\cong id$ when restricted to $\add P_J^+$. 

\begin{lemma}
Let $P_y$ be an indecomposable projective $\Gamma$-module such that $y\in J$. Then $P_y=I_y$ is also injective.
\end{lemma}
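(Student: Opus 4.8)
The plan is to exploit the symmetry of $Z^+=Z^n_{s+1}$ together with the explicit description of the vertex sets $J$ and $K$. Since $Z^+$ is a symmetric algebra (by \cite{JG}), we have $D(Z^+)\cong Z^+$ as $Z^+$-bimodules, so for every vertex $w$ the indecomposable projective $P^+_w$ is isomorphic to the indecomposable injective $I^+_w$. In particular, for $y\in J$, the projective $P^+_y$ is injective over $Z^+$. The idea is then to show that, for $y\in J$, the $\Gamma$-module $P_y=P^+_y\otimes_{Z^+}\Gamma$ actually coincides with $P^+_y$ (i.e.\ the quotient map $Z^+\to\Gamma$ does not affect $P^+_y$), and that it remains injective after passing to the full subcategory $\fmod\Gamma\subseteq\fmod Z^+$.

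First I would show $P^+_y e_w = 0$ whenever $w\in K$ and $y\in J$, so that $P^+_y$ is supported only on vertices of $J$. A path from $y=(y_0,\dots,y_n)$ with $y_0\neq 0$ to a vertex $w=(0,w_1,\dots,w_n)$ with first coordinate zero must contain the arrow $\alpha_1$ applied before any further decrease of the first coordinate; more precisely, to reach first coordinate $0$ from first coordinate $y_0\ge 1$ one must use the arrow $\alpha_1$ (the unique arrow lowering $x_0$, since $\alpha_0$ raises it cyclically) a total of $y_0$ times, and in particular the path ends with $\cdots\alpha_1$ after commuting (using $\alpha_i\alpha_j=\alpha_j\alpha_i$ as in the proof of Lemma~\ref{zigzag projectives}). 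But then this terminal $\alpha_1$, composed with the relation-generating element $e^+_w\alpha_0\alpha_1$ sitting at $w$, shows — by the same commutativity argument as in Lemma~\ref{zigzag projectives} — that the corresponding basis element is killed in $\Gamma$. Hence $P_y = P^+_y\otimes_{Z^+}\Gamma$ equals $P^+_y$ as a vector space (the relations imposed to form $\Gamma$ only touch paths through $K$, and $P^+_y$ has no such paths), so $P_y\cong P^+_y$ as right $\Gamma$-modules, where the $\Gamma$-action is the $Z^+$-action factored through $\Gamma$.

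Next, using that the embedding $\fmod\Gamma\to\fmod Z^+$ induced by the surjection $Z^+\to\Gamma$ is fully faithful and exact, injectivity transfers: if $I^+_y$ is injective over $Z^+$, then its restriction (which lies in $\fmod\Gamma$, being annihilated by the kernel of $Z^+\to\Gamma$, since $I^+_y\cong P^+_y\cong P_y$) is injective over $\Gamma$. Concretely, for any short exact sequence $0\to A\to B\to C\to 0$ in $\fmod\Gamma$, view it in $\fmod Z^+$; any $\Gamma$-map $A\to P_y = I^+_y$ is a $Z^+$-map, extends along $B$ over $Z^+$ by injectivity of $I^+_y$, and the extension is automatically a $\Gamma$-map since $B$ is a $\Gamma$-module. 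Therefore $P_y$ is injective in $\fmod\Gamma$, and combining with $P_y\cong P^+_y\cong I^+_y$ we conclude $P_y = I_y$ as desired.

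The main obstacle is the first step: verifying carefully that every path from a vertex in $J$ to a vertex in $K$ is, after applying the commutativity relations of $Z^+$, divisible on the right by $\alpha_1$, so that multiplying by $e^+_z\alpha_0\alpha_1$ produces a factor $\alpha_1\alpha_1 = 0$. This is essentially a bookkeeping argument about which coordinates the arrows $\alpha_i$ change — completely analogous to the computation already carried out in the proof of Lemma~\ref{zigzag projectives} — so I expect it to go through without difficulty, but it is the only place where the combinatorics of $Q^{(n,s+1)}$ really enters.
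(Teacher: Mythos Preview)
Your central claim that $P^+_y e_w = 0$ whenever $w\in K$ and $y\in J$ is false. Take any $y \in J$ with $y_0 = 1$: applying $\alpha_1$ gives a nonzero element $e_y\alpha_1 \in P^+_y$ sitting at the vertex $y + f_1 = (0, y_1+1, \ldots, y_n) \in K$. In Example~\ref{exm qhcover}, for instance, $e_4\alpha_1 e_7 \neq 0$. Your argument does not actually prove this claim either: what you verify is that $(e_y \pi e_w) \cdot e_w\alpha_0\alpha_1 = 0$ for any path $\pi$ from $y$ to $w$ --- precisely the content of Lemma~\ref{zigzag projectives} --- which says nothing about $e_y\pi e_w$ itself being zero.

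The fix is easy and your overall strategy survives. What you need for $P_y \cong P^+_y$ is not that $P^+_y$ avoids $K$, but that $e_y \cdot \ker(Z^+ \to \Gamma) = 0$. This \emph{is} Lemma~\ref{zigzag projectives}: any element of $e_y \cdot (e_z\alpha_0\alpha_1 \mid z\in K)$ is a sum of terms $e_y\, p\, e_z\alpha_0\alpha_1\, q$ with $p$ a path from $y\in J$ to $z\in K$, and each such term vanishes already in $Z^+$. Once $P_y = P^+_y$ is established, your injectivity-transfer argument (via the exact fully faithful embedding $\fmod\Gamma\hookrightarrow\fmod Z^+$ and the symmetry of $Z^+$) is correct and clean. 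For comparison, the paper takes a more direct route: it constructs an explicit isomorphism $e_y\Gamma \to D(\Gamma e_y)$ by sending a path $\pi$ starting at $y$ to $D(\pi')$, where $\pi'$ is the complementary path with $\pi\pi' = \varepsilon_y$ the minimal cycle at $y$; the non-vanishing of $\varepsilon_y$ in $\Gamma$ again rests on Lemma~\ref{zigzag projectives}. Your approach trades that explicit pairing for the global fact that $Z^+$ is symmetric.
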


\begin{proof}
Note that for any $y\in J$, every path in the quiver of $\Gamma$ is contained in a minimal cycle  at the vertex $y$ and this cycle is non-zero in $\Gamma$ since $P_y$ is also a module over $Z^+$. Denote by $\varepsilon_y$ the non-zero element in $\Gamma$ corresponding to such cycle (note that this is well defined since by commutativity relations all the minimal cycles at $y$ are equivalent). Let $\pi\in e_y\Gamma=P_y$ be a path starting at $y$ and $\pi'$ be its complementary path in a minimal cycle: $\varepsilon_y=e_y\pi\pi'e_y$.  Then the map $\pi \mapsto D(\pi')$ extends to a morphism of $\Gamma$-modules and it gives an isomorphism $P_y=e_y\Gamma\xto{\cong} I_y=D(\Gamma e_y)$. 
\end{proof}

We can define a partial order on $I$ by putting $x<y$ if and only if there is a path $\xymatrix{x\ar@{-->}[r]^\pi & y}$ in the quiver of $Z$, such that $\pi$ does not involve any arrow $\alpha_0$ (recall the presentation of $Z$ as in Definition \ref{defin zigzag}).

\begin{lemma}\label{stand modules}
The indecomposable standard modules of $\Gamma$ are either simple or uniserial with radical length two. Whenever we have an arrow $\alpha_0:x\to y$ then the standard module $\Delta_x$ has simple top $S_x$ and simple socle $S_y$.

Dually indecomposable costandard modules are either $\nabla_x=e_x\alpha_0\Gamma\subseteq I_x$, if $x\in J$, or $\nabla_x=I_x$, if $x\in K$.
\end{lemma}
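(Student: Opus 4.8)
The plan is to compute standard modules directly from the definition: $\Delta_x$ is the largest quotient of $P_x$ with no composition factor $S_j$ for $x<j$. First I would analyze the quiver of $\Gamma$ around a vertex $x$. The radical of $P_x$ is generated by the images of the arrows $\alpha_0,\alpha_1,\dots,\alpha_n$ out of $x$ (those that exist). The key observation is that for any arrow $\alpha_i$ with $i\neq 0$, the target $x+f_i$ satisfies $x<x+f_i$ in the partial order (the path consisting of the single arrow $\alpha_i$ uses no $\alpha_0$), so every composition factor of $P_x$ reachable by a path starting with $\alpha_i$, $i\neq 0$, must be killed in $\Delta_x$. What survives is only the part of $P_x$ reachable using $\alpha_0$ as the first step. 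Hence $\Delta_x$ is the quotient $P_x/\big(\sum_{i\neq 0}\operatorname{im}\alpha_i\big)$, which I would identify with $e_x\Gamma$ modulo the submodule generated by all $\alpha_i$, $i\neq 0$.

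The next step is to show this quotient is as claimed. If there is no arrow $\alpha_0$ out of $x$ — that is, $x\in K$, since vertices in $K$ are never the target of an $\alpha_0$, and by the cyclic indexing $\alpha_0$ out of $x=(0,x_1,\dots,x_n)$ would land at $(1,x_1,\dots,x_n-1)$; I need to check whether this arrow is present and, if so, whether $x < x+f_0$; in fact the path $\alpha_0$ does involve $\alpha_0$, so it does NOT witness $x<x+f_0$, and I should instead argue directly that in $\Gamma$ the relation $e_x\alpha_0\alpha_1=0$ for $x\in K$ forces the top of the $\alpha_0$-branch to vanish after we also quotient by the other arrows — giving $\Delta_x=S_x$. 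If there is an arrow $\alpha_0:x\to y$ (so $x\in J$ or, in the boundary case, still a vertex admitting such an arrow), then the quotient $P_x/(\sum_{i\neq 0}\operatorname{im}\alpha_i)$ has top $S_x$ and, in degree one, only the image of $\alpha_0$; and in degree two any continuation $\alpha_0\alpha_j$ is either zero (if $j=0$) or, by the commutativity relations, equals $\alpha_j\alpha_0$, which lies in $\sum_{i\neq 0}\operatorname{im}\alpha_i$ and is thus already zero in the quotient. So the radical is the simple module $S_y$, and $\Delta_x$ is uniserial of radical length two with top $S_x$ and socle $S_y$. This also shows $\operatorname{End}_\Gamma(\Delta_x)\cong k$, consistent with $\Gamma$ being quasi-hereditary.

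For the costandard modules I would dualize. Since $\Gamma$ is quasi-hereditary, $\nabla_x$ is the largest submodule of $I_x$ with no composition factor $S_j$ for $x<j$; equivalently, applying the duality $D$, $D(\nabla_x)$ is the standard module of $\Gamma^{\mathrm{op}}$ at $x$. The opposite algebra $\Gamma^{\mathrm{op}}$ has the same shape with all arrows reversed, and the partial order is unchanged (it was defined symmetrically via existence of an $\alpha_0$-free path). Running the same argument in $\Gamma^{\mathrm{op}}$: for $x\in J$, the vertex $x$ is the target of a unique $\alpha_0:z\to x$, and the costandard module is $\nabla_x=e_x\alpha_0\Gamma$, the submodule of $I_x=D(\Gamma e_x)$ spanned by the cycle-halves ending with that incoming $\alpha_0$, which is uniserial of length two; for $x\in K$, there is no incoming $\alpha_0$ at $x$ (vertices in $K$ are not targets of $\alpha_0$'s — but wait, in $\Gamma$ we quotiented exactly the compositions $e_z\alpha_0\alpha_1$ for $z\in K$, and $K$-vertices are sources of $\alpha_0$'s; checking the incoming arrows at a $K$-vertex shows they are all of type $\alpha_i$, $i\neq 0$, hence witness $z<x$ for no $z$... ), so no composition factor needs to be removed and $\nabla_x=I_x$.

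The main obstacle I expect is the bookkeeping at the ``cyclic'' boundary: carefully determining, for each $x$, exactly which arrows $\alpha_0,\dots,\alpha_n$ are present in the quiver of $\Gamma$ (as opposed to $Z^+$), since the defining relations $e_z\alpha_0\alpha_1$ for $z\in K$ kill certain length-two paths but the arrows themselves may remain, and then matching ``$x\in J$ vs.\ $x\in K$'' with ``$\alpha_0$ out of $x$ is nonzero in $\Gamma$ vs.\ not''. The clean way to handle this is to use the two earlier lemmas: Lemma \ref{zigzag projectives} tells us $P_J^+$ is a $\Gamma$-module, and the lemma that $P_y=I_y$ for $y\in J$ pins down the structure of those projectives, from which the radical series — and hence $\Delta_x$, $\nabla_x$ — can be read off without re-deriving the combinatorics of the quiver from scratch.
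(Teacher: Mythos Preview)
Your treatment of the standard modules is essentially right and matches the paper's approach: the only arrow out of $x$ whose target is $\leq x$ is $\alpha_0$, and $\alpha_0\alpha_0=0$ together with the commutativity relations kills everything in radical degree $\geq 2$. Your confusion over ``$x\in K$ versus no outgoing $\alpha_0$'' is unnecessary --- the dichotomy for $\Delta_x$ is \emph{whether there is an arrow $\alpha_0$ out of $x$} (equivalently $x_n\geq 1$), which is independent of membership in $J$ or $K$; vertices of $K$ can perfectly well have outgoing $\alpha_0$'s and then $\Delta_x$ has length two.

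The costandard part, however, contains a genuine error. You assert that for $x\in J$ the module $\nabla_x=e_x\alpha_0\Gamma$ is ``uniserial of length two''. It is not. For $x\in J$ the module $P_x=I_x$ is the full zigzag projective, and $e_x\alpha_0\Gamma$ is the submodule spanned by all paths $e_x\alpha_0\alpha_{i_1}\cdots\alpha_{i_k}$ with $i_1,\dots,i_k\in\{1,\dots,n\}$ distinct; this has one composition factor for each subset of $\{1,\dots,n\}$. In the paper's running example ($n=2$), $\nabla_5$ has composition factors $S_2,S_3,S_4,S_5$, not two. The source of the mistake is the phrase ``running the same argument in $\Gamma^{\mathrm{op}}$'': with the \emph{same} partial order, the arrows $\alpha_i^{\mathrm{op}}$ for $i\neq 0$ now point from larger to smaller vertices, so none of them are killed in forming $\Delta_x^{\Gamma^{\mathrm{op}}}$; only the single incoming $\alpha_0^{\mathrm{op}}$ is killed. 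Hence $\Delta_x^{\Gamma^{\mathrm{op}}}$ (and so $\nabla_x$) is large, not length two. Your dualisation strategy is salvageable, but it does not give what you claim.

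The paper's argument is more direct and avoids this trap: for $y\in J$ it uses the identification $I_y=P_y=e_y\Gamma$ and shows both inclusions $e_y\alpha_0\Gamma\subseteq\nabla_y$ and $\nabla_y\subseteq e_y\alpha_0\Gamma$ by analysing which paths starting at $y$ land at vertices $\leq y$ (those beginning with $\alpha_0$) versus $>y$ (those beginning with some $\alpha_i$, $i\neq 0$). For $z\in K$ the paper shows $\nabla_z=I_z$ by proving that \emph{every} path ending at $z$ that involves an $\alpha_0$ is already zero in $\Gamma$: commute $\alpha_0$ toward the end; either it meets an $e_{z'}\alpha_0\alpha_1$ with $z'\in K$ (zero by the defining relation of $\Gamma$), or it would have to be the final arrow, impossible since $K$-vertices are never targets of $\alpha_0$. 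You gesture at this but do not carry it through.
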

\begin{proof}
 By the construction of the quiver of $Z^+$, there exists an arrow $\alpha_l:y\to x$ for $x<y$ only if $l=0$. Hence $\Delta_x$ is non simple if and only if there is an arrow $x\xto{\alpha_0}y$; $S_y$ is the only composition factor in a radical filtration of $P_x$ such that $y<x$, since $\alpha_0\alpha_0=0$. Then standard modules are uniserial with radical length at most 2. 
 
Dually we show that, for any $y\in J$, the costandard module $\nabla_y$ is given by $e_y\alpha_0\Gamma\subseteq I_y\cong P_y$. First $e_y\alpha_0\Gamma\subseteq\nabla_y$ because $\alpha_0$ appears only once in any path starting at $y$, so any composition factor $S_z$ of $e_y\alpha_0\Gamma$ is such that $z\leq y$. Also $\nabla_y\subseteq e_y\alpha_0\Gamma$: if not we could find an element $e_y\alpha_j\cdots\alpha_k e_z$ in $\nabla_y$ with $j,\ldots,k\neq 0$ and $S_z$ would be a composition factor of $\nabla_y$ such that $z>y$, a contraddiction.

To conclude, if $z\in K$ then $\nabla_z=I_z$. Indeed, using commutativity relations and $e_z\alpha_0\alpha_1=0$, any path ending in $z$ involving an arrow $\alpha_0$ is zero in $\Gamma$. So any composition factor $S_z$ of $I_y$ satisfies $z\leq y$.\end{proof}

\begin{exm}\label{exm qhcover}
Consider the $2$-zigzag algebra $Z=Z^{(2,3)}$, with labels on the arrows accordingly to Definition \ref{defin zigzag} (for simplicity we label the vertices using natural numbers instead of tuples):

\[
\xymatrix@=1em{ & & 4 \ar[rd]^{\alpha_2} & & \\
          & 2 \ar[ur]^{\alpha_1}\ar[dr]_{\alpha_2} & & 5 \ar[dr]^{\alpha_2}\ar[ll]_{\alpha_0} & \\
          1 \ar[ur]^{\alpha_1} & & 3 \ar[ur]_{\alpha_1}\ar[ll]^{\alpha_0} & & 6 \ar[ll]^{\alpha_0}  }
\]
Then the quiver of $\Gamma$ is
\[
\xymatrix{ & & & 7\ar[rd]^{\alpha_2} & & & \\ 
               & & 4 \ar[rd]^{\alpha_2} \ar[ur]^{\alpha_1} & & 8\ar[ll]_{\alpha_0}\ar[rd]^{\alpha_2} & &  \\
          & 2 \ar[ur]^{\alpha_1}\ar[dr]^{\alpha_2} & & 5\ar[ur]^{\alpha_1} \ar[dr]^{\alpha_2}\ar[ll]_{\alpha_0} & & 9\ar[ll]_{\alpha_0}\ar[rd]^{\alpha_2} &  \\
          1 \ar[ur]^{\alpha_1} & & 3 \ar[ur]^{\alpha_1}\ar[ll]_{\alpha_0} & & 6 \ar[ur]^{\alpha_1} \ar[ll]_{\alpha_0} & & 0\ar[ll]_{\alpha_0}  }
\]
and the ideal of relations is generated by the usual relations of $Z^{(2,4)}$ with moreover $e_8\alpha_0\alpha_1=e_9\alpha_0\alpha_1=e_0\alpha_0\alpha_1 =0$. Note that in this example we have $J=\{1,2,\ldots,6 \}$ and $K=\{7,8,9,0\}$.

The Hasse quiver of the partial order on $I$ is the following:
\[
\xymatrix{ & & & 7\ar[rd] & & & \\ 
               & & 4 \ar[rd] \ar[ur] & & 8\ar[rd] & &  \\
          & 2 \ar[ur]\ar[dr] & & 5\ar[ur] \ar[dr] & & 9\ar[rd] &  \\
          1 \ar[ur] & & 3 \ar[ur] & & 6 \ar[ur]  & & 0  }
\]
so that $1<2<3<5\ldots$, $1<2<4<5\ldots$, $1<2<4<7\ldots$, etc.
\end{exm}

\begin{prop} \label{quasi hered}
The algebra $\Gamma=\Gamma(Z)$ is a quasi-hereditary cover of $Z$.
\end{prop}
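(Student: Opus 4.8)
For the final statement I would show the two things it asserts: that $\Gamma$ is quasi-hereditary with respect to the partial order on $I$ fixed above, and that $(\Gamma,P_J^+)$ satisfies Rouquier's cover condition, i.e. that $F=\Hom_\Gamma(P_J^+,-)$ is fully faithful on projective $\Gamma$-modules (the identification $\End_\Gamma(P_J^+)\cong Z$ being already in place).

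For quasi-heredity, condition (i) of Definition \ref{qhdefin} is immediate from Lemma \ref{stand modules}: every indecomposable $\Delta_x$ is simple or uniserial of Loewy length two with non-isomorphic top and socle, so $\End_\Gamma(\Delta_x)\cong k$. For condition (ii) I would exhibit a $\Delta$-filtration of each indecomposable projective $P_x$, read off from the quiver of $\Gamma$: its radical $\rad P_x$ is generated by the arrows leaving $x$; an arrow $\alpha_0:x\to x+f_0$, when it occurs, points to a vertex that is not strictly above $x$, and its contribution is absorbed so that the top quotient of $P_x$ is exactly $\Delta_x$, whereas each arrow $\alpha_i$ with $i\ne 0$ points to $x+f_i>x$ and generates a submodule that is a quotient of $P_{x+f_i}$, hence by induction on the height in the poset is $\Delta$-filtered with factors $\Delta_w$, $w\ge x+f_i>x$; splicing these pieces yields the filtration. (Equivalently: a maximal vertex $x$ has $\Delta_x=P_x$ and $\End_\Gamma(P_x)\cong k$, so $\Gamma e_x\Gamma$ is a heredity ideal, the standard modules of $\Gamma/\Gamma e_x\Gamma$ are again uniserial of length $\le2$, and one finishes by induction on $|I|$.)

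For the cover condition it suffices to show that the canonical map $\Hom_\Gamma(P_x,P_y)\to\Hom_Z(FP_x,FP_y)$ is bijective for all vertices $x,y$, and, since $P_J^+$ is a projective $\Gamma$-module, $F$ is exact. I would argue in two steps. \emph{Step 1}: for every $\Gamma$-module $N$ and every $T\in\add P_J^+$ the map $\Hom_\Gamma(N,T)\to\Hom_Z(FN,FT)$ is an isomorphism; it is enough to take $T=P_y$ with $y\in J$, and then $P_y=I_y$ is injective and one checks directly that both sides are naturally isomorphic to $\dual(Ne_y)$. \emph{Step 2}: $\Gamma$ has the double-centralizer property $\Gamma\cong\End_Z(P_J^+)$, equivalently $\Gamma_\Gamma$ admits a coresolution $0\to\Gamma\to T^0\to T^1$ with $T^0,T^1\in\add P_J^+$. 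Granting Step 2, for arbitrary $x,y$ one takes the summand $0\to P_y\to T^0_y\to T^1_y$ of that coresolution, applies the functors $\Hom_\Gamma(P_x,-)$ and (using exactness of $F$) $\Hom_Z(FP_x,-)$, identifies the two outer terms via Step 1, and deduces from the five lemma that $\Hom_\Gamma(P_x,P_y)\cong\Hom_Z(FP_x,FP_y)$; and the double-centralizer statement is exactly the cover condition.

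The main obstacle is Step 2: one has to show that every indecomposable projective $\Gamma$-module embeds into a module of $\add P_J^+$ with cokernel that again does so — i.e. that the relevant socles involve only the simples $S_y$ with $y\in J$. The combinatorial core is the observation that, because of the relations $e_z\alpha_0\alpha_1=0$ ($z\in K$) together with the commutativity relations, a nonzero path in the quiver of $\Gamma$ starting at a vertex of $K$ uses $\alpha_0$ at most once and never uses $\alpha_1$ after an $\alpha_0$; such a path therefore never returns to $K$ once it has left it. Together with $P_y=I_y$ for $y\in J$ this controls the socles in question, and the rest is a routine inspection of the quiver.
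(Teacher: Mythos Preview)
Your strategy is sound and genuinely different from the paper's on both points.

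For quasi-heredity the paper works on the dual side: it shows directly that every indecomposable injective is $\nabla$-filtered, using the chains of $\alpha_0$-arrows. Concretely, for $x\in J$ one has a short exact sequence $0\to\nabla_x\to I_x\xrightarrow{\alpha_0\cdot}I_y\to\ldots$, and for $z\in K$ one has $I_z=\nabla_z$; splicing these gives the required filtrations at once. This is cleaner than your direct $\Delta$-filtration sketch, where the ``splicing'' of the images $e_x\alpha_i\Gamma$ into a genuine filtration (they overlap) needs care; your heredity-chain alternative is fine but you still owe the verification that each successive quotient $\Gamma/\Gamma e_x\Gamma$ again has $P_{x'}=\Delta_{x'}$ at its new maximal vertex.

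For the cover condition the paper gives an elementary path argument: $F$ is obviously full, and for faithfulness one takes a nonzero $\pi:P_x\to P_z$ and exhibits an explicit element of $\Hom_\Gamma(P_J^+,P_x)$ on which $F\pi$ does not vanish, splitting into the cases $x\in J$ (use $\mathrm{id}_{P_x}$) and $x\in K$ (extend $\pi$ inside a minimal cycle to a path ending in $J$). Your double-centralizer route is more conceptual and in fact proves more (the equality $\Gamma\cong\End_Z(P_J^+)$), but your Step~2 is where the work hides and your sketch does not close it. The observation that a nonzero path from $z\in K$, once it has used $\alpha_0$, never returns to $K$ is correct and useful, but it does not by itself show that \emph{maximal} paths from $z$ must use $\alpha_0$ at all---you still need to argue that a path using only $\alpha_i$ with $i\ge2$ can always be extended (so that $\soc P_z\in\add S_J$). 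And the second step of the coresolution, namely that the cokernel of $P_z\hookrightarrow T^0$ again has socle supported on $J$, is only asserted; this is a separate (if similar) combinatorial check, not an immediate consequence of the first. Both facts are true, but ``routine inspection of the quiver'' undersells the amount of argument required; the paper's direct approach avoids this entirely.
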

\begin{proof}
First we show that $\Gamma$ is quasi-hereditary. By Lemma \ref{stand modules} $\End_\Gamma (\Delta_x)\cong k$ for every $x\in I$ so we only need to show that $\Gamma$ is $\Delta$-filtered.

We will prove that every indecomposable injective module is $\nabla$-filtered. 
Let $x,y,z\in I$ such that there are arrows $z\xto{\alpha_0}y\xto{\alpha_0}x$ and consider the corresponding morphisms between indecomposable injective modules $I_x\xto{\alpha_0\cdot} I_y\xto{\alpha_0\cdot}I_z$. Since the vertices $x$ and $y$ must belong to $J$ we have that $I_x=P_x=e_x\Gamma$ and $I_y=P_y=e_y\Gamma$. Hence $\im(I_x\xto{\alpha_0\cdot} I_y)=e_y\alpha_0\Gamma=\nabla_y$ and, since by the relations we have $\Ker(I_y\xto{\alpha_0\cdot} I_z)=e_y\alpha_0\Gamma$, the sequence $I_x\xto{\alpha_0\cdot} I_y\xto{\alpha_0\cdot}I_z$ is exact. If $z\in K$, then $\nabla_z=I_z$ and $I_y\xto{\alpha_0\cdot}I_z$ is an epimorphism since the image is the costandard module $\nabla_z$. Then for every $x\in I$ such that $z\xto{\alpha_0}y'\xto{\alpha_0}\cdots\xto{\alpha_0}y\xto{\alpha_0}x$ is a subquiver of the quiver of $\Gamma$ with $z\in K$, an injective resolution of the costandard module $\nabla_x$ is given by
\[ 0\to \nabla_x\to I_x \xto{\alpha_0\cdot} I_y \xto{\alpha_0\cdot} \cdots \xto{\alpha_0\cdot} I_{y'} \xto{\alpha_0\cdot} I_z\to 0. \]
This means that for any $x\in I$, either $x$ is in $K$ and $I_x=\nabla_x$ or there exists a short exact sequence 
\[0\to \nabla_x\to I_x \to \nabla_y \to 0\]
so that $I_x$ is $\nabla$-filtered.

The functor 
\[F=\Hom_{\Gamma(Z)}(P_J^+,-):\fmod\Gamma(Z)\to\fmod Z\]
is clearly full. To prove that it is faithful let $P_x$ and $P_z$ be two indecomposable projective $\Gamma$-modules and $\pi:P_x\to P_z$ a morphism between them; hence $\pi$ is given by an equivalence class in $\Gamma$ of a path from $z$ to $x$. The image of $\pi$ through $F$ is $F\pi:\Hom_\Gamma(P_J^+,P_x)\to \Hom_\Gamma(P_J^+,P_z)$ and it is given by composing with $\pi$ on the left. We consider two cases:
\begin{itemize}
\item[-] If $x\in J$ then $id_{P_x}\in\Hom_\Gamma(P_J^+,P_x)$. Suppose $F\pi=0$, then $F\pi(id_{P_x})=\pi\cdot id_{P_x}=0$ implies $\pi=0$. 
\item[-]  If $x\in K$, suppose $\pi\neq 0$ and let $\pi'$ be such that $\pi\pi'$ is a maximal path contained in a minimal cycle based at $z$ ending in a vertex $y\in J$. Obviously in $\pi\pi'$ any arrow $\alpha_i$ can appear at most once because it is part of a minimal cycle. Moreover it does not contain any subpath $\alpha_0\alpha_1$: if this was the case, another $\alpha_0$ would have to appear in $\pi\pi'$ for this path to end in $J$. Then $F\pi(\pi')=\pi\pi'$ is a non-zero morphism. 
\end{itemize}  
We have proved that $\pi\neq 0$ implies $F\pi\neq0$ so the functor $F$ is faithful and the proof is complete. \end{proof}
 
\begin{notation}
Despite the non-uniqueness of quasi-hereditary covers, we will from now on refer to $\Gamma=\Gamma(Z)$ as the qh-cover defined in this section.
\end{notation}

%Consider $y\in J$ and $x\in I$ such that there exists $x\xto{\alpha_0}y$. Then $P_y=I_y=e_y\Gamma$ is projective and injective and the kernel $C$ of the morphism $I_y\xto{\alpha_0\cdot} I_x$ is generated as a vector space by all the non-zero paths starting at $y$ non involving arrows $\alpha_0$. This means that $C$ has simple socle $S=\langle e_y\alpha_1\cdots\alpha_n e_x\rangle\cong S_x$ since the arrow $x\xto{\alpha_0}y$ is the only arrow $\alpha_0$ whose target is $y$. Hence $C$ is a submodule of $I_x$ whose composition factors are isomorphic to $S_{x'}$ for $x\ge x'$. The composition $I_y\to C\to I_x$ coincides with the unique map $I_y\xto{\alpha_0\cdot}I_x$ so $C$ is the largest submodule of $I_x$ satisfying the condition on composition factors and this means that $C\cong \nabla_x$. We have a short exact sequence:\[0\to \nabla_y\xto{\alpha_0\cdot} I_y \to \nabla_x \to 0\] that shows that $P_y=I_y$ is $\nabla$-filtered, so $\Gamma$ is quasi-hereditary. \end{proof}\\

%\begin{rmk} From the proof of Proposition \ref{quasi hered} we deduce that an injective coresolution of $\nabla_x$ is given by \[ 0\to \nabla_x\to I_x \xto{\alpha_0\cdot} I_y \xto{\alpha_0\cdot} \cdots \xto{\alpha_0\cdot} I_{y'} \xto{\alpha_0\cdot} I_z\to 0 \] for any $x\in I$, $y,y'\in J$ and $z\in K$. \end{rmk}

\subsection{Strong exact Borel subalgebras}
An important notion related to quasi-hereditary algebras are \textit{exact Borel subalgebras}, first defined by K\"{o}nig in \cite{Koe95}. The aim was, given a quasi-hereditary algebra $\Lambda$, to find a subalgebra of $\Lambda$ that somehow encodes the information about the standard filtration of projective $\Lambda$-modules.

\begin{defin}
Let $\Lambda$ be a quasi-hereditary algebra, $I$ an index set for the simple $\Lambda$-modules and $\leq$ the partial order on $I$. A subalgebra $B$ of $\Lambda$ is called an \emph{exact Borel subalgebra} if and only if the following three conditions are satisfied:
\begin{itemize}
\item The algebra $B$ has the same partially ordered set of indices of simple modules as $\Lambda$ and $B$ is directed, i.e. it is quasi-hereditary with simple standard modules;
\item The functor $\Lambda\otimes_B-$ is exact;
\item The functor $\Lambda\otimes_B-$ sends simple $B$-modules to standard $\Lambda$-modules and this correspondence preserves the indices.
\end{itemize}
An exact Borel subalgebra $B$ of $\Lambda$ is called \emph{strong} if $\Lambda$ has a maximal semisimple subalgebra that is also a maximal semisimple subalgebra of $B$.
\end{defin}

In general it is not true that every quasi-hereditary algebra has an exact Borel subalgebra. Nevertheless it has been proved in \cite{KKO13} that every quasi-hereditary algebra is Morita equivalent to a quasi-hereditary algebra that has an exact Borel subalgebra.

A useful tool to determine the existence of an exact Borel subalgebra is the following:
\begin{thm}\cite[Theorem A]{Koe95}\label{borel exist}
Let $\Lambda$ be a quasi-hereditary algebra and $B$ a subalgebra of $\Lambda$. Suppose that the index set of simple $\Lambda$-modules $I$ is in bijection with the index set of simple $B$-modules so that we have an induced partial order on simple $B$-modules. Then $B$ is an exact Borel subalgebra of $\Lambda$ if and only if $B$ with the partial order defined above is directed and satisfies the following condition:
\begin{itemize}
\item For each $x\in I$, the restriction from $\Lambda$-modules to $B$-modules gives an isomorphism of costandard modules: $(\nabla_x)_\Lambda\cong(\nabla_x)_B$.
\end{itemize} 
\end{thm}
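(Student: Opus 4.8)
This statement is \cite[Theorem A]{Koe95}; rather than reproving it in full I will indicate the line of argument, which runs entirely through the homological recognition of $\Delta$- and $\nabla$-filtered modules. Write $\mathrm{res}$ for restriction $\fmod\Lambda\to\fmod B$ along $B\hookrightarrow\Lambda$ and $\Lambda\otimes_B-$ for its (induction) left adjoint; recall that ``$B$ directed'' means $B$ is quasi-hereditary with $\Delta^B_x=S_x$ for every $x$, so that $\ext^i_B(S_x,\nabla^B_y)\cong\delta_{i0}\delta_{xy}k$ by the standard properties of quasi-hereditary algebras.

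The ingredients I would isolate first are two recognition lemmas and an Ext-adjunction. Over $\Lambda$: a module $M$ with $\ext^{>0}_\Lambda(M,\nabla_y)=0$ for all $y$ is $\Delta$-filtered, with $\Delta_y$ occurring $\kdim\Hom_\Lambda(M,\nabla_y)$ times; hence if also $\Hom_\Lambda(M,\nabla_y)\cong k$ for $y=x$ and $0$ otherwise, then $M\cong\Delta_x$. Over $B$ directed: since $\Delta^B_y=S_y$, a module $N$ with $\ext^{>0}_B(S_y,N)=0$ for all $y$ is $\nabla^B$-filtered (equivalently, injective), with $\nabla^B_y$ occurring $\kdim\Hom_B(S_y,N)$ times; so $\ext^i_B(S_y,N)\cong\delta_{i0}\delta_{xy}k$ forces $N\cong\nabla^B_x$. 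Thirdly, once $\Lambda$ is known to be projective over $B$ the functor $\Lambda\otimes_B-$ is exact and carries projectives to projectives, so it turns a projective $B$-resolution of $M$ into a projective $\Lambda$-resolution of $\Lambda\otimes_B M$, and the adjunction $\Hom_\Lambda(\Lambda\otimes_B M,N)\cong\Hom_B(M,\mathrm{res}\,N)$ derives to $\ext^i_\Lambda(\Lambda\otimes_B M,N)\cong\ext^i_B(M,\mathrm{res}\,N)$ for all $i$.

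With these in hand the two implications are short. If $B$ is an exact Borel subalgebra then it is directed by definition; exactness of $\Lambda\otimes_B-$ forces $\Lambda$ to be projective over $B$, so the Ext-adjunction applies, and combining it with the Borel axiom $\Lambda\otimes_B S_x\cong\Delta_x$ and the quasi-hereditary data $\ext^{>0}_\Lambda(\Delta_x,\nabla_y)=0$, $\Hom_\Lambda(\Delta_x,\nabla_y)\cong\delta_{xy}k$ gives $\ext^i_B(S_y,\mathrm{res}\,\nabla_x)\cong\delta_{i0}\delta_{xy}k$, whence $\mathrm{res}\,\nabla_x\cong\nabla^B_x$ by the directed recognition lemma. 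Conversely, assume $B$ directed with $\mathrm{res}\,\nabla_x\cong\nabla^B_x$ for every $x$. Since $\Lambda$ is quasi-hereditary, $D\Lambda$ (on the appropriate side) is $\nabla^\Lambda$-filtered; restricting that filtration and using the hypothesis, $\mathrm{res}(D\Lambda)$ is $\nabla^B$-filtered, hence injective over $B$, and dualising back shows $\Lambda$ is projective over $B$ — which is exactly what exactness of $\Lambda\otimes_B-$ amounts to. Now the Ext-adjunction yields $\ext^i_\Lambda(\Lambda\otimes_B S_x,\nabla_y)\cong\ext^i_B(S_x,\nabla^B_y)\cong\delta_{i0}\delta_{xy}k$, so $\Lambda\otimes_B S_x\cong\Delta_x$ by the $\Lambda$-recognition lemma; together with directedness, these are precisely the three Borel axioms.

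The point needing care is bookkeeping rather than any deep idea: one must fix the left/right module conventions consistently, so that ``$\Lambda$ projective over $B$'' refers to the side on which $\Lambda\otimes_B-$ acts and on which $D$ interchanges injectivity and projectivity; and in the converse direction the logical order matters, since projectivity of $\Lambda$ over $B$ must be extracted from the $D\Lambda$ argument \emph{before} the Ext-adjunction — which presupposes it — can be invoked.
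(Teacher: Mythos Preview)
The paper does not prove this statement at all: it is quoted verbatim as \cite[Theorem A]{Koe95} and used as an input to the subsequent proposition, with no argument supplied. Your sketch therefore goes beyond what the paper does; it is a faithful outline of Koenig's original proof (recognition of $\Delta$- and $\nabla$-filtered modules via Ext-vanishing against $\nabla$ and $\Delta$, combined with the derived induction--restriction adjunction once $\Lambda$ is known to be $B$-projective), and the logical order you flag --- extracting $B$-projectivity of $\Lambda$ from the $\nabla$-filtration of $D\Lambda$ before invoking the Ext-adjunction --- is indeed the crux.
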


\begin{prop}
Qh-covers of higher zigzag-algebras have strong exact Borel subalgebras.
\end{prop}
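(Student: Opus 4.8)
The plan is to exhibit an explicit subalgebra $B \subseteq \Gamma$ and verify König's criterion (Theorem \ref{borel exist}). The natural candidate is the subalgebra spanned by all paths in $\Gamma$ that do \emph{not} involve any arrow $\alpha_0$, together with the primitive idempotents $e_x$ for $x \in I$; concretely, $B$ is the path algebra of the subquiver of the quiver of $\Gamma$ obtained by deleting all arrows labelled $\alpha_0$, modulo the induced commutativity and zero relations. Since by the definition of the partial order an arrow $\alpha_\ell : y \to x$ with $x < y$ exists only when $\ell = 0$, the quiver of $B$ has arrows going strictly ``up'' in the order, so $B$ is directed; its standard modules are simple because there are no relations forcing anything into the socle once the $\alpha_0$'s are removed. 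Moreover $B$ contains the full maximal semisimple subalgebra $\prod_{x \in I} k\, e_x$ of $\Gamma$, which takes care of the ``strong'' part for free once $B$ is shown to be an exact Borel subalgebra. I would first record these observations (that $B$ has the right index set, that the order restricts correctly, and that $B$ is directed with simple standard modules).

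The heart of the argument is the costandard condition: for each $x \in I$ one must show the restriction functor $\fmod\Gamma \to \fmod B$ gives an isomorphism $(\nabla_x)_\Gamma \cong (\nabla_x)_B$. By Lemma \ref{stand modules} the $\Gamma$-costandard module is $\nabla_x = e_x\alpha_0\Gamma$ when $x \in J$ and $\nabla_x = I_x$ when $x \in K$. On the $B$-side, since $B$ is directed the costandard module $(\nabla_x)_B$ is the injective envelope of $S_x$ in $\fmod B$, which by the quiver description of $B$ has a basis indexed by the paths in the $\alpha_0$-free quiver \emph{ending} at $x$. So the task reduces to a combinatorial bijection: the paths ending at $x$ in the quiver of $B$ should biject with a $k$-basis of $\nabla_x$ as a $\Gamma$-module, compatibly with the $B$-action. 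For $x \in K$ this is essentially the statement that every path in $\Gamma$ ending at $x$ which uses an $\alpha_0$ is already zero in $\Gamma$ (because $e_x\alpha_0\alpha_1 = 0$ together with the commutativity relations kills it), which was observed inside the proof of Lemma \ref{stand modules}; hence $I_x$ already ``lives in the $\alpha_0$-free part'' and restricts to the $B$-injective at $x$. For $x \in J$ one writes $\nabla_x = e_x\alpha_0\Gamma$ and notes that, modulo the relation $\alpha_0\alpha_0 = 0$ and commutativity, every nonzero element of $e_x\alpha_0\Gamma$ is uniquely of the form $e_x\alpha_0 \pi$ with $\pi$ an $\alpha_0$-free path; multiplying on the left by $\alpha_0$ shifts the vertex by $f_0$ to a vertex $x' \in K$, and then $e_{x'}\alpha_0\Gamma \cong I_{x'}$ as above, so $\nabla_x \cong I_{x'}\langle 1\rangle$ as a $B$-module — i.e. the $\alpha_0$-free injective at the vertex $x' = x + f_0$. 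One then checks this matches $(\nabla_x)_B$, which by directedness of $B$ is again determined by the paths in the $\alpha_0$-free quiver, now ``based'' appropriately.

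The main obstacle I expect is the bookkeeping in the $x \in J$ case: one has to be careful that the socle/top structure of $e_x\alpha_0\Gamma$ as a $B$-module genuinely agrees with the $B$-injective envelope of $S_x$ and not some other module with the same composition factors — in particular that no commutativity relation in $\Gamma$ identifies two paths that remain distinct in $B$, which would make $(\nabla_x)_\Gamma$ a proper quotient of $(\nabla_x)_B$ rather than an isomorphic copy. This is where the precise form of the quiver $Q^{(n,s+1)}$ and the fact that $\alpha_0$ appears at most once in any path (used repeatedly in Lemmas \ref{zigzag projectives} and \ref{stand modules}) does the work: an $\alpha_0$-free path is determined by its multiset of $\alpha_i$'s, $i \neq 0$, and this multiset is unchanged under the $\Gamma$-relations, so the restriction is injective on each graded piece, hence an isomorphism by a dimension count. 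Once the costandard isomorphism is in place, Theorem \ref{borel exist} yields that $B$ is an exact Borel subalgebra, and since $B \supseteq \prod_x k\,e_x$ is the maximal semisimple subalgebra of $\Gamma$, it is strong.
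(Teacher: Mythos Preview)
Your overall strategy is precisely the paper's: the same subalgebra $B$ (the path algebra of the $\alpha_0$-free subquiver with the inherited square-zero and commutativity relations), directedness from the order, K\"onig's criterion (Theorem~\ref{borel exist}) for the costandard condition, and strongness from the shared maximal semisimple subalgebra $\bigoplus_{x\in I} k e_x$. The paper is extremely terse on the costandard step --- it just asserts that restriction sends $\Gamma$-costandards to $B$-costandards ``since they are generated by paths not involving arrows $\alpha_0$'' --- so your plan to unpack this in cases is reasonable.

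There is, however, a concrete slip in your $x \in J$ case. You write that $x' = x + f_0 \in K$, but $f_0 = (1,0,\ldots,0,-1)$ \emph{increases} the first coordinate, so $x + f_0$ has first coordinate $x_0 + 1 \geq 2$ and lies in $J$, not $K$. The subsequent claims (``$e_{x'}\alpha_0\Gamma \cong I_{x'}$'' and ``$\nabla_x \cong I_{x'}\langle 1\rangle$ as a $B$-module'') then do not hold as stated, and in any case the target you need is the $B$-injective envelope at $x$, not at $x'$. Your closing paragraph is much closer to what is actually required: under the isomorphism $P_x \cong I_x$ of Lemma preceding~\ref{stand modules}, the basis element $e_x\alpha_0\pi$ of $\nabla_x$ (with $\pi$ an $\alpha_0$-free path from $x+f_0$) corresponds to $D(\pi')$, where $\pi'$ is the complementary $\alpha_0$-free path ending at $x$ with $\alpha_0\pi\pi' = \varepsilon_x$. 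This identifies $\nabla_x|_B$ with the span of $\{D(\pi') : \pi' \text{ an } \alpha_0\text{-free path ending at } x\}$, which is exactly $D(Be_x) = I_x^B = (\nabla_x)_B$; compatibility with the right $B$-action follows from the commutativity relations and the fact that each $\alpha_i$ appears at most once in a nonzero path. Replace your $x' \in K$ argument with this and the proof goes through.
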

\begin{proof}
Let $Q$ be the quiver of $\Gamma$ and $Q'$ the subquiver of $Q$ with the same set of vertices $Q'_0=Q_0$ and arrows $Q'_1=\{\alpha_i\in Q_1 \mid i\neq 0\}$. The path algebra of $Q'$ bound by the ideal of relations $R'=\{\alpha_i\alpha_i \mid i\in I\}\cup\{ \alpha_i\alpha_j=\alpha_j\alpha_i \mid i,j\in I\text{ and }\alpha_i\alpha_j\neq 0\}$ is a subalgebra of $\Gamma$ and we will denote it by $B$. Since to obtain $Q'$ from $Q$ we removed exactly the arrows $\alpha_0$, we can identify the vertices of the two quivers and label them with the same set $I$. Note that, by comparing their presentations, the algebras $B$ and the quadratic dual of the higher Auslander algebra $\Lambda^{(n,s+1)}$ of type $A$ are isomorphic. The partial order on $I$ that we gave before is still well defined in the quiver $Q'$ since we have not removed any arrow $\alpha_i$ for $i\neq 0$. Then $B$ has simple standard modules and, for every projective indecomposable $B$-module $P$, any composition series of $P$ gives a $\Delta$-filtration. This means that $B$ is a quasi-hereditary algebra with simple standard modules. The restriction functor $\iota:\fmod\Gamma\to\fmod B$ sends costandard modules to costandard modules, since they are generated by paths not involving arrows $\alpha_0$, and the indices are preserved. Then the condition of Theorem \ref{borel exist} is satisfied and $B$ is a strong exact Borel subalgebra of $\Gamma$. \end{proof}

\section{Koszulity and standard Koszulity}

\subsection{Definitions and classic results}
Before going on, we recall the definition of Koszul algebras and some generalizations of this notion. We refer to \cite{BSGkoszul} for the definition of (classical) Koszul algebras and their basic properties. The definitions of \emph{standard Koszul} and \emph{T-Koszul} algebras are taken from \cite{ADLquasi} and \cite{Madsen2}, \cite{Madsen1} respectively.

Koszul algebras are graded algebras so let us consider a finite dimensional graded $k$-algebra $\Lambda=\bigoplus_{i=0}^t\Lambda_i$, such that $\Lambda_0\simeq k^s$ is semisimple.
Denote by $\mathsf{gr}\Lambda$ the category of finitely generated graded $\Lambda$-modules and for any graded module $M$ let $M\langle j\rangle$ be the \emph{jth graded shift} of $M$, with graded parts $(M\langle j\rangle)_i=M_{i-j}$.

\begin{defin} $\Lambda$ is called a \emph{Koszul algebra} if $\ext_{\mathsf{gr}\Lambda}^i(\Lambda_0,\Lambda_0\langle j\rangle)=0$ whenever $i\neq j$.
Suppose moreover that $\Lambda$ is quasi-hereditary; then $\Lambda$ is \emph{standard Koszul} if $\ext_{\mathsf{gr}\Lambda}^i(\Delta,\Lambda_0\langle j\rangle)=0$ whenever $i\neq j$. 
\end{defin}

\begin{defin}
A finitely genereted graded $\Lambda$-module is called \emph{linear} if $M$ admits a graded projective resolution 
\[ \cdots\to P^2\to P^1\to P^0 \to M \to 0 \]
such that $P^i$ is generated by its component of degree $i$, for any $i\geq 0$.
\end{defin}

The following fact is a very useful characterization of Koszul modules:

\begin{prop}\cite[Proposition 1.14.2]{BSGkoszul}
Let $M$ be a finitely generated graded $\Lambda$-module. The following are equivalent:
\begin{enumerate}
\item $M$ is linear.
\item $\ext_{\mathsf{gr}\Lambda}^i(M,\Lambda_0\langle j\rangle)=0$ unless $i=j$.
\end{enumerate}
\end{prop}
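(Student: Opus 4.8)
The plan is to read off both conditions from the minimal graded projective resolution of $M$. Since $\Lambda=\bigoplus_{i=0}^t\Lambda_i$ is finite dimensional with $\Lambda_0$ semisimple, its graded Jacobson radical is $\Lambda_{\geq 1}=\bigoplus_{i\geq 1}\Lambda_i$, which is nilpotent; hence every finitely generated graded $\Lambda$-module has a minimal graded projective resolution $\cdots\to P^2\to P^1\to P^0\to M\to 0$, unique up to isomorphism of complexes, characterised by the condition that each differential has image inside $\rad P^{i-1}=\Lambda_{\geq 1}P^{i-1}$. Before either implication I would record the elementary observation that, for any graded projective module $P$, the space $\Hom_{\mathsf{gr}\Lambda}(P,\Lambda_0\langle j\rangle)$ is nonzero if and only if $P$ has a generator sitting in degree $j$. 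Indeed $\Lambda_0\langle j\rangle$ is concentrated in degree $j$ and annihilated by $\Lambda_{\geq 1}$, so any graded homomorphism out of $P$ factors through $\modtop P=P/\Lambda_{\geq 1}P$; writing $P=\bigoplus_\ell P_{(\ell)}$ with $P_{(\ell)}$ the direct summand generated in degree $\ell$ (a sum of shifts $Q\langle\ell\rangle$ of projectives generated in degree $0$), one gets $\Hom_{\mathsf{gr}\Lambda}(P,\Lambda_0\langle j\rangle)=\Hom_{\mathsf{gr}\Lambda}(P_{(j)},\Lambda_0\langle j\rangle)$.

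For the implication $(1)\Rightarrow(2)$ I would take \emph{any} linear resolution $P^\bullet\to M$ and apply $\Hom_{\mathsf{gr}\Lambda}(-,\Lambda_0\langle j\rangle)$. Because $P^i$ is generated in degree $i$, the preliminary observation gives $\Hom_{\mathsf{gr}\Lambda}(P^i,\Lambda_0\langle j\rangle)=0$ for $i\neq j$, so the cochain complex computing $\ext_{\mathsf{gr}\Lambda}^\bullet(M,\Lambda_0\langle j\rangle)$ is concentrated in cohomological degree $j$; hence $\ext_{\mathsf{gr}\Lambda}^i(M,\Lambda_0\langle j\rangle)=0$ unless $i=j$.

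For $(2)\Rightarrow(1)$ I would instead use the minimal graded projective resolution $P^\bullet\to M$. Minimality forces every differential of the complex $\Hom_{\mathsf{gr}\Lambda}(P^\bullet,\Lambda_0\langle j\rangle)$ to vanish: a graded map $P^{i-1}\to\Lambda_0\langle j\rangle$ kills $\rad P^{i-1}$, which contains the image of the differential $P^i\to P^{i-1}$. Therefore $\ext_{\mathsf{gr}\Lambda}^i(M,\Lambda_0\langle j\rangle)\cong\Hom_{\mathsf{gr}\Lambda}(P^i,\Lambda_0\langle j\rangle)$, and the hypothesis yields $\Hom_{\mathsf{gr}\Lambda}(P^i,\Lambda_0\langle j\rangle)=0$ for all $j\neq i$. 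By the preliminary observation this says $P^i$ has no generators outside degree $i$, i.e.\ $P^i$ is generated by its degree-$i$ component, so the minimal resolution witnesses that $M$ is linear.

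The routine content is the bookkeeping with graded shifts; the only genuine input is the existence and uniqueness of minimal graded projective resolutions together with the fact that minimality converts $\ext$ into plain $\Hom$ of the resolution's terms into the semisimple modules. The mild subtlety worth flagging is that linearity only requires \emph{some} resolution with the degree-of-generation property, so one direction is handed an arbitrary such resolution while the other must produce the canonical minimal one; both go through because the Hom-into-semisimple computation is insensitive to the choice of resolution in $(1)\Rightarrow(2)$ and pins down exactly the minimal resolution in $(2)\Rightarrow(1)$.
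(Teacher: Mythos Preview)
Your proof is correct. The paper does not prove this proposition directly; it is cited from \cite{BSGkoszul}. However, the paper does reproduce the $(2)\Rightarrow(1)$ argument later, in the proof of Lemma~\ref{koszulmodules}, following the inductive method of \cite{BSGkoszul}: one builds the linear resolution term by term, showing at each step that the syzygy $K=\Ker(P^i\to P^{i-1})$ is concentrated in degrees $\geq i+1$ and then, via the identification $\ext^{i+1}_{\mathsf{gr}\Lambda}(M,N)\cong\Hom_{\mathsf{gr}\Lambda}(K,N)$ for $N$ concentrated in a single degree, that $K$ is generated in degree exactly $i+1$; one then takes $P^{i+1}$ to be its projective cover.

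Your route for $(2)\Rightarrow(1)$ is genuinely different and a bit slicker in this setting: you invoke the minimal graded projective resolution as a single finished object and use that minimality forces every differential in $\Hom_{\mathsf{gr}\Lambda}(P^\bullet,\Lambda_0\langle j\rangle)$ to vanish, so that $\ext^i\cong\Hom_{\mathsf{gr}\Lambda}(P^i,\Lambda_0\langle j\rangle)$ on the nose and the generation degree of every $P^i$ is read off simultaneously rather than step by step. The inductive approach has the advantage of adapting to the situation where the degree-zero part is not semisimple (which is precisely why the paper needs it in Lemma~\ref{koszulmodules}), since it does not rely on the ``differentials into semisimples vanish'' trick; your argument is the cleaner one under the semisimple hypothesis of the proposition as stated.
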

Then $\Lambda$ is Koszul if and only if simple $\Lambda$-modules are linear and similarly, if $\Lambda$ is quasi-hereditary, it is standard Koszul if and only if the module $\Delta$ is linear.

Let $\Lambda=kQ/I$ be the path algebra of the quiver $Q$ with relations given by an homogeneous admissible ideal $I\subseteq kQ_2$, where $Q_i$ denotes the set of paths of length $i$. Let $\mathcal{B}=\bigcup_{i\geq 0}\mathcal{B}_i$ be a basis of $\Lambda$ consisting of paths such that $\mathcal{B}_0=Q_0$, $\mathcal{B}_1=Q_1$ and $\mathcal{B}_i\subseteq Q_i$. Suppose moreover that we can define a total order $<$ on $Q_1$ that we extend to each $\mathcal{B}_i$ lexicographically and then to the union $\mathcal{B}_+=\bigcup_{i> 0}\mathcal{B}_i$, by refining the degree order.

\begin{defin}
The pair $(\B,<)$ is a \emph{PBW basis} for $\Lambda$ if the following hold:
\begin{itemize}
\item if $p$ and $q$ are paths in $\B$ then either $pq$ is in $\B$ or it is a linear combination of elements $r\in\B$ with $r<pq$.
\item a path $\pi=\alpha_1\alpha_2\cdots\alpha_i$ of length $i\geq 3$ is in $\B$ if and only if for each $1\leq j\leq i-1$ the paths $\alpha_1\cdots\alpha_j$ and $\alpha_{j+1}\cdots\alpha_i$ are in $\B$.
\end{itemize}
\end{defin}

The following fact can be found in \cite[Theorem 2.18]{JG}, generalizing Theorem 5.2 in \cite{Priddy}:

\begin{thm}
If $\Lambda$ has a PBW basis then it is Koszul.
\end{thm}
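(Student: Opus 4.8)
The statement to prove is: \emph{If $\Lambda$ has a PBW basis then it is Koszul.}

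The plan is to exhibit, for each simple module, an explicit linear projective resolution built from the PBW basis, and then invoke the characterization that $\Lambda$ is Koszul precisely when every simple is linear. The starting point is the combinatorial structure that a PBW basis provides: the basis $\B$ splits into ``legal'' (or ``non-overlapping'') paths, and every path that is \emph{not} in $\B$ can be rewritten, modulo the relations, as a linear combination of basis paths that are strictly smaller in the refined degree-lexicographic order $<$. This rewriting terminates because $<$ is a well-order on paths of bounded length, which is the engine behind everything that follows.

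First I would set up the resolution. For a vertex $e$ (equivalently its simple $S = S_e$), consider the graded vector spaces $K^i$ spanned by the length-$i$ paths $\pi = \alpha_1 \cdots \alpha_i$ starting at $e$ that are ``overlap-minimal'' in the sense dual to the PBW condition: each $\alpha_j \alpha_{j+1}$ is \emph{not} a basis path of length $2$ (so $\pi$ records $i$ consecutive ``tips'' of relations), and form $P^i = \bigoplus K^i_\pi \otimes e_{t(\pi)}\Lambda$, a projective generated in degree $i$. The differential $P^i \to P^{i-1}$ is the usual one sending the generator indexed by $\pi = \alpha_1 \cdots \alpha_i$ to an alternating sum $\sum_j (-1)^j (\alpha_1 \cdots \widehat{\text{stuff}} \cdots) \otimes (\text{the rewrite of the removed piece})$; concretely one removes a boundary arrow and multiplies by it, using the PBW rewriting to express any resulting non-basis segment back in terms of basis paths. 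One checks $d^2 = 0$ using the second PBW axiom (a path of length $\ge 3$ is in $\B$ iff all its length-$2$ subpaths are), which is exactly what guarantees the internal cancellations.

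Next I would prove exactness of $\cdots \to P^2 \to P^1 \to P^0 \to S \to 0$. This is where the well-order does the real work: one shows that a $k$-basis of $P^i$ is given by pairs $(\pi, q)$ with $\pi$ an overlap path of length $i$ starting at $e$ and $q$ a basis path in $\B$ with $t(\pi) = s(q)$, and that $d(\pi \otimes q)$ has \emph{leading term} (with respect to $<$) equal to $\pi' \otimes (\alpha_i q)$ or to $\pi \otimes (\text{something})$ with a predictable, injective behaviour on leading terms — so that $d$ is injective on a complement of its image in the next degree, giving a contracting homotopy degree by degree. Equivalently, one filters $P^\bullet$ by the order $<$ and observes the associated graded complex is a direct sum of trivial two-term complexes. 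Since each $P^i$ is generated in degree $i$ by construction, the resolution is linear, so $S$ is a linear module, and by the cited Proposition~1.14.2 of \cite{BSGkoszul} this says $\ext^i_{\mathsf{gr}\Lambda}(S,\Lambda_0\langle j\rangle) = 0$ for $i \ne j$; as this holds for every simple summand of $\Lambda_0$, the algebra $\Lambda$ is Koszul by definition.

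The main obstacle is the exactness/minimality bookkeeping in the middle step: making precise the claim that the differential is, after passing to associated graded with respect to $<$, a direct sum of isomorphisms and zero maps, and checking that the PBW axioms (especially the second, ``divisibility'' axiom) are exactly strong enough to force $d^2 = 0$ and to control the leading terms without any overlap ambiguities sneaking in. The quadratic hypothesis $I \subseteq kQ_2$ keeps the relations in degree $2$, which is what lets the ``overlap'' combinatorics be governed solely by length-$2$ subpaths; I would lean on this throughout. Everything else — that the $P^i$ are projective, that the maps are graded of degree $0$ after the shift, that linearity is equivalent to the $\ext$-vanishing — is either immediate or quoted from the stated results, so no separate argument is needed.
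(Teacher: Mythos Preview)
The paper does not give its own proof of this theorem: it is quoted as a known fact, attributed to \cite[Theorem~2.18]{JG} and, ultimately, to Priddy \cite{Priddy}. So there is nothing in the paper to compare your argument against.

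That said, your sketch follows the classical Priddy/Anick line and is essentially the right strategy. The generators of $P^i$ are correctly identified as the length-$i$ paths all of whose length-two subpaths are \emph{not} in $\B_2$ (the ``chains'' or ``tips''), and the exactness argument via filtering by the PBW order and reducing to an associated graded complex that splits into trivial pieces is exactly how these proofs go. One point to tighten: your description of the differential as an ``alternating sum \dots\ removes a boundary arrow and multiplies by it'' is not quite the correct formula for this resolution. In the PBW/Anick setting the differential is defined recursively together with a contracting homotopy (or, equivalently, one identifies $P^\bullet$ with the Koszul complex of the quadratic algebra and uses the bar-type differential there); the naive alternating-sum map you describe will not in general square to zero, because removing an interior arrow does not land you back among the chain generators. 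Once the differential is written down correctly, the leading-term argument you outline does give exactness, and linearity is immediate since each $P^i$ is generated in degree $i$ by construction.
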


We recall now some results about Koszul duality from \cite{BSGkoszul}. First of all we have the following:
\begin{prop}\cite[Corollary 2.3.3]{BSGkoszul}
Any Koszul algebra $\Lambda$ is quadratic, i.e. the ideal of relations is generated in degree two.
\end{prop}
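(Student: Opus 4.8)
The plan is to recover a minimal set of defining relations for $\Lambda$ from the beginning of the minimal graded projective resolution of $\Lambda_0$, and to use Koszulity to force those relations into degree $2$.

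First I would fix a presentation. Since $\Lambda$ is Koszul, the module $\Lambda_0$ is linear by the Proposition quoted just above, so its minimal graded projective resolution
\[ \cdots\to P^2\to P^1\to P^0\to \Lambda_0\to 0 \]
has $P^i$ generated in internal degree $i$ for every $i$. Here $P^0=\Lambda$, and $P^1$ is the projective cover of $\ker(P^0\to\Lambda_0)=\rad\Lambda=\Lambda_{\ge 1}$; because $P^1$ is generated in degree $1$ and the differentials are degree-$0$ maps, $\rad\Lambda$ is generated as a right ideal by $\Lambda_1$, whence $\Lambda_n=\Lambda_1^n$ for all $n$ and $\Lambda$ is generated in degrees $0$ and $1$. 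I then choose a quiver $Q$ whose vertices index the simple summands of $\Lambda_0$ and whose arrows form a basis of $\Lambda_1$ adapted to the vertices, together with a homogeneous admissible ideal $I\subseteq\bigoplus_{i\ge 2}kQ_i$ with $\Lambda\cong kQ/I$.

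Next I would identify, with matching internal grading, the space of minimal relations $R:=I/(I\cdot kQ_{\ge 1}+kQ_{\ge 1}\cdot I)$ with the graded $\Lambda_0$-module $\ext^2_{\mathsf{gr}\Lambda}(\Lambda_0,\Lambda_0)^{*}$. Concretely, $P^1$ is the free graded $\Lambda$-module on the arrows, with $\alpha\in Q_1$ a generator of internal degree $1$ and differential sending $\alpha$ to $\alpha\in\Lambda$ (up to the usual left/right conventions). Writing $\Omega=\ker(P^1\to P^0)$, a direct computation---lift elements of $\Omega$ to $kQ$ and reduce modulo $I$---shows that the top $\Omega/\Omega\cdot\rad\Lambda$ is isomorphic to $R$ as a graded $\Lambda_0$-module. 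Now $P^2\onto\Omega$ is a projective cover and $P^2$ is generated in degree $2$ by Koszulity, so $\Omega$ is generated in degree $2$; hence $R$ is concentrated in degree $2$, i.e. every minimal relation is quadratic, and therefore $I=\langle I\cap kQ_2\rangle$. (Equivalently one may phrase the last step purely Ext-theoretically: $\ext^2_{\mathsf{gr}\Lambda}(\Lambda_0,\Lambda_0\langle j\rangle)=0$ for $j\ne 2$, which is part of the definition of Koszul, forces the same conclusion through the identification above.)

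The step I expect to be the main obstacle is the second one: making rigorous the identification of the top of the second syzygy $\Omega$ with the space of minimal relations while keeping track of the internal grading. This is where one must genuinely write down the first two terms of the minimal resolution (or minimize the bar resolution) rather than argue formally; once it is in place, the conclusion is immediate from the linearity of $\Lambda_0$. If one prefers to sidestep the syzygy bookkeeping, the same argument runs contrapositively: a minimal generator of $I$ of degree $d>2$ lifts to an element $r\in kQ_d\setminus(I\cdot kQ_{\ge 1}+kQ_{\ge 1}\cdot I)$, and one checks directly that $r$ represents a nonzero class in $\ext^2_{\mathsf{gr}\Lambda}(\Lambda_0,\Lambda_0\langle d\rangle)$, contradicting the Ext-vanishing in the definition of a Koszul algebra.
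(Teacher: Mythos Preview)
The paper does not supply its own proof of this proposition; it is simply quoted from \cite{BSGkoszul} as background. Your argument is correct and is in fact the same one given there: linearity of the minimal resolution of $\Lambda_0$ forces $P^1$ to be generated in degree $1$ (so $\Lambda$ is generated by $\Lambda_1$ over $\Lambda_0$) and $P^2$ to be generated in degree $2$, and the identification of the top of the second syzygy with the space of minimal relations then pins the relations in degree $2$. The point you flag as the main obstacle---making the identification $\Omega/\Omega\cdot\rad\Lambda\cong R$ precise as graded $\Lambda_0$-bimodules---is exactly where BGS do a short explicit computation (their Lemma~2.1.2 and the discussion around Theorem~2.3.2), so your instinct about where the work lies is accurate.
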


We denote by $\Lambda^!$ the quadratic dual of $\Lambda$ and $E(\Lambda)=\ext_\Lambda^*(\Lambda_0,\Lambda_0)$ the graded algebra of self-extensions of $\Lambda_0$. 

\begin{thm}\cite[Theorems 2.10.1, 2.10.2]{BSGkoszul}
Let $\Lambda$ be a Koszul algebra, then $E(\Lambda)\cong(\Lambda^!)^{op}$ and $E(E(\Lambda))\cong\Lambda$ canonically.
\end{thm}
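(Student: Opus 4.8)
The statement is the classical Koszul duality of Beilinson--Ginzburg--Soergel, and the plan follows their argument via the Koszul complex. Since $\Lambda$ is Koszul it is quadratic (by the cited Corollary 2.3.3), so we may write $\Lambda = T_S(V)/(R)$ with $S=\Lambda_0$ semisimple, $V=\Lambda_1$ the arrow bimodule over $S$, and $R\subseteq V\otimes_S V$ the space of degree-two relations; then by definition $\Lambda^! = T_S(V^\vee)/(R^\perp)$, where $V^\vee=\Hom_S(V,S)$ and $R^\perp\subseteq V^\vee\otimes_S V^\vee\cong(V\otimes_S V)^\vee$ is the annihilator of $R$. First I would construct the Koszul complex $K_\bullet(\Lambda)$: its $i$-th term is the graded projective left $\Lambda$-module $K_i=\Lambda\otimes_S(\Lambda^!_i)^\vee$, with $(\Lambda^!_i)^\vee$ placed in internal degree $i$, and the differential $d\colon K_i\to K_{i-1}$ is induced by the canonical element of $V\otimes_S V^\vee\cong\End_S(V)$; the identity $d^2=0$ is exactly the statement that $R$ pairs to zero with $R^\perp$. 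The complex comes with an augmentation $K_0=\Lambda\to S$, and the key lemma is that $\Lambda$ is Koszul (in the sense $\ext^i_{\mathsf{gr}\Lambda}(\Lambda_0,\Lambda_0\langle j\rangle)=0$ for $i\neq j$) if and only if $K_\bullet(\Lambda)\to S$ is exact, i.e. a (necessarily minimal, linear) projective resolution of $S$. The direction ``$\Leftarrow$'' is immediate since $\Hom_\Lambda(K_i,S\langle j\rangle)=0$ unless $j=i$; for ``$\Rightarrow$'' the vanishing hypothesis forces the minimal graded resolution $P^\bullet$ of $S$ to be linear, and then an induction on $i$ identifies the degree-$i$ generators of the $i$-th syzygy with $(\Lambda^!_i)^\vee$ and the induced differential with the one above.

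Granting this, I would apply $\Hom_\Lambda(-,S)$ to the minimal resolution $K_\bullet(\Lambda)$; minimality kills all induced differentials, so $E(\Lambda)^i=\ext^i_\Lambda(S,S)\cong\Hom_\Lambda(\Lambda\otimes_S(\Lambda^!_i)^\vee,\,S)\cong\Lambda^!_i$, concentrated in internal degree $i$ — which in particular recovers that $E(\Lambda)$ is generated in degrees $0$ and $1$. Chasing the Yoneda composition through a choice of comparison maps for $K_\bullet(\Lambda)$ then shows the product on $\bigoplus_i\Lambda^!_i$ is that of $\Lambda^!$ with the order of multiplication reversed, yielding a canonical isomorphism of graded algebras $E(\Lambda)\cong(\Lambda^!)^{op}$.

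For the biduality I would first observe that $\Lambda^!$ is again Koszul: the exactness of $K_\bullet(\Lambda)$ is symmetric under the interchange $\Lambda\leftrightarrow\Lambda^!$ (both amount to the acyclicity of the bicomplex attached to the pairing $V\otimes_S V^\vee$), so $K_\bullet(\Lambda^!)$ is exact as well. Applying the previous paragraph to $\Lambda^!$, together with the elementary facts that quadratic duality is an involution, $(\Lambda^!)^!\cong\Lambda$, that it interacts with opposites by $(A^{op})^!\cong(A^!)^{op}$, and that $E(A^{op})\cong E(A)^{op}$, gives
\[
E(E(\Lambda))\;\cong\;E\bigl((\Lambda^!)^{op}\bigr)\;\cong\;E(\Lambda^!)^{op}\;\cong\;\bigl((\Lambda^!)^!\bigr)^{op\,op}\;\cong\;\Lambda .
\]
Finally I would check that this composite is the canonical biduality morphism, namely the natural map $\Lambda\to E(E(\Lambda))$ furnished by the Yoneda pairing, and that every isomorphism built above is natural in $\Lambda$ (no choices are made beyond the canonical element of $\End_S(V)$).

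The main obstacle is the implication ``Koszul $\Rightarrow$ $K_\bullet(\Lambda)$ is a resolution'': concretely, the inductive identification of the minimal linear resolution of $\Lambda_0$ with the quadratic-dual Koszul complex, which requires matching the space of degree-$i$ syzygy generators with $\bigcap_{j}V^{\otimes j}\otimes_S R\otimes_S V^{\otimes(i-2-j)}$, dualizing correctly, and verifying that the resulting differential is dual to multiplication in $\Lambda^!$. The compatibility of the Yoneda product with the multiplication of $\Lambda^!$, and the bookkeeping of the $(-)^{op}$ twists in the biduality, are the remaining points demanding care.
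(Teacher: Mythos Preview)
The paper does not give a proof of this statement at all: it is recorded there purely as a citation of Theorems 2.10.1 and 2.10.2 of \cite{BSGkoszul}, with no argument supplied. Your proposal is therefore not being compared against anything in the paper itself; rather, you have sketched the original Beilinson--Ginzburg--Soergel proof via the Koszul complex, and your outline is accurate in its main points (quadraticity, the equivalence between Koszulity and exactness of $K_\bullet(\Lambda)$, minimality killing the induced differentials to identify $\ext^i$ with $\Lambda^!_i$, and the biduality via $(\Lambda^!)^!\cong\Lambda$). The steps you flag as the delicate ones---the inductive identification of the minimal linear resolution with the Koszul complex, and the matching of the Yoneda product with multiplication in $(\Lambda^!)^{op}$---are indeed where the work lies in BGS, but none of this is carried out or needed in the present paper.
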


This ``duality'' between $\Lambda$ and $E(\Lambda)$ gives raise to an equivalence of triangulated categories as explained in the following Theorem:

\begin{thm}\cite[Theorems 2.12.5, 2.12.6]{BSGkoszul}
There exists an equivalence of triangulated categories
\[ \mathcal{K}:\mathcal{D}^b(\Lambda)\to \mathcal{D}^b(\Lambda^!)\]
between the (graded) bounded derived category of $\Lambda$ and the one of $\Lambda^!$ such that:
\begin{enumerate}[(a)]
\item $\mathcal{K}(M\langle n\rangle)=(\mathcal{K}M)[-n]\langle -n\rangle$ canonically for any $M\in \mathcal{D}^b(\Lambda)$.
\item Let $S_i=e_i\Lambda_0$ be the simple $\Lambda$-module associated to the vertex $i$, $I_i$ its injective envelope and $P_i=e_i\Lambda^!$ the projective cover of the simple $\Lambda^!$-module $e_i\Lambda^!_0=T_i$, then $\mathcal{K}(S_i)=P_i$ and $\mathcal{K}(I_i)=T_i$.
\end{enumerate}
The functor $\mathcal{K}$ is called the ``Koszul duality functor''.
\end{thm}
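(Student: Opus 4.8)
This statement is \cite[Theorems 2.12.5, 2.12.6]{BSGkoszul}; let me recall the shape of its proof, since the functor $\mathcal{K}$ is what will be needed below. Write $A=\Lambda$, $A^!=\Lambda^!$, and let $A_0\cong k^s$ be the semisimple degree-zero part. Everything hinges on the \emph{Koszul bimodule complex} $\mathsf{K}^\bullet$: an $A$--$A^!$-bimodule complex whose term in homological degree $i$ is, up to an internal-degree shift, $A\otimes_{A_0}(A^!_i)^{*}$, with differential the bimodule map induced by the canonical element of $A_1\otimes_{A_0}(A_1)^{*}$. Its key structural properties — which are precisely the content of quadratic/Koszul duality — are: (i) forgetting one side, $\mathsf{K}^\bullet\to A_0$ is a graded projective resolution over $A$ with the degree-$i$ term generated in internal degree $i$ (this is Koszulity of $A$); (ii) forgetting the other side and regrading, it is a projective resolution of $A^!_0$ over $A^!$; (iii) $\mathsf{K}^\bullet\otimes^{\mathbf L}_{A^!}\mathsf{K}^\bullet\simeq A$, and symmetrically, as bimodule complexes. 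Since the algebras we apply this to are quasi-hereditary, hence of finite global dimension, $\mathsf{K}^\bullet$ is bounded and there are no convergence subtleties.

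The functor is then $\mathcal{K}:=\mathbf{R}\Hom_A(\mathsf{K}^\bullet,-):\mathcal{D}^b(\mathsf{gr}A)\to\mathcal{D}^b(\mathsf{gr}A^!)$, the target module structure supplied by the $A^!$-action on $\mathsf{K}^\bullet$. Property (a) is bookkeeping: the degree-$i$ term of $\mathsf{K}^\bullet$ is generated in internal degree $i$ and sits in homological degree $i$, so replacing the argument $M$ by its shift $M\langle n\rangle$ produces the shift $[-n]\langle-n\rangle$ on the image. Property (b) is a direct computation: applying $\Hom_A(\mathsf{K}^\bullet,-)$ to the simple $S_i$ kills every differential (each raises internal degree while $S_i$ is concentrated in its top degree), and what remains assembles into $e_iA^!=P_i$; applying it to $I_i=D(Ae_i)$ collapses, using (i), to the one-dimensional $D(A_0\otimes_A Ae_i)$, i.e.\ $T_i$. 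These are, of course, the same computations that identify $\ext^*_A(A_0,A_0)$ with $(A^!)^{op}$, evaluated at $S_i$ and at $I_i$.

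It remains to see that $\mathcal{K}$ is an equivalence. One route: $\mathcal{K}$ is exact, it carries the generating set $\{S_i\}$ of $\mathcal{D}^b(\mathsf{gr}A)$ to the generating set $\{P_i\}$ of $\mathcal{D}^b(\mathsf{gr}A^!)$, and it induces isomorphisms on graded Hom-spaces, because $\Hom(S_i,S_j[n]\langle m\rangle)=\ext^n_{\mathsf{gr}A}(S_i,S_j\langle m\rangle)$ vanishes unless $n=m$ (Koszulity) and is then the $(i,j)$-component of $E(A)\cong(A^!)^{op}$, which is exactly $\Hom(P_i,P_j[n]\langle m\rangle)$ computed in $\mathsf{gr}A^!$; an exact functor that is fully faithful on a generating set and sends it to a generating set is an equivalence. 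The route of \cite{BSGkoszul}: build the analogous functor $\mathcal{K}'$ from the Koszul complex of $A^!$ — legitimate because $(A^!)^!\cong A$, which follows from the already-quoted $E(E(\Lambda))\cong\Lambda$ — and use (iii) to identify $\mathcal{K}'\mathcal{K}$ and $\mathcal{K}\mathcal{K}'$ with the identity. The only genuinely non-formal input, in either approach, is establishing (i)--(iii) for $\mathsf{K}^\bullet$; this is the quadratic-duality bookkeeping of \cite[\S2]{BSGkoszul}, resting on quadraticity of $\Lambda$ and on the vanishing $\ext^i_{\mathsf{gr}\Lambda}(\Lambda_0,\Lambda_0\langle j\rangle)=0$ for $i\neq j$. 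Everything downstream of that is formal.
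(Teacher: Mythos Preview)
The paper does not give its own proof of this statement: it is quoted as a background result from \cite{BSGkoszul}, and immediately after the statement the paper simply writes ``For the construction of the functor $\mathcal{K}$ we refer to Theorem 2.12.1 of \cite{BSGkoszul}.'' Your sketch is a faithful outline of the argument in \cite{BSGkoszul} --- the Koszul bimodule complex, the bookkeeping for the shift formula, the direct computation on simples and injectives, and the construction of the inverse --- so you have supplied more detail than the paper itself does, and there is nothing further to compare.
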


For the construction of the functor $\mathcal{K}$ we refer to Theorem 2.12.1 of \cite{BSGkoszul}.

In Sections 2.13 and 2.14 of \cite{BSGkoszul} the authors characterized the class of Koszul modules of $\Lambda^!$ (see also the Remark following Theorem 2.12.5 in the same paper). Let \[\mathsf{gr}\Lambda^\uparrow=\{M\in\mathsf{gr}\Lambda \mid M_j=0 \text{ for } j\ll 0 \}\] and \[\mathsf{gr}\Lambda^\downarrow=\{M\in\mathsf{gr}\Lambda \mid M_j=0 \text{ for } j\gg 0 \}\]
be the subcategories of $\mathsf{gr}\Lambda$ consisting of modules whose degree is bounded below and above respectively. 

\begin{prop}\cite[Corollary 2.13.3]{BSGkoszul}\label{koszul modules}
The class of linear modules of $\Lambda^!$ consists precisely of $\mathsf{gr}(\Lambda^!)^\uparrow\cap \mathcal{K}(\mathsf{gr}\Lambda^\downarrow)$.
\end{prop}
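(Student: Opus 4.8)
\textbf{Proof proposal for Proposition \ref{koszul modules}.}

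The statement to prove characterizes the class of linear (Koszul) $\Lambda^!$-modules as $\mathsf{gr}(\Lambda^!)^\uparrow\cap\mathcal{K}(\mathsf{gr}\Lambda^\downarrow)$, where $\mathcal{K}\colon\mathcal{D}^b(\Lambda)\to\mathcal{D}^b(\Lambda^!)$ is the Koszul duality functor. The plan is to prove the two inclusions separately, using the explicit behavior of $\mathcal{K}$ on simples/injectives/projectives together with the characterization of linearity as vanishing of $\ext^i_{\mathsf{gr}\Lambda^!}(M,\Lambda^!_0\langle j\rangle)$ off the diagonal $i=j$. The key observation is that, under $\mathcal{K}$, the standard ``brick'' building linear $\Lambda^!$-modules, namely the simple module $T_i=e_i\Lambda^!_0$ concentrated in degree $0$, is sent to the injective $\Lambda$-module $I_i$ by part (b) of the Koszul duality theorem; and injectives in $\mathsf{gr}\Lambda^\downarrow$ are exactly the building blocks of an injective coresolution. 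Conversely, projectives $P_i=e_i\Lambda^!$ of $\Lambda^!$ arise as $\mathcal{K}(S_i)$.

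First I would prove $\subseteq$. Let $M\in\mathsf{gr}\Lambda^!$ be linear; certainly $M\in\mathsf{gr}(\Lambda^!)^\uparrow$ since $M$ is finitely generated with a linear projective resolution starting in degree $0$ (after a shift), so its degrees are bounded below. It remains to exhibit $N\in\mathsf{gr}\Lambda^\downarrow$ with $\mathcal{K}(N)\cong M$ in $\mathcal{D}^b(\Lambda^!)$. Take the linear projective resolution $\cdots\to P^1\to P^0\to M\to 0$ with $P^i$ a direct sum of copies of $e_j\Lambda^!\langle -i\rangle$ (generated in degree $i$). Applying the quasi-inverse $\mathcal{K}^{-1}$ and using $\mathcal{K}^{-1}(P_j)=S_j$ together with the shift rule (a) — which in reverse reads $\mathcal{K}^{-1}(X[-n]\langle -n\rangle)=(\mathcal{K}^{-1}X)\langle n\rangle$ — each term $e_j\Lambda^!\langle -i\rangle$ in homological degree $i$ corresponds under $\mathcal{K}^{-1}$ to a copy of $S_j\langle -i\rangle$ placed in homological degree $0$ after the twist rule unwinds the shift; assembling these, the complex $\mathcal{K}^{-1}(M)$ is represented by a bounded-above complex of semisimple $\Lambda$-modules concentrated in non-positive degrees, hence its total cohomology $N=H^*(\mathcal{K}^{-1}M)$ lies in $\mathsf{gr}\Lambda^\downarrow$ (degrees bounded above by $0$). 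Since $\mathcal{K}$ is an equivalence, $\mathcal{K}(N)\cong M$, giving the inclusion. (Here one must be slightly careful: $N$ is the object of $\mathcal{D}^b(\Lambda)$ determined by the complex, and membership in $\mathsf{gr}\Lambda^\downarrow$ means it is quasi-isomorphic to a module with degrees bounded above — the linearity of the resolution is exactly what forces this boundedness.)

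For the reverse inclusion $\supseteq$, suppose $M=\mathcal{K}(N)$ with $N\in\mathsf{gr}\Lambda^\downarrow$ and also $M\in\mathsf{gr}(\Lambda^!)^\uparrow$, i.e. $M$ is (quasi-isomorphic to) a genuine graded $\Lambda^!$-module with degrees bounded below. I would compute $\ext^i_{\mathsf{gr}\Lambda^!}(M,T_j\langle n\rangle)$ via the equivalence: by adjunction/fully-faithfulness of $\mathcal{K}$ this equals $\Hom_{\mathcal{D}^b(\Lambda^!)}(M,T_j\langle n\rangle[i])$, and since $T_j=\mathcal{K}(I_j)$ and $\mathcal{K}$ commutes with shifts via rule (a), this is $\Hom_{\mathcal{D}^b(\Lambda)}(N, I_j[?]\langle?\rangle)$ for the appropriately transformed shifts. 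Working out the bookkeeping with rule (a), the off-diagonal vanishing of these $\ext$-groups ($i\neq n$) translates precisely into the statement that $N$, viewed as a complex, has cohomology concentrated in a single degree matched with the internal grading — but this is automatic once we know $N$ is an honest module bounded above in degree and $M$ is bounded below: the two boundedness conditions, imposed through the duality, pin the hypercohomology to the diagonal. Concretely, $\Hom_{\mathcal{D}^b(\Lambda)}(N,I_j\langle m\rangle[t])=\ext^t_{\mathsf{gr}\Lambda}(N,I_j\langle m\rangle)$, which for a module $N$ and an injective $I_j$ vanishes for $t\neq 0$ and is the graded multiplicity space for $t=0$; tracing back, this shows $\ext^i_{\mathsf{gr}\Lambda^!}(M,T_j\langle n\rangle)=0$ for $i\neq n$, i.e. $M$ is linear. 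Invoking the linearity-vs-vanishing Proposition \cite[Proposition 1.14.2]{BSGkoszul} concludes.

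The main obstacle I anticipate is not any single hard idea but the careful management of the grading/homological shifts under $\mathcal{K}$ — rule (a), $\mathcal{K}(M\langle n\rangle)=(\mathcal{K}M)[-n]\langle -n\rangle$, entangles the internal degree with the cohomological degree, so one must verify that ``generated in degree $i$ in homological position $i$'' on the $\Lambda^!$-side corresponds exactly to ``concentrated in a single internal degree, as a module'' on the $\Lambda$-side, and that the boundedness conditions $\uparrow$ and $\downarrow$ are genuinely the ones that make the correspondence an equivalence of these particular subcategories rather than something weaker. A secondary technical point is ensuring that $\mathcal{K}(N)$ for $N$ a bona fide module (not just a complex) is again representable by a module — this is where $M\in\mathsf{gr}(\Lambda^!)^\uparrow$ is needed, and one should cite the relevant part of Sections 2.13–2.14 of \cite{BSGkoszul} (or reprove the needed spectral-sequence/truncation argument) rather than re-deriving it from scratch.
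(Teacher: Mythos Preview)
The paper does not prove this proposition: it is stated purely as a citation from \cite[Corollary 2.13.3]{BSGkoszul}, with no argument given. There is therefore nothing in the paper to compare your proposal against.

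That said, your sketch is broadly in the spirit of the BGS argument. Your $\supseteq$ direction is essentially clean once the shift bookkeeping is done: with $T_j=\mathcal{K}(I_j)$ and the rule $\mathcal{K}(X\langle n\rangle)=\mathcal{K}(X)[-n]\langle -n\rangle$, one gets
\[
\ext^i_{\mathsf{gr}\Lambda^!}(M,T_j\langle n\rangle)\cong\Hom_{\mathcal{D}^b(\Lambda)}(N,I_j\langle -n\rangle[i-n])=\ext^{i-n}_{\mathsf{gr}\Lambda}(N,I_j\langle -n\rangle),
\]
which vanishes for $i\neq n$ since $N$ is a module and $I_j$ is injective. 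Note that you never actually use the boundedness $N\in\mathsf{gr}\Lambda^\downarrow$ or $M\in\mathsf{gr}(\Lambda^!)^\uparrow$ in this computation beyond the bare fact that both are honest modules; the boundedness hypotheses are really there to guarantee the derived equivalence $\mathcal{K}$ is well-defined on the relevant objects (BGS work with not-necessarily-finite-dimensional graded pieces), so you should be explicit about where they enter. In the $\subseteq$ direction your idea is right, but there is a sign slip (with the paper's convention $(M\langle j\rangle)_i=M_{i-j}$, a projective generated in degree $i$ is $P_j\langle i\rangle$, not $P_j\langle -i\rangle$), and the assertion that $\mathcal{K}^{-1}$ of the linear resolution yields an object with cohomology concentrated in a single homological degree needs an actual argument --- this is exactly the content of Lemma 2.14.1 and Proposition 2.14.2 in \cite{BSGkoszul}, which you should cite rather than leave as an ``obstacle''.
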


\subsection{Koszulity of qh-covers}

Note that qh-covers of higher zigzag-algebras are quadratic and we can define an order on the arrows such that $e_x\alpha_i<e_x\alpha_{i+1}$ for $i=1,\ldots,n-1$. Moreover we set $e_x\alpha_i<e_x\alpha_0$ for every $i\neq 0$. 

\begin{prop}
If $Z$ is a higher zigzag-algebra, then its qh-cover $\Gamma$ is a Koszul algebra.
\end{prop}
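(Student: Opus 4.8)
# Proof Proposal

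The plan is to exhibit a PBW basis for $\Gamma$ and then invoke the cited theorem that a PBW basis implies Koszulity. First I would use the explicit presentation $\Gamma = kQ/I$ where $Q = Q^{(n,s+1)}$ (with vertex set $I = I(n,s+1)$, arrows $\alpha_0,\dots,\alpha_n$) and $I$ is generated by the quadratic relations $\alpha_i\alpha_i$, the commutativity relations $\alpha_i\alpha_j - \alpha_j\alpha_i$ (whenever both are nonzero paths), together with the extra quadratic relations $e_z\alpha_0\alpha_1$ for $z \in K$. Since all these generators live in degree two, $\Gamma$ is quadratic, and the order on arrows already fixed in the paragraph preceding the statement ($e_x\alpha_i < e_x\alpha_{i+1}$ for $1 \le i \le n-1$, and $e_x\alpha_i < e_x\alpha_0$ for $i \ne 0$) is the order I would extend lexicographically and then degree-refine to the set of all paths.

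The candidate basis $\B$ is the set of "ordered" nonzero paths: a path is in $\B$ precisely when, reading it from source to target, the labels are weakly increasing in the chosen order on arrows out of each successive vertex, and it is nonzero in $\Gamma$. The key combinatorial point, which follows from the commutativity relations, is that every nonzero path in $\Gamma$ is equal (up to sign, in fact with coefficient $1$) to a unique such ordered representative: given any nonzero path, repeatedly swap an out-of-order adjacent pair $\alpha_i\alpha_j$ with $i > j$ using $\alpha_i\alpha_j = \alpha_j\alpha_i$ (this swap is legal whenever $\alpha_j\alpha_i \ne 0$, and if it were zero the original path would already be zero, except in the special situation hitting the new relation $e_z\alpha_0\alpha_1 = 0$, which just confirms the path is zero and is consistent with the ordering $\alpha_1 < \alpha_0$). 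I then need to verify the two PBW axioms: (1) if $p,q \in \B$ then $pq$ is either in $\B$ or a linear combination of strictly smaller basis elements — here $pq$ is nonzero iff its ordered form is nonzero, and the ordered form is obtained by a sequence of commutations each of which strictly decreases the path in the lexicographic order, so $pq$ is $\le$ its ordered representative with equality iff $pq$ was already ordered; (2) a path of length $\ge 3$ is in $\B$ iff all its length-one-shorter... iff all consecutive sub-paths of length $\ge 2$ (equivalently, all adjacent pairs) are in $\B$ — this is immediate since "weakly increasing" is a condition on consecutive pairs, and nonvanishing of a path in $\Gamma$ is detected by the quadratic relations together with the vertex-set constraints, which again only constrain adjacent pairs (here I use that $\Gamma$, like $Z^+$, is defined by quadratic monomial and commutativity relations, so a reduced path is zero iff it contains a zero sub-path of length two).

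The main obstacle I anticipate is checking axiom (2) carefully, i.e. that nonvanishing of an ordered path in $\Gamma$ really is a purely local (degree-two) condition. For the zigzag algebra $Z^+$ itself this is essentially the content of \cite{JG}, Theorem 2.18, where a PBW basis is constructed; the new feature for $\Gamma$ is the extra relations $e_z\alpha_0\alpha_1$. I would handle this by observing that $\Gamma$ is a quotient of $Z^+$ by a monomial ideal generated in degree two, and that an ordered path $\pi$ (ordered for the order above, so in particular with the unique $\alpha_0$, if present, occurring last) is zero in $\Gamma$ iff it is zero in $Z^+$ or it ends in a forbidden $\alpha_0\alpha_1$; since being ordered forces $\alpha_0$ to be the final arrow, an ordered path can contain the sub-path $\alpha_0\alpha_1$ only impossibly (that would need $\alpha_1$ after $\alpha_0$), so in fact ordered paths through vertices of $K$ simply cannot hit this relation except trivially — which makes the verification clean. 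Finally, once $(\B,<)$ is confirmed to be a PBW basis, Koszulity of $\Gamma$ follows immediately from the theorem quoted from \cite{JG} (generalizing \cite{Priddy}). I would close by remarking that the same argument, applied to $K = \emptyset$, recovers the known Koszulity of the higher zigzag-algebras $Z$ themselves.
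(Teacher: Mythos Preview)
Your proposal is correct and follows essentially the same approach as the paper: both construct a PBW basis for $\Gamma$ with respect to the arrow order $\alpha_1<\alpha_2<\cdots<\alpha_n<\alpha_0$ and then invoke the cited theorem. The paper's proof is terser---it isolates the single combinatorial check that for $i<j$ in this order, $e_x\alpha_j\alpha_i\neq 0$ forces $e_x\alpha_i\alpha_j$ to exist (the only obstruction being the case $(i,j)=(1,0)$ with $x\in K$, where $e_x\alpha_0\alpha_1=0$ by the extra relation)---and then appeals to the PBW machinery of \cite{JG} for the rest; you unpack the two PBW axioms more explicitly, but the substance is the same.

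One small correction: your closing remark that ``the same argument, applied to $K=\emptyset$, recovers the known Koszulity of the higher zigzag-algebras $Z$ themselves'' is not right. With the order $\alpha_1<\cdots<\alpha_n<\alpha_0$, the path $e_z\alpha_0\alpha_1$ (for $z$ with $z_0=0$) is out of order yet cannot be swapped, since $e_z\alpha_1$ does not exist; in $Z$ or $Z^+$ this path is nonzero, so the increasing paths fail to span. The PBW basis for $Z$ in \cite{JG} uses the natural order $\alpha_0<\alpha_1<\cdots<\alpha_n$ instead. It is precisely the extra relations $e_z\alpha_0\alpha_1=0$ that make the shifted order work for $\Gamma$, so the argument does not specialise back to $Z$. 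Also, ``weakly increasing'' should read ``strictly increasing'' (harmless, since $\alpha_i\alpha_i=0$).
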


\begin{proof}
We want to show that $\Gamma$ has a PBW basis. Since we already have an order on the arrows, we need to show that we can extend this order lexicographically to paths of any length. Remember that if $Z=Z^n_s$, then $\Gamma$ is a quotient of $Z^n_{s+1}$, so we can label the vertices of the underlying quiver by $x=(x_0,x_1,\ldots,x_{n+1})$ where $\sum_ix_i=s$. If we want to extend our order we have to prove that, for every $i<j$, if $e_x\alpha_j\alpha_i\neq 0$ then $e_x\alpha_j\alpha_i=e_x\alpha_i\alpha_j$. If this is the case then
\[ \B=\{ e_x\alpha_{i_1}\cdots\alpha_{i_s} \mid x\in Q_0^{(n+1,s)}, {i_1}<{i_2},\cdots<{i_s} \} \]  
is a PBW basis for $\Gamma$.

Now suppose we have $i<j$ and $e_x\alpha_j\alpha_i\neq 0$:

\[ \xymatrix@=1.5em{ x=(\ldots,x_{i-1},x_{i},\ldots,x_{j-1},x_{j},\ldots) \ar[d]^{\alpha_j} \\ (\ldots,x_{i-1},x_{i},\ldots,x_{j-1}-1,x_{j}+1,\ldots) \ar[d]^{\alpha_i} \\
(\ldots,x_{i-1}-1,x_{i}+1,\ldots,x_{j-1}-1,x_{j}+1,\ldots)
} \]
It is clear that the composition $e_x\alpha_i\alpha_j$ always exists unless $j=n+1$, $i=0$ and $x_i=x_0=0$. But in this last situation we have that $x\in K$ and $e_x\alpha_j\alpha_i=e_x\alpha_0\alpha_1=0$ in $\Gamma$. \end{proof}

It is known that standard Koszul algebras are Koszul in the classical sense; this follows from a characterisation of standard Koszul algebras that is Theorem 1.4 in \cite{ADLquasi}. 

\begin{thm}
The quasi-hereditary algebra $\Gamma$ is standard Koszul.
\end{thm}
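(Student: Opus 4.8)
The plan is to reduce the statement to the linearity of the standard module $\Delta=\bigoplus_{x\in I}\Delta_x$ and then to obtain a linear projective resolution of $\Delta$ by inducing one up along the strong exact Borel subalgebra $B\subseteq\Gamma$ constructed in the previous subsection. By \cite[Proposition~1.14.2]{BSGkoszul}, $\ext^i_{\mathsf{gr}\Gamma}(\Delta,\Gamma_0\langle j\rangle)=0$ for all $i\neq j$ exactly when $\Delta$ is a linear module, so, since each $\Delta_x$ is generated in degree $0$ for the path-length grading (by Lemma~\ref{stand modules}), it suffices to produce for every $x\in I$ a graded projective resolution $\cdots\to P^1\to P^0\to\Delta_x\to 0$ with $P^i$ generated in degree $i$.

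The first step is to note that the exact Borel subalgebra $B$ is itself Koszul. Recall that $B$ is the path algebra of the quiver $Q'$ obtained from the quiver of $\Gamma$ by deleting every arrow $\alpha_0$, modulo the square-zero and commutativity relations, and that it inherits the path-length grading of $\Gamma$ as a graded subalgebra with $B_0=\Gamma_0$. Ordering the arrows of $Q'$ by $\alpha_1<\alpha_2<\cdots<\alpha_n$, the argument used in the Koszulity proof of $\Gamma$ applies (indeed more easily, since the exceptional arrow $\alpha_0$ is absent): for $i<j$, whenever $e_x\alpha_j\alpha_i\neq 0$ the composition $e_x\alpha_i\alpha_j$ exists and equals it, so the paths with strictly increasing labels form a PBW basis of $B$; hence $B$ is Koszul. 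Consequently every simple $B$-module $S_x^{B}$ has a linear graded projective resolution $\cdots\to Q^1_x\to Q^0_x\to S_x^{B}\to 0$ in which $Q^i_x$ is a direct sum of shifts $P_y^{B}\langle i\rangle$ of indecomposable projective $B$-modules. (One may also deduce Koszulity of $B$ from the fact that it is the quadratic dual of the type $A$ higher Auslander algebra $\Lambda^{(n,s+1)}$.)

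The second step is to apply the induction functor $\Gamma\otimes_B-$ attached to the Borel subalgebra. Being an exact Borel subalgebra, $B$ makes $\Gamma\otimes_B-$ an exact functor which carries indecomposable projective $B$-modules to indecomposable projective $\Gamma$-modules with the same index, and simple $B$-modules to standard $\Gamma$-modules with the same index; in particular $\Gamma\otimes_B S_x^{B}\cong\Delta_x$. Since $B$ is a \emph{strong} Borel subalgebra we have $B_0=\Gamma_0$, so this functor is compatible with the gradings and takes a graded projective generated in degree $i$ to one generated in degree $i$; thus $\Gamma\otimes_B Q^i_x$ is a direct sum of shifts $P_y^{\Gamma}\langle i\rangle$. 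Applying $\Gamma\otimes_B-$ to the linear $B$-resolution of $S_x^{B}$ therefore yields an exact complex
\[ \cdots\to\Gamma\otimes_B Q^1_x\to\Gamma\otimes_B Q^0_x\to\Delta_x\to 0 \]
of graded $\Gamma$-modules, which is a projective resolution of $\Delta_x$ whose $i$-th term is generated in degree $i$. Hence every $\Delta_x$, and so $\Delta$, is linear, and $\Gamma$ is standard Koszul.

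The step deserving the most care is the interaction between the induction functor and the gradings: one must check that $B$ really is a graded subalgebra of $\Gamma$ with $B_0=\Gamma_0$ — this is exactly where strongness of the Borel subalgebra enters — and that, consequently, $\Gamma\otimes_B(P_y^{B}\langle i\rangle)\cong P_y^{\Gamma}\langle i\rangle$, so that the degree in which each projective is generated is preserved on the nose. The Koszulity of $B$ is the other ingredient, but it is a routine PBW computation parallel to the one already carried out for $\Gamma$.
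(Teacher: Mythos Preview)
Your proof is correct and takes a genuinely different route from the paper's own argument. The paper proceeds via Koszul duality: starting from the short exact sequence $0\to S_y\langle 1\rangle\to\Delta_x\to S_x\to 0$, it applies the inverse Koszul functor $\mathcal{K}^{-1}$ and reduces the linearity of $\Delta_x$ to showing that a certain map $I_x\to I_y\langle -1\rangle$ of injective $\Gamma^!$-modules is surjective, equivalently that the irreducible map $e_yB\xto{\alpha_0\cdot}e_xB$ between projectives over $B=(\Gamma^!)^{\mathrm{op}}$ is injective. Establishing that injectivity then requires a rather involved combinatorial analysis of the projectives of $(\Gamma^!)^{\mathrm{op}}$ via an auxiliary ``unbounded'' quiver $\widehat{Q}$ in the style of Herschend--Iyama, leading to three lemmas on maximal paths, simplicity of socles, and behaviour of irreducible maps. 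Your approach bypasses the quadratic dual entirely: you use the strong exact Borel subalgebra already constructed in the paper (note this is a \emph{different} algebra from the $B=(\Gamma^!)^{\mathrm{op}}$ appearing in the paper's proof), observe that it is Koszul by the very same PBW argument used for $\Gamma$, and then push the linear resolutions of its simples through the exact, graded induction functor to obtain linear resolutions of the $\Delta_x$. This is considerably shorter and more conceptual, and it isolates a general principle---that standard Koszulness follows whenever a graded quasi-hereditary algebra admits a Koszul strong exact Borel subalgebra. What the paper's longer route buys is structural information about $(\Gamma^!)^{\mathrm{op}}$ (simple socles of projectives, monicity of irreducible maps) that is of independent interest and not visible from your argument.
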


\proof
Every standard module $\Delta_x$ is either simple or the extension of two simple modules
\[ 0\to S_y\langle 1\rangle \to\Delta_x\to S_x\to 0\]
such that there exists an arrow $x\xto{\alpha_0}y$. Let $P^\bullet(x)$ and $P^\bullet(y)$ be linear projective resolutions of $S_x$ and $S_y$ respectively (their existence is provided by the Koszulity of $\Gamma$). Then in $\mathcal{D}^b(\Gamma)$ there is a triangle:
\[ \Delta_x\to P^\bullet(x) \to P^\bullet(S_y\langle 1\rangle)[1]\xto{+} \]

Now consider the Koszul duality functor
\[ \mathcal{K}:\mathcal{D}^b(\Gamma^!)\to \mathcal{D}^b(\Gamma)\]
and denote its quasi-inverse by $\mathcal{K}^{-1}$. Applying $\mathcal{K}^{-1}$ to the previous triangle we obtain a triangle in $\mathcal{D}^b(\Gamma^!)$:
\[ C\to \mathcal{K}^{-1}(P^\bullet(x)) \to \mathcal{K}^{-1}(P^\bullet(S_y\langle 1\rangle)[1])\xto{+} \]
where $\mathcal{K}(C)\cong\Delta_x$, $\mathcal{K}^{-1}(P^\bullet(x))\cong\mathcal{K}^{-1}(S_x)\cong I_x$ and \[\mathcal{K}^{-1}(P^\bullet(S_y\langle 1\rangle)[1])\cong \mathcal{K}^{-1}(S_y\langle 1\rangle[1])\cong \mathcal{K}^{-1}(\mathcal{K}(I_y)\langle 1\rangle[1])\cong \mathcal{K}^{-1}\mathcal{K}(I_y\langle -1\rangle)\cong I_y\langle -1\rangle \]
where $I_x$ and $I_y$ are the injective envelopes of the simple $\Gamma^!$-modules $T_x$ and $T_y$ respectively.

We claim that the map $I_x\xto{f} I_y\langle -1\rangle$ is surjective. From this follows that $C$ is quasi-isomorphic to $\Ker f$ and then $\Delta_x\in \mathsf{gr}\Lambda^\uparrow\cap \mathcal{K}(\mathsf{gr}(\Lambda^!)^\downarrow)$ is a linear module.

The map $I_x\xto{f} I_y\langle -1\rangle$ is surjective if and only if the corresponding dual map between left $\Gamma^!$-modules \[\Gamma^!e_y\langle -1\rangle \to \Gamma^!e_x\] is injective and this map is given by right multiplication by the arrow $x\xto{\alpha_0}y$. If we put $B=(\Gamma^!)^{op}$, we can equivalently show that the map given by left multiplication by $\alpha_0$ between right projective $B$-modules is injective:
\[ e_yB\xto{\alpha_0\cdot}e_xB \]

To prove our claim we need the following really useful construction that we recall from the proof of Theorem 3.5 in \cite{twistCY}. Note that we will modify slightly the ideals of relations to adapt to our quasi-hereditary setting.

First of all note that, since $B=(\Gamma^!)^{op}$, with a little abuse of notation we can describe the quiver $Q$ of $B$ using the same notation as in Definition \ref{defin zigzag}:
\[Q_0= \{x=(x_0,x_1,\cdots,x_{n}) \in\Z_{\ge0}^{n+1} \mid \sum_{i=0}^{n+1} x_i=s\}\]
and 
\[Q_1 = \{x \xto{\alpha_i} x+f_i \mid i \in \{0, \ldots, n\}, x,x+f_i \in Q_0\}\]
where $f_i=(0,\ldots,\overset{i-1}{-1},\overset{i}{1},\ldots,0)$ and $f_{0}=(1,\ldots,-1)$. Remember moreover that we called $K$ the subset of $Q_0$ consisting of vertices $x$ such that $x_0=0$. Then $B=kQ/I$ where $I$ is the ideal generated by the elements \[e_x\alpha_i\alpha_j=\begin{cases} e_x\alpha_j\alpha_i \quad\text{ if }x+f_j\in Q_0 \\ 0\quad \text{ if }x+f_j\notin Q_0\text{ and }(i,j)\neq(0,1) \\ e_x\alpha_i\alpha_j \text{ if }x+f_j\notin Q_0\text{ and }(i,j)=(0,1) \end{cases}\]
for any $x\in Q_0$ such that $x+f_i,x+f_i+f_j\in Q_0$.
The elements $e_z\alpha_0\alpha_1$, for $z\in K$ are non-zero in $B$ (in contrast with the higher preprojective algebras described in \cite{twistCY}) since in $\Gamma$ we have $e_z\alpha_0\alpha_1=0$ and $B=(\Gamma^!)^{op}$.

Define the quiver $\widehat{Q}$ by
\[\widehat{Q}_0= \{x=(x_0,\cdots,x_{n}) \in\Z^{n+1} \mid \sum_{i=0}^{n+1} x_i=s, x_0\ge 0\}\]
\[\widehat{Q}_1 = \{x \xto{\alpha_i} x+f_i \mid i \in \{0, \ldots, n\}, x+f_i\in \widehat{Q}_0\}\]
and let $\widehat{I}$ be the ideal of $k\widehat{Q}$ defined by all the possible commutativity relations $\alpha_i\alpha_j=\alpha_j\alpha_i$. If we set $\widehat{B}=k\widehat{Q}/\widehat{I}$ then we have a surjective morphism of $k$-algebras $\pi:\widehat{B}\to B $ with kernel
\[R=\sum_{z\notin Q_0}\widehat{B}e_z\widehat{B} \]
and the residue classes of paths that are not in $R$ are mapped bijectively to residue classes of paths in $Q$.
For two paths in $\widehat{Q}$ $p,p'$ from $x$ to $y$, we will write $p\equiv p'$ if $p-p'\in \widehat{I}$. 

We define a $\Z^{n+1}$-grading $g$ on $\widehat{B}$ by $(g(\alpha_{i}))_j=\delta_{ij}$. This is a well-defined algebra grading on $\widehat{B}$ since $\widehat{I}$ is generated by homogeneous relations. Let $p$ be a path from $x$ to $y$, then $y-x=\sum_id_if_{i}$, where $d=(d_0,\cdots,d_{n})=g(p)$ is the degree of $p$. In fact we can always write $p\equiv p_{x,d,y}$ where
\[p_{x,d,y}=x\xto{\alpha_0}\cdots\xto{\alpha_0}x+d_0f_0\xto{\alpha_1}\cdots \xto{\alpha_{n-1}}y-d_{n}f_{n}\xto{\alpha_{n}}\cdots \xto{\alpha_{n}}y.\]
hence, in $\widehat{B}$, $p+\widehat{I}$ is determined by its degree $d$ and either $x$ or $y$. Moreover, for each path $p'$ from $x$ to $z$, we have $p\equiv p'q$ if and only if $g(p')_i\leq d_i$ for all $0\leq i\leq n$ (take for example $q=p_{z,d-g(p'),y}$). Hence the residue class $p+\widehat{I}$ is in the ideal $R$ if and only if $p\equiv p'q$ where $p'$ is a path from $x$ to $z$ and $z\notin Q_0$ and this is equivalent to say that there exist an index $j\neq 1$ such that $x_j<d_{j+1}$ where we work modulo $n+1$ on the indices (equivalently $p+\widehat{I}\notin R$ if and only if $x_j\ge d_{j+1}$ for all $j\neq 1$).

\begin{lemma}
Let $P_x=e_xB$ and $\pi\in\soc( P_x)$. Then $\pi$ corresponds to a maximal path starting at $x$ and ending at a vertex in $K$.
\end{lemma}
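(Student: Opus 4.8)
\emph{Proof plan.}
The plan is to exploit the monomial presentation of $B$ set up above. Write $\overline{p}$ for the class in $B=\widehat{B}/R$ of a path $p$ of $\widehat{Q}$, and recall that each path class out of $x$ is of the form $\overline{p_{x,d}}$ for a uniquely determined degree $d=(d_0,\dots,d_n)\in\Z_{\ge 0}^{n+1}$, the non-zero such classes forming a $k$-basis of $P_x=e_xB$. Right multiplication by an arrow respects this basis: $\overline{p_{x,d}}\cdot\alpha_i=\overline{p_{x,d}\alpha_i}$ equals $\overline{p_{x,d+e_i}}$ whenever $\alpha_i$ may be appended to $p_{x,d}$ inside $\widehat{Q}$ (here $e_i$ is the $i$-th unit vector), and is $0$ otherwise; since $d$ is recovered from $d+e_i$, in an equation $\sum_d c_d\,\overline{p_{x,d}}\alpha_i=0$ the surviving terms are pairwise distinct basis vectors, so $c_d=0$ for every $d$ with $\overline{p_{x,d+e_i}}\neq 0$. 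Running this over all $i$ shows that $\soc(P_x)$ is exactly the span of the \emph{maximal} monomials $\overline{p_{x,d}}\neq 0$, i.e. those with $\overline{p_{x,d}\alpha_i}=0$ for every $i\in\{0,\dots,n\}$. It therefore suffices to prove that the target of every maximal monomial lies in $K$.

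So let $\overline{p_{x,d}}\neq 0$ have target $y=x+\sum_i d_if_i$, so that $y_j=x_j+d_j-d_{j+1}$ with indices read modulo $n+1$, and recall that non-vanishing of $\overline{p_{x,d}}$ in $B$ amounts to $p_{x,d}$ being a path of $\widehat{Q}$ together with $x_j\ge d_{j+1}$ for all $j\neq 0$. Now try to append $\alpha_1$: the arrow $\alpha_1$ exists out of $y$ in $\widehat{Q}$ precisely when $(y+f_1)_0=y_0-1\ge 0$, that is when $y_0\ge 1$; and in that case $\overline{p_{x,d}\alpha_1}=\overline{p_{x,d+e_1}}$ is non-zero, because passing from $d$ to $d+e_1$ changes only the degree $d_1$, which occurs in none of the inequalities $x_j\ge d_{j+1}$ $(j\neq 0)$ governing membership in $R$. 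Hence $y_0\ge 1$ forces $\overline{p_{x,d}}$ to be non-maximal, and consequently every maximal monomial has $y_0=0$, i.e. its target lies in $K$. This already proves the lemma.

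To get the stronger statement that $\soc(P_x)$ is one-dimensional (so that a single path is being described), one pins down the maximal monomial. For $i\neq 1$ the arrow $\alpha_i$ can always be appended in $\widehat{Q}$, and the degree $d_i$ it raises occurs in exactly one of the inequalities above, namely $x_{i-1}\ge d_i$; hence $\overline{p_{x,d}\alpha_i}=0$ exactly when $x_{i-1}=d_i$. Imposing maximality together with $y_0=0$ therefore forces $d_0=x_n$, $d_i=x_{i-1}$ for $2\le i\le n$, and then $d_1=x_0+x_n$; this degree satisfies the non-vanishing conditions, so there is a unique maximal monomial, and its target is $y=(0,\,x_0+x_n,\,x_1,\dots,x_{n-1})\in K$. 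Thus $\soc(P_x)$ is spanned by the class of this unique maximal path from $x$ to a vertex of $K$, as asserted.

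The step that needs care is the claim in the second paragraph that enlarging the $\alpha_1$-degree of a non-zero path class never pushes it into $R$: this is exactly where one must use the explicit description of $R\subseteq\widehat{B}$ and the fact that the coordinate singled out in the definition of $\widehat{Q}_0$ is $x_0$, so that a path can "escape'' $Q_0$ only through the coordinates $x_1,\dots,x_n$ and never through a change in $d_1$ alone. Apart from that, the only delicate bookkeeping is keeping the cyclic indices modulo $n+1$ consistent.
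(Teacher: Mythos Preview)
Your argument is correct and follows essentially the same route as the paper: both show that a non-zero path class ending at a vertex $y$ with $y_0>0$ can be prolonged by $\alpha_1$, using that increasing $d_1$ leaves the inequalities $x_j\ge d_{j+1}$ (for $j\neq 0$) governing non-membership in $R$ untouched; you append one $\alpha_1$ at a time while the paper appends $\alpha_1^{y_0}$ at once, and you are more explicit about why $\soc(P_x)$ is spanned by path classes. Your final paragraph goes beyond the lemma and also settles the paper's subsequent lemma (simplicity of $\soc(P_x)$) by directly computing the unique maximal degree $d$, whereas the paper argues that separately via the directedness of the subquiver on $K$.
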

\begin{proof} Note that since $B$ is finite dimensional over $k$, for any $x\in Q_0$ there are (a finite number of) maximal paths in $Q$ starting at $x$, up to relations. We will show that any non-zero path $p$ from $x$ to $z$ with $z\notin K$ can be prolonged to a path ending in $z'\in K$. Let $x=(x_0,\cdots,x_{n})\in I$, $z=(z_0,\cdots,z_{n})\in J$ and $p$ a path in $\widehat{Q}$ from $x$ to $z$ such that $p+\widehat{I}\notin R$. Then $z'=z+z_0f_1\in K$ so, if $q=e_z\alpha_1^{z_0}$ is the path from $z$ to $z'$ given by the arrows $\alpha_1$ and $g(p)=d$, we have $d'=g(pq)=d+(0,z_0,\cdots,0)$. Since $p+\widehat{I}\notin R$ we have $x_j\ge d_{j+1}$ for every $j\neq 1$ and this implies $x_j\ge d_{j+1}'$. So $pq+\widehat{I}\notin R$ is a non-zero path from $x$ to $z'\in K$. \end{proof}

\begin{lemma}
$\soc(P_x)$ is simple for every $x\in I$.
\end{lemma}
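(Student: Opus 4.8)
The plan is to reduce the statement to the uniqueness of a maximal path. By the previous lemma, $\soc(P_x)$ (with $P_x=e_xB$) is spanned by the residue classes of the maximal paths starting at $x$ and ending in $K$; since $P_x$ is a nonzero projective its socle is nonzero, so it suffices to show that, modulo the relations of $B$, there is exactly one maximal path starting at $x$. As a nonzero residue class of a path in $\widehat B$ is determined by its source together with its degree $d=g(p)\in\Z_{\ge 0}^{n+1}$, this in turn amounts to producing a unique maximal admissible degree.

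First I would read off, from the presentation $B\cong(k\widehat Q/\widehat I)\big/R$, exactly which degrees are admissible, i.e.\ occur as $g(p)$ for a nonzero $p\in e_xB$: writing $x=(x_0,\dots,x_n)$, these are cut out by the inequalities coming from $p+\widehat I\notin R$ together with the inequality that forces the path of degree $d$ from $x$ to genuinely exist inside $\widehat Q$. The only subtlety here is that extending a path on the right by $\alpha_0$ or by any $\alpha_i$ with $i\ge 2$ never leaves $\widehat Q_0$, while $\alpha_1$ is the unique arrow that can, so that the coordinate $d_1$ of the degree is controlled by this last ``existence'' inequality rather than by the $R$-condition. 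Then I would translate maximality of $p$ into these inequalities by a short case analysis on right multiplication by each $\alpha_i$: one finds that $p\alpha_0=0$ in $B$ forces $d_0$ to its maximum, $p\alpha_i=0$ forces $d_i$ to its maximum for $2\le i\le n$, and $p\alpha_1=0$ forces $d_1$ to its maximum. Hence $p$ is maximal precisely when every coordinate of $d$ is maximal, which pins down a single degree, namely $d=(x_n,\ x_0+x_n,\ x_1,\ x_2,\ \dots,\ x_{n-1})$; one checks immediately that this $d$ is admissible and that the endpoint $x+\sum_i d_if_i$ has $0$th coordinate $x_0+d_0-d_1=0$, so it lies in $K$ as it must. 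Therefore $\soc(P_x)$ is spanned by a single path class, hence is one-dimensional, hence simple.

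The main obstacle is the bookkeeping in these two steps: correctly extracting from the description of $R$ and of $k\widehat Q/\widehat I$ both the admissibility inequalities for $d$ and the effect of right multiplication by each arrow, and in particular handling the asymmetric behaviour of $\alpha_1$ near $K$ (where $e_z\alpha_0\alpha_1\neq 0$ although no path ending at $z\in K$ can be prolonged on the right by $\alpha_1$). Once the inequalities are in place, the existence and uniqueness of the maximal degree is immediate, and with it the simplicity of the socle.
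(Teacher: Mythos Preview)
Your argument is correct and takes a genuinely different route from the paper's. The paper argues by confluence: given two nonzero paths $p,p'$ from $x$ (assumed to end in $K$), it uses the directedness of the full subquiver on $K$ to find a common target $z''\in K$ and extensions $q,q'$ within $K$, checks that $pq$ and $p'q'$ remain outside $R$, and then concludes that $pq\equiv p'q'$ because they share source and target. From this it deduces $\dim_k\soc(P_x)=1$.

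By contrast, you bypass the confluence step entirely and compute the unique maximal admissible degree $d=(x_n,\,x_0+x_n,\,x_1,\dots,x_{n-1})$ directly from the inequalities $x_j\ge d_{j+1}$ (for $j\neq 0$) coming from $R$ together with the $\widehat Q$-existence constraint $d_1\le x_0+d_0$, and then read off that the corresponding path ends in $K$. Your approach has two advantages: it yields an explicit description of $\soc(P_x)$ (its degree and endpoint), which is exactly what is needed in the next lemma when one must verify that left multiplication by $\alpha_0$ does not kill the socle; and it sidesteps a delicate point in the paper's argument, namely that two paths in $\widehat B$ with the same source and target have degrees differing a priori by a multiple of $(1,\dots,1)$, so ``same endpoints'' does not by itself force $pq\equiv p'q'$. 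The paper's approach, on the other hand, is more structural and would generalise more readily to settings where the explicit bound on each $d_i$ is less transparent.
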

\begin{proof} Let $p$ and $p'$ be two paths in $\widehat{Q}$ from $x$ to $z$ and $z'$ respectively, such that $p+\widehat{I},p'+\widehat{I}\notin R$. By the previous lemma we can suppose $z,z'\in K$. Let $g(p)=c$ and $g(p')=d$ so that we can write $z=x+\sum_ic_if_i$ and $z'=x+\sum_id_if_i$. Since the full subquiver of $Q$ that has $K$ as set of vertices is directed, there exists a vertex $z''\in K$ and two paths $q,q'$ in $K$ (hence not involving arrows $\alpha_0$ and $\alpha_1$) from $z$ and $z'$ respectively to $z''$. Since $q$ and $q'$ are paths in $K$ we have that $pq+\widehat{I},p'q'+\widehat{I}\notin R$ because $z_i\geq g(q)_{i+1}$ and $z'_i\geq g(q')_{i+1}$ for any $i\neq 0$. Hence they must coincide (up to equivalence) since $z''-x=\sum_ig(pq)_if_i=\sum_ig(p'q')_if_i$. Hence $dim_k\soc(P_x)=1$ and $\soc(P_x)$ is simple. \end{proof}

% there exists a path $q=p_{z,g(q),z'}\in\widehat{Q}$ from $z$ to $z'$ not involving any arrow $\alpha_0$ or $\alpha_1$. But then we have $z'-z=\sum_ig(q)_if_i=\sum_i(d'_i-d_i)f_i$ that implies $d_i'-d_i=g(q)_i\ge 0$ and this means exactly that $p'=pq$.

\begin{lemma}
Let $P_{x'}\xto{\alpha_0\cdot}P_x$ be the irreducible morphism between indecomposable projective $B$-modules given by left multiplication by $x\xto{\alpha_0}x'$. Then $\alpha_0\cdot(\soc(P_{x'}))\neq 0$.
\end{lemma}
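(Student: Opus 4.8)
The plan is to analyze the socle of $P_{x'}$ explicitly using the combinatorial description of $\widehat{B}$ developed above, and to show that left multiplication by $\alpha_0$ does not kill a socle element. First I would recall, from the two preceding lemmas, that $\soc(P_{x'})$ is simple and is spanned by the class of a maximal path $p$ from $x'$ to some vertex $w \in K$; let $d = g(p)$ be its degree, so that $w = x' + \sum_i d_i f_i$ and, by the criterion established before the lemmas, $p + \widehat{I} \notin R$, i.e.\ $x'_j \ge d_{j+1}$ for all $j \ne 1$ (indices mod $n+1$). Since $x \xto{\alpha_0} x'$ we have $x' = x + f_0$, so $x'_0 = x_0 + 1$ and $x'_n = x_n - 1$; in particular $x_n \ge 1$.

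The key step is to produce, starting from $x$, a maximal non-zero path in $B$ that factors through $\alpha_0$ followed by (the relevant reordering of) $p$. Concretely, consider the path $q = e_x \alpha_0$ from $x$ to $x'$, and form $qp$, which is a path from $x$ to $w$ of degree $g(qp) = d + f_0^{\,g} = d + (1,0,\ldots,0)$ (here I use that $g(\alpha_0) = (1,0,\ldots,0)$; the target indeed shifts by $f_0$). I would check that $qp + \widehat{I} \notin R$: the criterion requires $x_j \ge (d + (1,0,\ldots,0))_{j+1}$ for all $j \ne 1$. For $j \ne n$ the degree component $(d + (1,0,\ldots,0))_{j+1} = d_{j+1}$, and since $x_j = x'_j$ for $j \notin \{0,n\}$ (and for $j=0$ we have $x_0 = x'_0 - 1$, but the constraint at $j=0$ involves $d_1$, which is unconstrained anyway because $j+1 = 1$ is the excluded index — wait, one must be careful: the excluded index is $j = 1$, so $j=0$ is \emph{not} excluded and the relevant component is $d_1$; here is where $(i,j) \ne (0,1)$ subtlety of the relations enters). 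The delicate bookkeeping is exactly this: I must verify the inequality $x_j \ge (g(qp))_{j+1}$ survives precomposition by $\alpha_0$ at every index $j \ne 1$, using $x'_j \ge d_{j+1}$ together with $x_j = x'_j$ for $j \ne 0, n$, the shift $x_n = x'_n + 1 \ge d_{n+1} + 1 \ge (g(qp))_{n+1}$ since $(g(qp))_{n+1} = (g(qp))_0 = d_0 + 1$...

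So the genuinely careful point is to match up the grading shift caused by prepending $\alpha_0$ with the shifts $x \to x'$ in the vertex coordinates, and confirm that the ``$\notin R$'' condition is preserved; this shows $qp + \widehat{I} \ne 0$ in $\widehat B$, hence its image in $B$ is nonzero. Since $p$ spans $\soc(P_{x'})$ and $qp$ represents $\alpha_0 \cdot p \in \alpha_0 \cdot \soc(P_{x'})$, we conclude $\alpha_0 \cdot (\soc(P_{x'})) \ne 0$. I expect the main obstacle to be precisely the index-chasing in the last paragraph — keeping track of the cyclic indexing modulo $n+1$, the special role of the excluded index $1$, and the interplay between the coordinate shift $x \mapsto x' = x + f_0$ and the grading shift $g(p) \mapsto g(qp) = g(p) + (1,0,\ldots,0)$ — since a sign or off-by-one slip there would invalidate the ``$\notin R$'' check. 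Everything else (simplicity of the socle, the form of maximal paths, the $R$-criterion) is supplied by the preceding lemmas and the construction of $\widehat B$.
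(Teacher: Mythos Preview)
Your approach is exactly the paper's: take a path $p$ generating $\soc(P_{x'})$, prepend $\alpha_0$, and verify the ``$\notin R$'' degree criterion for $\alpha_0 p$ starting at $x$. Your hesitation at $j=0$ stems from what appears to be a typo in the paper's statement of the criterion: it should read $j\neq 0$ rather than $j\neq 1$, since the constraint $z_0\ge 0$ is already built into $\widehat{Q}_0$ and hence no intermediate vertex can ever have negative $0$-th coordinate (the paper's own proofs of both this lemma and the preceding one silently skip $j=0$, confirming this reading). With the corrected criterion you only need to check $j\neq 0$: for $j\notin\{0,n\}$ you have $x_j=x'_j\ge d_{j+1}=(g(\alpha_0 p))_{j+1}$, and for $j=n$ you have $x_n=x'_n+1\ge d_0+1=(g(\alpha_0 p))_0$, exactly as the paper does.
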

\begin{proof} Let $p'$ be a path in $\widehat{Q}$ from $x'$ to $z$ such that $\pi(p'+\widehat{I})=p'+I$ generates $\soc(P_{x'})$. We want to show that $\alpha_0p'+\widehat{I}\notin R$. If $g(p')=d'$, then $g(\alpha_0p')=d'+(1,0,\cdots,0)=(d'_0+1,d_1',\cdots,d'_{n})$. Since $x'=x+f_0$, we have $x_i=x'_i$ for every $i\neq 0,n$; moreover $x'_{n}=x_{n}-1$ therefore $x_{n}=x'_{n}+1\ge d_{0}'+1$ and we conclude that $\alpha_0p'+\widehat{I}\notin R$.\end{proof}

As a consequence of the last lemma we have that any irreducible morphism between indecomposable projective $B$-modules is a monomorphism and the proof of the proposition is complete. \qed

\section{\texorpdfstring{$\Delta$}{Delta}-Koszulity}
In this section we want to investigate another kind of Koszul property. We start recalling the basic definitions and results by \cite{Madsen1}, \cite{Madsen2}.

\subsection{General results}

In what follows we will consider a grading that is different from the radical grading. To avoid confusion in the notation, given a graded algebra $\Lambda$, we will denote its graded subspaces by $\Lambda_{[i]}$ whenever the grading is not the radical grading. Later on (Section \ref{sec delta kosz}) this grading will coincide with the $\langle\cdot\rangle^\flat$-grading.

\begin{defin}\cite{Madsen1} \label{deltakoszul}
Let $\Lambda$ be a graded algebra such that $\gldim\Lambda_{[0]}<\infty$ and let $T$ be a graded $\Lambda$-module concentrated in degree zero. Then we say that $\Lambda$ is \emph{Koszul with respect to T} or \emph{T-Koszul} if:
\begin{enumerate}
\item $T$ is a tilting $\Lambda_{[0]}$-module.
\item $T$ is graded self-orthogonal as a $\Lambda$-module, that is 
\[ \ext_{\mathsf{gr}\Lambda}^i(T,T\langle j\rangle)=0,  \mbox{ whenever } i\neq j.\]
\end{enumerate}
\end{defin}

We recall the following results about graded self-orthogonal modules:
\begin{lemma} \cite[Proposition 3.1.2, Corollary 3.1.3] {Madsen1}\label{graded self orth}
Let $\Lambda$ be a graded $k$-algebra with degree zero part not necessarily semisimple and $T$ a graded self-orthogonal module. Then
\[ \ext^i_\Lambda(T,T)\cong \ext^i_{\mathsf{gr}\Lambda}(T,T\langle i \rangle)\]
for each $i\ge 0$. Moreover there is an isomorphism of graded algebras 
\[\bigoplus_{i\ge 0}\ext^i_{\Lambda}(T,T)\cong\bigoplus_{i\ge 0}\ext^i_{\mathsf{gr}\Lambda}(T,T\langle i\rangle). \] 
\end{lemma}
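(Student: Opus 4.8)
The statement to prove is Lemma~\ref{graded self orth} (Madsen's Proposition 3.1.2 / Corollary 3.1.3): for a graded self-orthogonal module $T$ over a graded algebra $\Lambda$ (with $\Lambda_{[0]}$ not necessarily semisimple), one has $\ext^i_\Lambda(T,T)\cong\ext^i_{\mathsf{gr}\Lambda}(T,T\langle i\rangle)$ for all $i\ge 0$, and moreover these isomorphisms assemble into an isomorphism of graded algebras $\bigoplus_{i\ge 0}\ext^i_\Lambda(T,T)\cong\bigoplus_{i\ge0}\ext^i_{\mathsf{gr}\Lambda}(T,T\langle i\rangle)$. The plan is to pass to a graded projective resolution of $T$ and exploit the bookkeeping that the graded shift provides, together with the vanishing hypothesis $\ext^i_{\mathsf{gr}\Lambda}(T,T\langle j\rangle)=0$ for $i\ne j$.

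First I would fix a graded projective resolution $P^\bullet\to T$ in $\mathsf{gr}\Lambda$. Applying $\Hom_{\mathsf{gr}\Lambda}(-,T\langle j\rangle)$ and taking cohomology computes $\ext^i_{\mathsf{gr}\Lambda}(T,T\langle j\rangle)$; summing over all internal degrees $j$ and using the convention $\Hom_\Lambda(M,N)=\bigoplus_{j}\Hom_{\mathsf{gr}\Lambda}(M,N\langle j\rangle)$ recorded in the introduction, the same resolution computes $\ext^i_\Lambda(T,T)$ as an ungraded Ext group but now carrying an \emph{internal grading} by $j$. So for each $i$ there is a canonical direct sum decomposition $\ext^i_\Lambda(T,T)=\bigoplus_{j}\ext^i_{\mathsf{gr}\Lambda}(T,T\langle j\rangle)$. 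The self-orthogonality hypothesis says every summand with $j\ne i$ vanishes, leaving precisely $\ext^i_\Lambda(T,T)\cong\ext^i_{\mathsf{gr}\Lambda}(T,T\langle i\rangle)$; this is the first assertion.

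For the algebra isomorphism I would check that this degree-by-degree identification is compatible with the Yoneda product. The Yoneda product $\ext^i_\Lambda(T,T)\otimes\ext^{i'}_\Lambda(T,T)\to\ext^{i+i'}_\Lambda(T,T)$ respects the internal grading (composition of a map of internal degree $i$ with one of internal degree $i'$ has internal degree $i+i'$), and under the decomposition above it restricts, on the surviving summands, to the graded Yoneda product $\ext^i_{\mathsf{gr}\Lambda}(T,T\langle i\rangle)\otimes\ext^{i'}_{\mathsf{gr}\Lambda}(T,T\langle i'\rangle)\to\ext^{i+i'}_{\mathsf{gr}\Lambda}(T,T\langle i+i'\rangle)$, where one uses $\ext^{i'}_{\mathsf{gr}\Lambda}(T,T\langle i'\rangle)\cong\ext^{i'}_{\mathsf{gr}\Lambda}(T\langle i\rangle,T\langle i+i'\rangle)$ via the shift functor. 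Hence $\Phi=\bigoplus_i\Phi_i$ with $\Phi_i$ the first isomorphism is multiplicative, giving the graded algebra isomorphism. Both sides are algebras under Yoneda product, and one must also note the $\Z$-grading on the left is by cohomological degree while on the right it is by the pair (cohomological degree = internal degree), which agree after the identification.

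The main obstacle is not conceptual but a matter of being careful with two bookkeeping points: (i) justifying cleanly that a single graded projective resolution simultaneously computes the ungraded Ext as the direct sum of all graded Ext groups with all internal shifts — this rests on $P^i$ being finitely generated so that $\Hom_\Lambda(P^i,T)=\bigoplus_j\Hom_{\mathsf{gr}\Lambda}(P^i,T\langle j\rangle)$, and on cohomology commuting with the direct sum; and (ii) tracking the shift functor $\langle\cdot\rangle$ through the Yoneda composition so that the degrees add up correctly, i.e. that the isomorphism $\ext^{i'}_{\mathsf{gr}\Lambda}(T,T\langle i'\rangle)\cong\ext^{i'}_{\mathsf{gr}\Lambda}(T\langle i\rangle,T\langle i+i'\rangle)$ is the one induced by the same resolution shifted by $i$. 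Once these are pinned down the result follows, and for the full details I would simply cite \cite[Proposition 3.1.2, Corollary 3.1.3]{Madsen1}.
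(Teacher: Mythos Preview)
The paper does not actually prove this lemma: it is stated with a citation to \cite[Proposition~3.1.2, Corollary~3.1.3]{Madsen1} and no proof is given. Your argument is the standard one and is correct --- decompose $\ext^i_\Lambda(T,T)$ as $\bigoplus_j \ext^i_{\mathsf{gr}\Lambda}(T,T\langle j\rangle)$ via a graded projective resolution, kill all summands with $j\ne i$ by the self-orthogonality hypothesis, and observe that Yoneda composition respects the internal grading --- so there is nothing further to compare.
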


An analogous of Koszul duality holds for $T$-Koszul algebras:

\begin{thm}\cite[Theorem 4.2.1]{Madsen1}\label{delta dual}
Let $\Lambda$ be a graded $k$-algebra such that $\gldim\Lambda_{[0]}<\infty$ and suppose that $\Lambda$ is $T$-Koszul for a module $T$. Let $\Lambda^\dagger=\ext^*_\Lambda(T,T)$ endowed with the $\ext$-grading, then:
\begin{enumerate}
\item $\gldim \Lambda^\dagger_{[0]}<\infty$ and $\Lambda^\dagger$ is Koszul with respect to $D T_{\Lambda^\dagger}$.
\item There is an isomorphism of graded algebras $\Lambda\simeq\ext^*_{\Lambda^\dagger}(DT,DT)$
\end{enumerate}
\end{thm}
If this is the case we say that the pair $(\Lambda^\dagger,DT)$ is the \emph{Koszul dual} of $(\Lambda,T)$.

When $\Lambda$ is $T$-Koszul there exists a complex of bigraded $\Lambda$-$\Lambda^\dagger$-modules $X$ that defines two functors
\[ F_T=X\otimes_{\mathsf{gr}\Lambda^\dagger}^\mathbb{L}-:\mathcal{D}(\mathsf{gr}\Lambda^\dagger)\rightleftharpoons \mathcal{D}(\mathsf{gr}\Lambda):G_T=\mathbb{R}\Hom_{\mathsf{gr}\Lambda}(X,-) \]
such that $(F_T,G_T)$ is an adjoint pair (as explained in \cite{Madsen1}, Section 3 and \cite{kellerDG}).

In general the two functors above are not quasi-inverses one to the other but they induce an equivalence on certain subcategories. Let $\mathcal{F}_{\mathsf{gr}\Lambda}$ be the full subcategory of $\mathsf{gr}\Lambda$ of modules $M$ having a finite filtration $0=M_0\subseteq M_1\subseteq\cdots\subseteq M_t=M$ with factors that are graded shifts of direct summands of $T$. Let $\mathcal{L}^b(\Lambda^\dagger)$ be the category of bounded linear complexes of graded projective $\Lambda^\dagger$-modules.
\begin{thm}\cite[Theorems 4.3.2, 4.3.4]{Madsen1}\label{equivalences}
The functor $G_T:\mathcal{D}(\mathsf{gr}\Lambda)\to\mathcal{D}(\mathsf{gr}\Lambda^\dagger)$ restricts to an equivalence $G_T:\mathcal{F}_{\mathsf{gr}\Lambda}\to\mathcal{L}^b(\Lambda^\dagger)$. If moreover $\Lambda$ is Artinian, $\Lambda^\dagger$ is Noetherian and $\gldim \Lambda^\dagger<\infty$, then there is an equivalence of triangulated categories $G_T^b:\mathcal{D}^b(\mathsf{gr}\Lambda)\to\mathcal{D}^b(\mathsf{gr}\Lambda^\dagger)$ between the bounded derived categories.
\end{thm}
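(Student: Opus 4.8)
This statement is \cite[Theorems 4.3.2 and 4.3.4]{Madsen1}; here is an outline of how I would run the argument, using only the graded self-orthogonality of $T$ and the formal properties of the adjoint pair $(F_T,G_T)$. The plan is the familiar tilting-theoretic one: pin down the behaviour of $G_T$ on a small class of ``elementary'' objects on each side, check that it identifies them, and then propagate the identification by dévissage along triangles, finally enlarging everything to the bounded derived categories under the finiteness hypotheses. The one genuinely computational point is the first: writing $P^\dagger_i=e_i\Lambda^\dagger$ for the indecomposable projective $\Lambda^\dagger$-modules and $T_i$ for the indecomposable summands of $T$, one reads off from the explicit bigraded bimodule complex $X$, together with Lemma \ref{graded self orth} (which fixes $\Lambda^\dagger$ as a graded algebra and forces $\mathbb{R}\Hom_{\mathsf{gr}\Lambda}(X,T_i\langle j\rangle)$ to have cohomology concentrated in the single degree $j$), that $G_T(T_i\langle j\rangle)$ is a graded shift of $P^\dagger_i$ viewed as a linear stalk complex, that dually $F_T(P^\dagger_i\langle j\rangle)\cong T_i\langle j\rangle$, and that the unit $M\to G_TF_TM$ and counit $F_TG_TC\to C$ are isomorphisms on these objects. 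This is exactly where graded self-orthogonality of $T$ enters in an essential way.

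Granting that, the first assertion is formal. Any $M\in\mathcal{F}_{\mathsf{gr}\Lambda}$ carries a finite filtration whose subquotients are graded shifts of summands of $T$; applying the triangulated functor $G_T$ to the associated triangles exhibits $G_T(M)$ as an iterated cone of shifts of the $P^\dagger_i\langle j\rangle$, hence $G_T(M)\in\mathcal{L}^b(\Lambda^\dagger)$. Conversely, induction on the width of a bounded linear complex of graded projective $\Lambda^\dagger$-modules — peeling off the top or bottom term, with linearity guaranteeing that only the ``allowed'' internal shifts occur — presents every object of $\mathcal{L}^b(\Lambda^\dagger)$ as an iterated cone of the same elementary complexes, so it lies in the essential image of $G_T$. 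Full faithfulness on $\mathcal{F}_{\mathsf{gr}\Lambda}$ then follows from the adjunction, $\Hom(G_TM,G_TN)\cong\Hom(F_TG_TM,N)\cong\Hom(M,N)$, the last isomorphism because $F_TG_T\to\mathrm{id}$ is an isomorphism on elementary objects and both sides are cohomological, so a five-lemma induction along the filtrations of $M$ and then of $N$ closes the gap. This yields the equivalence $G_T\colon\mathcal{F}_{\mathsf{gr}\Lambda}\xrightarrow{\ \sim\ }\mathcal{L}^b(\Lambda^\dagger)$.

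For the derived equivalence I would observe that, when $\Lambda$ is Artinian, $\Lambda^\dagger$ is Noetherian and $\gldim\Lambda^\dagger<\infty$, the two subcategories generate: finite global dimension forces every finitely generated graded $\Lambda^\dagger$-module to admit a finite linear projective resolution, so $\mathcal{L}^b(\Lambda^\dagger)$ generates $\mathcal{D}^b(\mathsf{gr}\Lambda^\dagger)$, while the graded shifts of $\mathcal{F}_{\mathsf{gr}\Lambda}$ generate $\mathcal{D}^b(\mathsf{gr}\Lambda)$ since $T$ is a tilting $\Lambda_{[0]}$-module. One then extends $G_T|_{\mathcal{F}_{\mathsf{gr}\Lambda}}$ to a triangle functor $G_T^b$ on the bounded derived categories — by the dg-enhancement machinery of \cite{kellerDG}, a fully faithful functor on a generating subcategory with the appropriate $\Hom$-vanishing extends uniquely — and checks that it stays fully faithful and essentially surjective on generators, hence is an equivalence. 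The main obstacle throughout is the bookkeeping for the complex $X$ and the verification that the elementary-object identifications go through; once those are secured the rest is standard dévissage, and for the complete argument we refer to \cite[Theorems 4.3.2 and 4.3.4]{Madsen1}.
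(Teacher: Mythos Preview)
The paper does not prove this theorem; it is quoted directly from \cite[Theorems 4.3.2, 4.3.4]{Madsen1} with no accompanying argument, so there is no in-paper proof against which to compare your attempt. Your outline follows the standard tilting-theoretic architecture (identify the images of the elementary objects, propagate by d\'evissage along triangles, then enlarge by a generation argument) that Madsen's own proof uses, and it correctly isolates graded self-orthogonality as the computational input via Lemma~\ref{graded self orth} and Proposition~\ref{delta koszul propert}.

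One genuine slip to flag: in the last paragraph you assert that ``finite global dimension forces every finitely generated graded $\Lambda^\dagger$-module to admit a finite \emph{linear} projective resolution''. That is false in general---linearity of the minimal resolution is a Koszul-type condition, not a consequence of $\gldim\Lambda^\dagger<\infty$. The conclusion you need, namely that $\mathcal{L}^b(\Lambda^\dagger)$ generates $\mathcal{D}^b(\mathsf{gr}\Lambda^\dagger)$ as a triangulated category, is nonetheless correct, but for the simpler reason that each indecomposable projective $P^\dagger_x\langle j\rangle$, viewed as a stalk complex in cohomological degree $j$, already lies in $\mathcal{L}^b(\Lambda^\dagger)$, and under finite global dimension and the Noetherian hypothesis these generate $\mathcal{D}^b(\mathsf{gr}\Lambda^\dagger)$. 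So the overall strategy survives, but that particular justification should be replaced.
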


The following Proposition  gives some useful properties of the adjoint pair $(F_T,G_T)$:
\begin{prop}\cite[Proposition 3.2.1]{Madsen1} \label{delta koszul propert}
Let $T$ be a graded self-orthogonal $\Lambda$-module, $M$ a finitely cogenerated $\Lambda$-module and $N$ an object in $\mathcal{D}(\mathsf{gr}\Lambda)$. Then, for every $i,j\in\mathbb{Z}$, we have:
\begin{enumerate}[(a)]
\item $G_T(T)\cong \Lambda^\dagger$.
\item If $\phi:F_TG_T\to id_{\mathcal{D}(\mathsf{gr}\Lambda)}$ is the counit of the adjunction, then $\phi_T:F_TG_T(T)\to T$ is an isomorphism.
\item There is a functorial isomorphism $G_T(N\langle j\rangle)\cong G_T(N)\langle-j\rangle[-j]$.
\item There is a functorial isomorphism $F_TG_T(N\langle j\rangle)\cong F_TG_T(N)\langle j\rangle$.
\item $(H^iG_T(M))_j\cong \ext_{\mathsf{gr}\Lambda}^{i+j}(T,M\langle j\rangle)$.
\item $G_T(D\Lambda)\cong DT_{\Lambda^\dagger}$.
\end{enumerate}
\end{prop}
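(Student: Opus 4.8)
The plan is to argue directly from the bigraded complex of $\Lambda$-$\Lambda^\dagger$-bimodules $X$ that underlies $F_T$ and $G_T$. Recall that $X$ is built from a minimal graded projective resolution $\cdots\to P^1\to P^0\to T\to 0$ of $T$ over $\Lambda$: one takes the complex $\bigoplus_{i}P^i$ with its homological differential --- a K-projective complex of graded $\Lambda$-modules quasi-isomorphic to $T$ --- and equips it with the right $\Lambda^\dagger$-action induced by the action of $\ext^*_\Lambda(T,T)$ on the resolution. The graded self-orthogonality of $T$ (Lemma~\ref{graded self orth}) is exactly what makes this action strictly associative and homogeneous for the $\ext$-grading, so that $X$ becomes a genuine complex of bigraded $\Lambda$-$\Lambda^\dagger$-bimodules for which both $G_T=\mathbb{R}\Hom_{\mathsf{gr}\Lambda}(X,-)$ and $F_T=X\otimes^{\mathbb{L}}_{\mathsf{gr}\Lambda^\dagger}-$ may be computed without further resolving. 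With this in hand, all six assertions become bookkeeping with the three gradings ($\Lambda$-internal, $\Lambda^\dagger$-internal, homological) carried by $X$, linked by the rule that homological degree $i$ corresponds to $\Lambda^\dagger$-internal degree $i$ and, by linearity of the resolution, to $\Lambda$-internal degree $i$ on generators.

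For (a), since $X$ is a graded-projective resolution of $T$, the complex $G_T(T)=\Hom_{\mathsf{gr}\Lambda}(X,T)$ has cohomology $\bigoplus_i\ext^i_\Lambda(T,T)=\Lambda^\dagger$, and self-orthogonality forces this complex to be concentrated in the expected bidegrees, so that in fact $G_T(T)\cong\Lambda^\dagger$ already on the level of complexes, with the regular right $\Lambda^\dagger$-structure. For (b), granting (a) we obtain $F_TG_T(T)=X\otimes^{\mathbb{L}}_{\mathsf{gr}\Lambda^\dagger}\Lambda^\dagger\cong X\cong T$ in $\mathcal{D}(\mathsf{gr}\Lambda)$, and one checks that under these identifications the counit $\phi_T$ is the canonical quasi-isomorphism $X\to T$; the point requiring care is that the unit and counit are precisely the ones furnished by the standard tensor--Hom adjunction for the bimodule $X$, via the evaluation map, and that the identification in (a) respects the $X$-module structure.

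Items (c)--(f) are then essentially formal. Tracking the grading correspondence on $X$, a shift $\langle j\rangle$ on the $\Lambda$-side inside $\mathbb{R}\Hom_{\mathsf{gr}\Lambda}(X,-)$ becomes the shift $\langle -j\rangle[-j]$ on the $\Lambda^\dagger$-side, which gives (c); applying $F_T$ afterwards cancels the homological shift and restores $\langle j\rangle$, which gives (d); in both cases the isomorphisms are the evident functorial ones. For (e), one computes $H^iG_T(M)=H^i\mathbb{R}\Hom_{\mathsf{gr}\Lambda}(X,M)$ from the resolution, and its internal-degree-$j$ part is exactly $\ext^{i+j}_{\mathsf{gr}\Lambda}(T,M\langle j\rangle)$ once the built-in shift by $j$ is accounted for --- the relevant spectral sequence collapsing because $X$ resolves $T$, with finite cogeneration of $M$ ensuring that each bidegree is finite-dimensional. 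For (f), $\mathbb{R}\Hom_{\mathsf{gr}\Lambda}(-,D\Lambda)\cong D(-)$, the $k$-linear duality, which swaps the sides of the module actions, so $G_T(D\Lambda)=\mathbb{R}\Hom_{\mathsf{gr}\Lambda}(X,D\Lambda)\cong DX\cong DT$ in the derived category, with the $\Lambda^\dagger$-action transported through $X$; unwinding the variances exhibits this as $DT_{\Lambda^\dagger}$.

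I expect the genuine work to lie not in (a)--(f) but in the surrounding infrastructure: producing $X$ as an honest (rigid, not merely homotopy-coherent) complex of bigraded $\Lambda$-$\Lambda^\dagger$-bimodules, verifying that $\mathbb{R}\Hom_{\mathsf{gr}\Lambda}(X,-)$ and $X\otimes^{\mathbb{L}}_{\mathsf{gr}\Lambda^\dagger}-$ genuinely compute the derived functors when evaluated at $X$ itself, and keeping the two internal gradings straight against the homological degree throughout. This is carried out in \cite{Madsen1}, Section~3 (see also \cite{kellerDG}); once it is in place, each of the six items is short.
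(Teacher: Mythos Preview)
The paper does not supply a proof of this proposition: it is quoted verbatim from \cite[Proposition~3.2.1]{Madsen1} and used as a black box. There is therefore nothing in the paper to compare your argument against. Your sketch is a reasonable outline of how the proof in \cite{Madsen1} is structured, and your closing paragraph correctly identifies that the substantive work lies in the construction of the bigraded complex $X$ and the verification that $G_T$ and $F_T$ are computed by it without further resolution; once that is granted, items (a)--(f) are indeed short formal checks of the kind you describe.
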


We are particularly interested in the case when a quasi-hereditary algebra is Koszul with respect to the standard module $\Delta$.
Example 2.4 and 4.7 in \cite{Madsen2} show that if $Z$ is the Brauer algebra associated to the Brauer line, then its qh-cover $\Gamma$ is standard Koszul with grading given by path-length but it is also possible to define another grading in order to make it $\Delta$-Koszul.

\begin{exm} \cite{Madsen1} \cite{Madsen2}
In the case of the Brauer line algebra $Z^{(1,s)}$, the quiver of its qh-cover $\Gamma$ is:
\[ \xymatrix{1 \ar@/^/[r]^\alpha & 2 \ar@/^/[l]^\beta\ar@/^/[r]^\alpha & \cdots \ar@/^/[l]^\beta \ar@/^/[r]^\alpha & s \ar@/^/[r]^\alpha\ar@/^/[l]^\beta & s+1 \ar@/^/[l]^\beta} \] 
bound by the ideal of relations $I=\left( \alpha\beta-\beta\alpha, \alpha^2, \beta^2, e_{s+1}\beta\alpha \right)$.
We can see that $\Gamma$ is Koszul and standard Koszul or Koszul with respect to $\Delta$ depending on the grading that we put on the algebra:
\begin{enumerate}
\item If all the arrows are given degree $1$ then $\Gamma$ is Koszul in the classical sense and standard Koszul.
\item If we put $deg_\Delta\alpha=1$ and $deg_\Delta\beta=0$, this define an algebra grading on $\Gamma$ and the conditions of Definition \ref{deltakoszul} are satisfied with $T=\Gamma_{[0]}=\Delta$. Hence $\Gamma$ is Koszul with respect to $\Delta$ and the Koszul dual algebra ${\Gamma^\dagger}=\ext^*_\Gamma(\Delta,\Delta)$ is the path algebra of the following quiver:
\[ \xymatrix{1 & 2 \ar@/^/[l]^\beta\ar@/_/[l]_{\alpha^*} & \ldots \ar@/^/[l]^\beta \ar@/_/[l]_{\alpha^*} & s \ar@/_/[l]_{\alpha^*}\ar@/^/[l]^\beta & s+1 \ar@/_/[l]_{\alpha^*}\ar@/^/[l]^\beta} \] 

with relations $I'=\left( \beta^2, \alpha^*\beta-\beta\alpha^* \right)$. Moreover it is shown in \cite{Madsen2} that ${\Gamma^\dagger}$ is Koszul in the classical sense.

\end{enumerate}
\end{exm}

\subsection{\texorpdfstring{$\Delta$}{Delta}-Koszulity of qh-covers}

Motivated by the Brauer line case, we want to show that similar results are true for qh-covers of higher zigzag-algebras (see Theorem 4.1 and 4.4 of \cite{Madsen2}). We already know that if $Z$ is a higher zigzag-algebra, then its quasi-hereditary cover $\Gamma$ is Koszul and standard Koszul. Now we want to prove that $\Gamma$ is Koszul with respect to $\Delta$.

First let us recall some homological properties of quasi-hereditary algebras, so let $\Lambda$ be quasi-hereditary with set of weights $I$. For two vertices $i,j\in I$ such that $i\leq j$ the \emph{distance} between them is defined as
\[ d(i,j)=max\{n\in\mathbb{N} : \exists i=i_0<\ldots<i_n=j \}. \]
If $i$ and $j$ are not comparable we set $d(i,j)=\infty$.
Note that, in the case of $\Gamma$, the distance between two vertices $x,y$ is precisely the length of a minimal path $\pi$ from $x$ to $y$ that does not involve arrows of the form $\alpha_0$. Moreover the length of such a path in $Q^{(n,s)}$ is unique and this implies that the distance is additive: if $d(x,y)=h$ and $d(y,z)=k$ then $d(x,z)=h+k$.

\begin{lemma}\cite[Lemma 3]{Far} \label{homprop}
If $\Lambda$ is a quasi-hereditary algebra, then the following hold:
\begin{enumerate}
\item If $i>j$ then $\Hom_\Lambda(\Delta_i,\Delta_j)=0$.
\item If $l>0$ and $i\nless j$ then $\ext_\Lambda^l(\Delta_i,S_j)\simeq\ext_\Lambda^l(\Delta_i,\Delta_j)=0$.
\item If $i\leq j$ and $l>d(i,j)$ then $\ext_\Lambda^l(\Delta_i,S_j)\simeq\ext_\Lambda^l(\Delta_i,\Delta_j)=0$. 
\end{enumerate}
\end{lemma}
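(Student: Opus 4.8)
The statement to prove is Lemma~\ref{homprop}, which is attributed to \cite[Lemma 3]{Far}. Since it is quoted from the literature, the expected ``proof'' is really a recollection of why each of the three items holds for a general quasi-hereditary algebra; let me sketch how I would argue them directly.

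\textbf{Setup and strategy.} The plan is to use only the defining properties of standard modules together with the homological facts about quasi-hereditary algebras already recalled in the excerpt, namely that $\ext^n_\Lambda(\Delta_i,\nabla_j)=0$ for $n>0$, that $\Hom_\Lambda(\Delta_i,\nabla_j)$ is $k$ if $i=j$ and $0$ otherwise, and that a module with a $\Delta$-filtration has top supported only on indices $\geq$ the relevant ones. The key structural input is the ``BGG reciprocity''-style observation that $\nabla_j$ has socle $S_j$ and all other composition factors $S_\ell$ with $\ell<j$; dually, $\Delta_i$ has top $S_i$ and other factors $S_\ell$ with $\ell<i$. I would first prove item (1), then use the long exact sequences coming from the filtration of $\nabla_j$ to deduce (2), and finally refine the argument using the definition of $d(i,j)$ to get (3).

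\textbf{Item (1).} If $i>j$, a nonzero map $\Delta_i\to\Delta_j$ would have image a nonzero submodule of $\Delta_j$; its top is a quotient of the top of $\Delta_i$, hence a sum of copies of $S_i$. But the composition factors of $\Delta_j$ are $S_j$ (once, as the top) and various $S_\ell$ with $\ell<j<i$, so $S_i$ cannot occur in $\Delta_j$ at all. Hence the map is zero.

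\textbf{Items (2) and (3).} First note the two ``Ext versus $S_j$ versus $\Delta_j$'' equivalences: since $\Delta_j$ has top $S_j$ and radical with factors $S_\ell$, $\ell<j$, I would induct on the length of $\Delta_j$ along the short exact sequence $0\to\rad\Delta_j\to\Delta_j\to S_j\to 0$; when $i\nless j$ and more generally when no $\ell$ occurring satisfies $i<\ell$, the vanishing of $\ext^l_\Lambda(\Delta_i,S_\ell)$ for the relevant $\ell$ propagates, giving $\ext^l_\Lambda(\Delta_i,S_j)\cong\ext^l_\Lambda(\Delta_i,\Delta_j)$. Now to show $\ext^l_\Lambda(\Delta_i,\Delta_j)=0$ for $l>0$ when $i\nless j$: filter $\nabla_j$ (or rather work with the injective coresolution of $\Delta_j$ built from $\nabla$'s), using that $\ext^{>0}_\Lambda(\Delta_i,\nabla_m)=0$ for every $m$ and $\Hom_\Lambda(\Delta_i,\nabla_m)=0$ unless $m=i$. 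Since $i\nless j$, the index $i$ never appears as an ``index of a $\nabla$'' in the $\nabla$-coresolution of $\Delta_j$ (all such indices are $\leq j$ and in fact form a chain below $j$), so every term $\Hom_\Lambda(\Delta_i,\nabla_m)$ vanishes, forcing all $\ext^l_\Lambda(\Delta_i,\Delta_j)=0$ including $l=0$ — wait, that over-proves and would kill $\Hom$ too when $i=j$; the honest statement needs $i\nless j$ which does allow $i=j$, but then $\Delta_i=\Delta_j$ and $\Hom\neq 0$. So the correct argument is: for $l>0$ use the $\nabla$-coresolution and the vanishing $\ext^{>0}(\Delta_i,\nabla_m)=0$ to see the $\ext^l$ of $\Delta_j$ agrees with a subquotient of $\Hom(\Delta_i,\nabla_m)$'s appearing in positions $\geq l\geq 1$, and those $\nabla_m$ all have $m$ strictly below $j$ hence $m\neq i$ when $i\nless j$ (since $i\nless j$ and $m<j$ force $m\neq i$ unless $i$ and $m$ incomparable, but $m<j$ and $i\nless j$... ) — the cleanest route is to observe that the $\nabla$-coresolution of $\Delta_j$ only involves $\nabla_m$ with $m\le j$, actually $m$ in the chain from $j$ downward, and $\Hom_\Lambda(\Delta_i,\nabla_m)=k$ iff $m=i$; for $l\ge 1$ the relevant $m$ satisfy $m<j$, and if also $i\nless j$ then $m=i$ is impossible, giving the vanishing. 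For item (3), the same coresolution argument shows the coresolution of $\Delta_j$ has length exactly $d(i,j)$-controlled: if $i\le j$, the only way $\ext^l_\Lambda(\Delta_i,\Delta_j)\ne 0$ is if $\nabla_i$ appears in homological degree $l$ of the $\nabla$-coresolution of $\Delta_j$, and such appearances are bounded by the length of a maximal chain $i=i_0<\cdots<i_l=j$, i.e.\ $l\le d(i,j)$; hence $l>d(i,j)$ forces vanishing.

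\textbf{Main obstacle.} The delicate point is the bookkeeping in items (2) and (3): relating $\ext^\bullet(\Delta_i,-)$ applied to $S_j$ versus $\Delta_j$, and controlling precisely which $\nabla_m$ (or which composition factors $S_m$) can appear and in which homological degree, so that the chain-length bound $d(i,j)$ emerges. This is exactly where the additivity of the distance function (noted in the excerpt for $\Gamma$, but here used in the general abstract form of maximal chains in the poset) does the work. Since the lemma is cited verbatim from \cite{Far}, in the paper I would simply state that the proof is in that reference and move on, but the sketch above is the argument one would reconstruct.
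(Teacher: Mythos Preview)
The paper gives no proof of this lemma; it is simply cited from \cite{Far}. You correctly recognize this and say that in the paper you would just refer to the reference, which matches exactly what is done.

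Your supplementary sketch for item~(1) is fine, but the argument you settle on for (2) and (3) has a real gap: you repeatedly invoke a ``$\nabla$-coresolution of $\Delta_j$'' whose terms are costandard modules $\nabla_m$ with indices $m$ running down a chain below $j$. For a general quasi-hereditary algebra no such object is available. What one has is that $\Delta_j$ admits a coresolution by $\nabla$-\emph{filtered} modules (since $\mathcal F(\nabla)$ is coresolving), but there is no control on which $\nabla_m$ occur in degree $l$, so your index-tracking argument does not go through. Your own hesitation (``wait, that over-proves\ldots'') is a symptom of this.

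The standard route, and the one in Farnsteiner's notes, works on the other variable: the kernel of $P_i\twoheadrightarrow\Delta_i$ is $\Delta$-filtered with factors $\Delta_m$, $m>i$; iterating, the $l$-th syzygy $\Omega^l\Delta_i$ is $\Delta$-filtered with factors $\Delta_m$ for which there is a chain $i=i_0<i_1<\cdots<i_l=m$. Hence $\ext^l_\Lambda(\Delta_i,S_j)\neq 0$ forces $S_j$ into the top of $\Omega^l\Delta_i$, so $j$ sits at the end of a length-$l$ chain above $i$; this immediately gives both the vanishing for $i\nless j$ in (2) and the bound $l\le d(i,j)$ in (3). The passage from $S_j$ to $\Delta_j$ then follows by filtering $\Delta_j$ by simples $S_m$ with $m\le j$ and observing that $i\nless j$ (resp.\ $l>d(i,j)$) implies $i\nless m$ (resp.\ $l>d(i,m)$) for every such $m$.
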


Now we define a new grading on $\Gamma$, that we will denote by $deg_\Delta(-)$, by setting
\[ deg_\Delta(e_x)=0 \quad \forall x\in I, \quad deg_\Delta(\alpha_k)=\begin{cases} 1 \mbox{ if } k\neq 0 \\ 0 \mbox{ if } k=0 \end{cases}
\]
The fact that this is a well defined algebra grading is assured by the fact that every non-monomial relation is of the form $\alpha_i\alpha_j=\alpha_j\alpha_i$.
We call this grading the $\Delta$-grading (according to \cite{Madsen2}) and we denote by $\Gamma_{[i]}$ the $\Delta$-degree $i$ part of $\Gamma$. Then we have the following:

\begin{prop}\label{grading prop}
Let $\Gamma$ be our qh-cover of the higher zigzag-algebra $Z$.
\begin{enumerate}
\item Consider $\Gamma$ with the ordinary grading. If $\ext_{\mathsf{gr}\Gamma}^u(\Delta_y,S_x\langle v\rangle)\neq 0$ then $u=v=d(x,y)$.
\item According to the $\Delta$-grading, $\Gamma_{[0]}\cong \Delta$ as graded $\Gamma$-modules.
\item If $\Gamma$ is given the $\Delta$-grading, then minimal resolutions of standard modules are linear with respect to the $\Delta$-grading.
\end{enumerate}
\end{prop}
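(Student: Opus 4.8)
I would prove the three parts in order, since each feeds into the next.

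For part (1), I would combine the Koszulity of $\Gamma$ (so that minimal projective resolutions of simple modules are linear, i.e. $\ext^u_{\mathsf{gr}\Gamma}(S,\Lambda_0\langle v\rangle)\neq 0$ forces $u=v$) with the standard-Koszulity, which already tells us that $\Delta$ is a linear module; in particular $\ext^u_{\mathsf{gr}\Gamma}(\Delta_y,\Lambda_0\langle v\rangle)\neq 0$ forces $u=v$. So the content of (1) is really the extra assertion $u=d(x,y)$. For that I would invoke Lemma \ref{homprop}: parts (2) and (3) say that $\ext^u_\Lambda(\Delta_y,S_x)=0$ whenever $y\nless x$ or $u>d(x,y)$. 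Since in $\Gamma$ the partial order is such that $y<x$ means there is an arrow-path from $y$ to $x$ avoiding the $\alpha_0$'s, and the distance $d(x,y)$ is the length of that (unique-length) path, the only way $\ext^u\neq 0$ with $u$ arbitrary is $y\le x$ and $u\le d(x,y)$. Combined with $u=v$ from linearity, I then need the reverse inequality $u\ge d(x,y)$, which follows because $S_x$ genuinely appears at the $d(x,y)$-th step of the linear resolution of $\Delta_y$ (equivalently, one can check $\ext^{d(x,y)}_{\mathsf{gr}\Gamma}(\Delta_y,S_x\langle d(x,y)\rangle)\neq 0$ directly by tracing the path $\alpha_{i_1}\cdots\alpha_{i_d}$ through the PBW basis). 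So $u=v=d(x,y)$.

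For part (2), I would argue directly from the description of standard modules in Lemma \ref{stand modules}: $\Delta_x$ is either $S_x$ (when $x$ is not the source of an $\alpha_0$) or the length-two uniserial module with top $S_x$ and socle $S_y$ coming from an arrow $x\xto{\alpha_0}y$. Now $\Gamma_{[0]}$, the $\Delta$-degree-zero part, is spanned by $e_x$ for all $x$ together with those paths built only from $\alpha_0$'s; but since $\alpha_0\alpha_0=0$, the only such nontrivial paths are the single arrows $\alpha_0$. Therefore the right $\Gamma$-module $\Gamma_{[0]}=\bigoplus_x e_x\Gamma_{[0]}$ decomposes as $\bigoplus_x$ (span of $e_x$ and, if it exists, $e_x\alpha_0$), and each summand is exactly $\Delta_x$. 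I should check that this identification respects the $\Gamma$-action and the grading, i.e. that $\Gamma_{[0]}$ is a $\Gamma$-submodule — it is not a submodule of $\Gamma$, but it is a quotient, and the point is that right-multiplication by any $\alpha_i$ ($i\ne 0$) lands in higher $\Delta$-degree, so $\Gamma_{[0]}$ with the induced module structure (as $\Gamma/\Gamma_{[\ge 1]}$-ish, matching the defining property of $\Delta$ as the largest quotient of $P_x$ with no $S_j$, $j>x$) is precisely $\Delta$. This matches the Brauer-line example verbatim.

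For part (3), given (2), $T:=\Delta=\Gamma_{[0]}$ and I want: the minimal graded projective resolution of $\Delta_x$ over $\Gamma$ (with the $\Delta$-grading) is linear, meaning the $i$-th projective term is generated in $\Delta$-degree $i$. The key computational input is part (1): the ordinary-graded Ext between $\Delta_y$ and $S_x$ is concentrated in homological degree $=$ ordinary degree $=d(x,y)$. I would then relate the $\Delta$-grading to the ordinary grading along a projective resolution — on an arrow $\alpha_k$ with $k\ne 0$ the two gradings agree, and $\alpha_0$ contributes $0$ to the $\Delta$-degree and $1$ to the ordinary degree. Since standard modules are built from $\alpha_0$'s and simples from non-$\alpha_0$'s, the syzygies of $\Delta_x$ are (shifts of) standard modules filtered by the $\Delta_y$ with $y$ reachable from $x$, and each homological step raises the $\Delta$-degree by exactly one. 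More carefully: I would run induction on homological degree, using that $\Omega^i(\Delta_x)$ has a $\Delta$-filtration (standard property of quasi-hereditary algebras — projectives are $\Delta$-filtered, hence so are syzygies appropriately), and that by part (1) the relevant Ext groups sit in the right degree so the $i$-th projective cover $P^i$ is generated in $\Delta$-degree $i$.

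**Main obstacle.** I expect part (3) — specifically, cleanly matching up the two gradings along the resolution and justifying that each syzygy step increments the $\Delta$-degree by exactly one — to be the technical heart. The subtlety is that the $\Delta$-grading is coarser than the radical grading (the $\alpha_0$'s are invisible to it), so "linear with respect to the $\Delta$-grading" is genuinely weaker than classical linearity, and one must be careful that the $\alpha_0$-contributions, which could in principle accumulate, do not push generators into the wrong $\Delta$-degree. The clean way around this is precisely part (1): it pins down that $\ext^u_{\mathsf{gr}\Gamma}(\Delta_y, S_x\langle v\rangle)$ is nonzero only when $u = v = d(x,y)$, and since $d$ is additive along the order, the bookkeeping closes. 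Parts (1) and (2) I expect to be routine given Lemmas \ref{stand modules} and \ref{homprop} and the PBW basis.
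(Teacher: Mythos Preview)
Your overall strategy matches the paper's for all three parts, and parts (2) and (3) are essentially correct. For (3) the paper's argument is even simpler than what you sketch: once (1) is known, consecutive summands $P_a$ of $P^u$ and $P_b$ of $P^{u-1}$ in the linear resolution of $\Delta_y$ satisfy $d(y,a)=u$ and $d(y,b)=u-1$, so by additivity $d(b,a)=1$ and the connecting arrow $b\to a$ cannot be $\alpha_0$; hence every differential has $\Delta$-degree one. No $\Delta$-filtration of syzygies is needed.

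The genuine gap is in your argument for part (1), at the reverse inequality $u\geq d(x,y)$. You propose to deduce it from the nonvanishing $\ext^{d(x,y)}_{\mathsf{gr}\Gamma}(\Delta_y,S_x\langle d(x,y)\rangle)\neq 0$, but knowing $\ext^d\neq 0$ does not preclude $\ext^u\neq 0$ for some $u<d$; what you need is a \emph{vanishing} statement, not a nonvanishing one. (The paper's own line ``and so we have $d(x,y)\leq u$'' is equally terse here.) The honest argument is an induction on $u$ along the linear resolution: for $u=1$ one checks directly that the top of $\Omega\Delta_y$ is $\bigoplus_{d(y,z)=1}S_z$; inductively, if every summand $P_w$ of $P^{u-1}$ has $d(y,w)=u-1$, then a summand $P_x$ of $P^u$ is linked to some such $P_w$ by an arrow $w\to x$ in the differential, and were this arrow $\alpha_0$ one would have $y<x<w$ with $d(x,w)=n$, whence by additivity $d(y,x)=(u-1)-n<u$, contradicting $u\leq d(y,x)$ from Lemma~\ref{homprop}(3). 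So the arrow is non-$\alpha_0$ and $d(y,x)=u$. Note that this induction already shows the differentials avoid $\alpha_0$, so (3) drops out for free --- contrary to your ``main obstacle'' assessment, the technical heart is (1), not (3).
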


\begin{proof} 
\begin{enumerate}
\item Suppose that $\ext_{\mathsf{gr}\Gamma}^u(\Delta_y,S_x\langle v\rangle)\neq 0$. Since $\Gamma$ is standard Koszul we have $u=v$. Consider a linear projective resolution of $\Delta_y$:
\[\ldots\to P^u\to\ldots \to P^1\to P^0=P_y\to\Delta_y\to 0 \]
where the indecomposable projective module $P_x$ appears as a direct summand in $P^u$. By part $(2)$ of \ref{homprop} we have that $y<x$ and so, by the definition of the partial order on the set of weights of $\Gamma$, there exists a path $\pi$ from $y$ to $x$ that involves only arrows $\alpha_k$ for $k\neq 0$. The length of this path $\pi$ is equal to $d(x,y)$ and so we have $d(x,y)\leq u$.
On the other hand by part $(3)$ of \ref{homprop} we deduce that $u\leq d(x,y)$ and this proves the claim.

\item This follows from the definition of $\Delta$-grading.

\item By what has been proved in part $(1)$, if $P^u\to P^v$ is a map in a linear projective resolution of a standard module $\Delta_x$, then the image of generators of $P^u$ in $P^v$ are linear combinations of elements of the form $e_a\alpha_ke_b$ with $k\neq 0$, since $d(a,b)=1$. Then the resolution is also linear with respect to the $\Delta$-grading. 
\end{enumerate} \end{proof}

From the above results we have the following theorem:

\begin{thm}\label{delta koszul}
Consider $\Gamma$ as a graded algebra according to the $\Delta$-grading. Then $\Gamma$ is Koszul with respect to $\Delta$.
\end{thm}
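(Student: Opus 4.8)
The plan is to verify the two conditions in Definition \ref{deltakoszul} for the module $T=\Delta$, using the preparatory results collected in Proposition \ref{grading prop}. First I would check condition (1): that $\Delta$ is a tilting $\Gamma_{[0]}$-module. By part (2) of Proposition \ref{grading prop} we have $\Gamma_{[0]}\cong\Delta$ as graded $\Gamma$-modules, so $\Delta$ is a progenerator over $\Gamma_{[0]}$ and in particular a tilting $\Gamma_{[0]}$-module of projective dimension zero; one should note that $\gldim\Gamma_{[0]}<\infty$ because $\Gamma_{[0]}$ is (by the $\Delta$-grading description) a quotient of the higher zigzag quiver with all $\alpha_0$-arrows removed, which is directed, hence of finite global dimension. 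This step is essentially bookkeeping.

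The substantive step is condition (2): graded self-orthogonality of $\Delta$, i.e. $\ext^i_{\mathsf{gr}\Gamma}(\Delta,\Delta\langle j\rangle)=0$ whenever $i\neq j$, where now the grading is the $\Delta$-grading. The plan is to reduce this to a statement about the ordinary (radical) grading that we already control. By part (3) of Proposition \ref{grading prop}, minimal projective resolutions of standard modules are linear with respect to the $\Delta$-grading; this means that in such a resolution $\cdots\to P^i\to\cdots\to P^0\to\Delta_x\to 0$ the term $P^i$ is generated in $\Delta$-degree $i$. Applying $\Hom_{\mathsf{gr}\Gamma}(-,\Delta_y\langle j\rangle)$ and computing cohomology, any nonzero class in $\ext^i_{\mathsf{gr}\Gamma}(\Delta_x,\Delta_y\langle j\rangle)$ forces a nonzero graded map $P^i\to\Delta_y\langle j\rangle$; since $P^i$ is generated in $\Delta$-degree $i$ and $\Delta_y\langle j\rangle$ is concentrated (as a top) in $\Delta$-degree $j$ — recall $\Delta_y$ is either simple or uniserial of radical length two with the bottom arrow an $\alpha_0$, which has $\Delta$-degree $0$, so $\Delta_y$ lives in a single $\Delta$-degree — we get $i=j$. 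The converse of the linearity characterisation in Proposition 3.4.2 (applied to the $\Delta$-grading, or directly the argument just sketched) then gives vanishing for $i\neq j$.

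I would also want to double-check that the relevant $\ext$-groups are actually computed against $\Delta$ and not merely against simples: the clean input we have, part (1) of Proposition \ref{grading prop}, concerns $\ext^u_{\mathsf{gr}\Gamma}(\Delta_y,S_x\langle v\rangle)$ in the \emph{ordinary} grading, forcing $u=v=d(x,y)$. To pass from $S_x$ to $\Delta_x$ one uses the two-step filtration $0\to S_y\langle 1\rangle\to\Delta_x\to S_x\to 0$ and the long exact sequence in $\ext$, together with additivity of the distance function $d$ noted just before Lemma \ref{homprop}; this shows that $\ext^u_{\mathsf{gr}\Gamma}(\Delta_y,\Delta_x\langle v\rangle)\neq 0$ again forces $u=v$ (the two simple layers of $\Delta_x$ differ by one in ordinary degree and by zero in $\Delta$-degree, which is exactly what makes the $\Delta$-grading collapse the ambiguity). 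Then Lemma \ref{graded self orth} is not strictly needed for the proof of $\Delta$-Koszulity itself, but it is what makes the resulting $\ext$-algebra $\Gamma^\dagger=\ext^*_\Gamma(\Delta,\Delta)$ into a well-behaved graded algebra for the next section.

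The main obstacle I anticipate is the careful translation between the two gradings — in particular making sure that ``linear with respect to the $\Delta$-grading'' really does imply the $\ext$-vanishing $\ext^i_{\mathsf{gr}\Gamma}(\Delta,\Delta\langle j\rangle)=0$ for $i\neq j$, since the target $\Delta$ is not semisimple and the standard characterisation of linearity (Proposition 3.4.2 in the excerpt) is phrased with $\Lambda_0$ as the test module. The resolution of this is to observe that each $\Delta_y$ has a radical filtration by simples all sitting in the \emph{same} $\Delta$-degree, so testing against $\Delta$ is, for the purpose of tracking $\Delta$-degree, the same as testing against its top; everything else is the bookkeeping of which degree a generator of $P^i$ can map to. Once that is in place the theorem follows.
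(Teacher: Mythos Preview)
Your approach is essentially the paper's own: use Proposition \ref{grading prop}(2) to identify $\Gamma_{[0]}\cong\Delta$, verify $\gldim\Gamma_{[0]}<\infty$, and then use Proposition \ref{grading prop}(3) together with the observation that each $\Delta_y$ is concentrated in a single $\Delta$-degree to conclude $\ext^i_{\mathsf{gr}\Gamma}(\Delta,\Delta\langle j\rangle)=0$ for $i\neq j$. The paper argues exactly this way, in fewer words.

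One slip worth correcting: your description of $\Gamma_{[0]}$ is inverted. The $\Delta$-grading assigns degree $0$ to the arrows $\alpha_0$ and degree $1$ to all other $\alpha_k$, so $\Gamma_{[0]}$ is the subalgebra generated by the $\alpha_0$-arrows, not the quotient obtained by removing them. Concretely, $\Gamma_{[0]}$ decomposes into a product of type $A$ Nakayama algebras on quivers $x_1\xto{\alpha_0}x_2\xto{\alpha_0}\cdots\xto{\alpha_0}x_k$ with relations $\alpha_0\alpha_0=0$; these are directed and hence of finite global dimension. Your stated reason (``$\alpha_0$-arrows removed, which is directed'') describes instead the exact Borel subalgebra $B$ from Section~2.3. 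The conclusion $\gldim\Gamma_{[0]}<\infty$ is unaffected, but the identification matters elsewhere (e.g.\ for the bigrading in Section~5).

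Finally, the paragraph in which you pass from $S_x$ to $\Delta_x$ via the long exact sequence is unnecessary: once you know the resolution of $\Delta$ is $\Delta$-linear and each $\Delta_y$ sits in a single $\Delta$-degree, the vanishing of $\Hom_{\mathsf{gr}\Gamma}(P^i,\Delta\langle j\rangle)$ for $i\neq j$ is immediate, and that already gives the required $\ext$-vanishing without any detour through simples.
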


\begin{proof}
The algebra $\Gamma_{[0]}$ can be decomposed in subalgebras each of which is isomorphic to a type $A$ algebra with underlying quiver:
\[ x_1\xto{\alpha_0}x_2\xto{\alpha_0}\cdots \xto{\alpha_0} x_k \]
bound by relations $\alpha_0\alpha_0=0$, for $1\leq k\leq s+1$. These algebras have all finite global dimension, hence $\Gamma_{[0]}$ has finite global dimension as well. $\Delta$ is a tilting $\Gamma_{[0]}$-module by part $(2)$ of Proposition \ref{grading prop}. Now let $P^i$ be a projective module in a minimal graded projective resolution of $\Delta$; then $P^i$ is generated in degree $i$ and since $\Delta\langle j\rangle$ is concentrated in degree $j$ we have that $\Hom_{\mathsf{gr}\Gamma}(P^i,\Delta\langle j\rangle)=0$ if $i\neq j$. Hence  $\ext_{\mathsf{gr}\Gamma}^i(\Delta, \Delta\langle j\rangle)= 0$ whenever $i\neq j$. \end{proof}

The same argument that has been used in the proof of Theorem \ref{delta koszul} to show that $\Delta$ is graded self-orthogonal can be stated in a more general way.

\begin{lemma}\label{koszulmodules}
Let $\Lambda=\bigoplus_{i\geq 0}\Lambda_{[i]}$ be a graded algebra (with $\Lambda_{[0]}$ not necessarily semisimple) and $T$ a finitely generated $\Lambda$-module concentrated in degree zero. 
\begin{enumerate}
\item If $T$ is linear and then it is graded self-orthogonal. 
\item If $\ext^i_{\mathsf{gr}\Lambda}(T,\Lambda_{[0]}\langle j\rangle)=0$ unless $i=j$, then $T$ is linear.
\end{enumerate}
\end{lemma}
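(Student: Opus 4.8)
The plan is to mimic the module-theoretic characterisation of linearity from \cite[Proposition 1.14.2]{BSGkoszul}, but carried out with respect to a possibly non-semisimple degree-zero part, so the ``simple-top'' argument gets replaced by a ``generated-in-degree-$i$'' argument using $\Lambda_{[0]}$ in place of $\Lambda_0$. For part (1), suppose $T$ is linear, so it admits a graded projective resolution $\cdots\to P^2\to P^1\to P^0\to T\to 0$ in which $P^i$ is generated in degree $i$. Since $T$ is concentrated in degree zero, $T\langle j\rangle$ is concentrated in degree $j$; hence $\Hom_{\mathsf{gr}\Lambda}(P^i,T\langle j\rangle)=0$ whenever $i\neq j$, because a graded map of degree zero out of a module generated in degree $i$ is determined by where it sends the degree-$i$ generators, and there is nothing in degree $i\neq j$ of $T\langle j\rangle$ to hit. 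Computing $\ext^i_{\mathsf{gr}\Lambda}(T,T\langle j\rangle)$ as the cohomology of $\Hom_{\mathsf{gr}\Lambda}(P^\bullet,T\langle j\rangle)$, every term of that complex with total degree $i\neq j$ vanishes, so the cohomology vanishes in those spots; this is precisely graded self-orthogonality $\ext^i_{\mathsf{gr}\Lambda}(T,T\langle j\rangle)=0$ for $i\neq j$. (This is literally the argument already used in the proof of Theorem \ref{delta koszul}, now isolated.)

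For part (2), assume $\ext^i_{\mathsf{gr}\Lambda}(T,\Lambda_{[0]}\langle j\rangle)=0$ for $i\neq j$, and build a minimal graded projective resolution of $T$ step by step, proving by induction that $P^i$ is generated in degree $i$. Start with $P^0$, the projective cover of $T$; since $T$ lives in degree zero, $P^0$ is generated in degree zero and $\Omega^1 T=\Ker(P^0\to T)$ lives in degrees $\geq 1$. For the inductive step, suppose $\Omega^i T$ lives in degrees $\geq i$ and that the resolution so far is linear; I want to show $\Omega^i T$ is actually \emph{generated} in degree $i$, which forces $P^i$ (its projective cover) to be generated in degree $i$ and pushes $\Omega^{i+1}T$ into degrees $\geq i+1$. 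The mechanism is the standard one: the part of $\Omega^iT$ not generated in degree $i$ would produce a nonzero graded map $\Omega^iT\to \Lambda_{[0]}\langle j\rangle$ for some $j>i$ (project onto the top in degree $j$, which is a $\Lambda_{[0]}$-module), and by dimension shifting along the linear portion of the resolution this gives a nonzero class in $\ext^i_{\mathsf{gr}\Lambda}(T,\Lambda_{[0]}\langle j\rangle)$ with $j>i$, contradicting the hypothesis.

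The main obstacle, and the only place where the non-semisimplicity of $\Lambda_{[0]}$ needs care, is the dimension-shifting identification $\Hom_{\mathsf{gr}\Lambda}(\Omega^iT,\Lambda_{[0]}\langle j\rangle)/(\text{maps factoring through }P^i)\cong \ext^i_{\mathsf{gr}\Lambda}(T,\Lambda_{[0]}\langle j\rangle)$: one must check that a graded map $P^i\to\Lambda_{[0]}\langle j\rangle$ with $j>i$ automatically kills the image of $P^{i+1}\to P^i$, so that the connecting maps in the long exact $\Hom/\ext$ sequence behave as in the semisimple case. This works because $\Lambda_{[0]}\langle j\rangle$ is concentrated in degree $j$ while, by the inductive hypothesis that the resolution is linear, the image of $P^{i+1}\to P^i$ sits in degrees $\geq i+1$ and maps under any degree-zero graded morphism into degrees $\geq i+1$ of a module concentrated in a single degree $j$; the only obstruction would be $j\geq i+1$, which is exactly the range we are in, so one argues instead that a map out of $P^i$ (generated in degree $i$) landing in degree $j>i$ is forced to be zero on the degree-$i$ generators and hence on all of $P^i$ unless $\Lambda_{[0]}$ contributes in higher degree — but $\Lambda_{[0]}\langle j\rangle$ has content only in degree $j$, so factoring happens for trivial reasons. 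I would spell this bookkeeping out carefully, as it is the heart of why the semisimplicity of $\Lambda_0$ in the classical statement can be weakened to $T$ being concentrated in degree zero. Once this is in place, the induction closes and $T$ is linear.
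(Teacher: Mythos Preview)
Your plan is correct and follows the same route as the paper: part~(1) is exactly the argument isolated from Theorem~\ref{delta koszul}, and part~(2) is the inductive construction of a linear resolution via syzygies, which the paper lifts verbatim from \cite[Proposition~2.14.2]{BSGkoszul}.

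One caution about your final paragraph: the bookkeeping there is tangled. In the dimension-shifting identity you need $\ext^{i}_{\mathsf{gr}\Lambda}(T,N)\cong \Hom_{\mathsf{gr}\Lambda}(\Omega^{i}T,N)\big/\operatorname{im}\Hom_{\mathsf{gr}\Lambda}(P^{i-1},N)$, so the relevant projective is $P^{i-1}$, not $P^{i}$; and your remark about maps $P^i\to N$ killing the image of $P^{i+1}\to P^i$ invokes $P^{i+1}$, which does not yet exist at this stage of the induction. The paper sidesteps all of this in one line: since $P^{i}$ is generated in degree $i$ and $N=\Lambda_{[0]}\langle j\rangle$ is concentrated in the single degree $j$, one has $\Hom_{\mathsf{gr}\Lambda}(P^{i},N)=0$ whenever $j\neq i$, hence $\ext^{i+1}_{\mathsf{gr}\Lambda}(T,N)=\Hom_{\mathsf{gr}\Lambda}(K,N)$ for $K=\Omega^{i+1}T$, and the vanishing hypothesis forces $K$ to be generated in degree $i+1$. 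That is the whole argument; no further case analysis or discussion of non-semisimplicity is needed.
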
 

\begin{proof} The first statement is clear from the proof of Theorem \ref{delta koszul}.

The proof of the second statement can be found in \cite{BSGkoszul}, Proposition 2.14.2, in the case when $\Lambda_{[0]}$ is semisimple and the part of the proof we are interested in is still true without any assumption on $\Lambda_{[0]}$. We include the proof here for the convenience of the reader.

$T$ is concentrated in degree zero over $\Lambda$, so that its projective cover consists of a projective module $P^0$ generated in degree zero. We want to find a linear graded projective resolution for $T$ by induction, so let us assume that we have a projective resolution 
\[P^i\to P^{i-1}\to\cdots \to P^0\to T \to 0\]
such that $P^i$ is generated in degree $i$ over $\Lambda$ and the differential is injective on the degree $i$ part of $P^i$, $P^i_{[i]}$. Then if we put $K=\Ker(P^i\to P^{i-1})$, we have that $K_{[j]}=0$ for $j<i+1$. If $N$ is any $\Lambda$-module that is concentrated in one single degree then $\ext^{i+1}_{\mathsf{gr}\Lambda}(T,N)=\Hom_{\mathsf{gr}\Lambda}(K,N)$. But then our assumption means that $\Hom_{\mathsf{gr}\Lambda}(K,\Lambda_{[0]}\langle j\rangle)=0$ unless $i+1=j$, that is, $K$ is generated in degree $i+1$ over $\Lambda$. Then we can find a projective cover $P^{i+1}$ of $K$ that is generated in degree $i+1$ and we can conclude by induction.  \end{proof}

It is important to underline that in general the two conditions in part (1) of Lemma \ref{koszulmodules} are not equivalent as the following example shows.
\begin{exm}
Let $\Lambda$ be the path algebra of the following quiver:
\[ \xymatrix{1  & 2 \ar@/^/[l]^a \ar@/_/[l]_c & 3 \ar@/_/[l]_d \ar@/^/[l]^b } \] 
with relations $ba=0$, $da=bc$. Define a grading $|\cdot|$ on $\Lambda$ by setting $|a|=|b|=0$ and $|d|=|c|=1$ and let $\Lambda_{[0]}$ be the subalgebra of $\Lambda$ concentrated in degree zero. Then $\Lambda$ is Koszul with respect to $D\Lambda_{[0]}$ but the simple module $S_3$ is a direct summand of $D\Lambda_{[0]}$ and its (graded) projective resolution is:
\[ 0 \to P_1\oplus P_1\langle 1\rangle \to P_2\oplus P_2\langle 1\rangle \to P_3 \to S_3\to 0 \] 
hence it is not linear.
\end{exm}

%\begin{lemma}\label{self-ortog}
%Let $\Lambda$ be a positively graded algebra and $T$ a finitely generated (graded) $\Lambda$-module. Then $T$ is graded self-orthogonal if and only if $DT$ is.
%\end{lemma}
%\begin{proof}
%For any $i,j\ge 0$ we have:
%\[\begin{split} \ext^j_{\mathsf{gr}\Lambda}(DT,DT\langle i\rangle) & \cong \Hom_{\mathcal{D}^b{\mathsf{gr}\Lambda}}(DT,DT\langle i\rangle [j]) \\ & \cong \Hom_{\mathcal{D}^b{\mathsf{gr}\Lambda}}(DT,\Hom_k(T\langle -i\rangle, k) [j]) \\ & \cong \Hom_k(T\langle -i \rangle \otimes DT,k [j]) \\ & \cong \Hom_{\mathcal{D}^b{\mathsf{gr}\Lambda}}(T\langle -i\rangle ,\Hom_k(DT,k) [j]) \\ & \cong \Hom_{\mathcal{D}^b{\mathsf{gr}\Lambda}}(T\langle -i\rangle ,T [j]) \cong \ext^j_{\mathsf{gr}\Lambda}(T,T\langle i\rangle)\end{split} \]
%where we have used the adjointness of the two pairs of functors $(T\otimes_k-,\Hom_k(T,-))$ and $(-\otimes_k DT,\Hom_k(DT,-))$. \end{proof}\\

\section{\texorpdfstring{$\Delta$}{Delta}-Koszul duality} \label{sec delta kosz}

In this last section we want to study the $\Delta$-Koszul dual ${\Gamma^\dagger}$ when $\Gamma$ is the qh-cover that we defined for a higher zigzag-algebra. First we want to show that ${\Gamma^\dagger}$ is a bigraded algebra and that it is Koszul when endowed with the total grading $|\cdot|^{tot}$. This is true when we consider the qh-cover of the Brauer line by \cite[Theorem 4.4]{Madsen2} and the proof is based on the existence of a particular \textit{height function} on the set of vertices of the quiver of $\Lambda$ (see \cite{Madsen2}). It is then reasonable to try to generalize this result for higher zigzag-algebras (of type $A$). To conclude we compute the quiver of the $\Delta$-dual algebra and, using the fact that Koszulity implies quadraticity, we determine its ideal of relations.

\subsection{Bigraded \texorpdfstring{$\Delta$}{Delta}-Koszul algebras}
Suppose we can define two gradings $|\cdot|^\flat$ and $|\cdot|^\sharp$ on $\Lambda$, with shifts $\langle\cdot\rangle^\flat$ and $\langle\cdot\rangle^\sharp$, and corresponding categories of graded modules $\mathsf{gr}^\flat\Lambda$ and $\mathsf{gr}^\sharp\Lambda$ respectively. Let $|\cdot|^{tot}$ be the total grading on $\Lambda$ obtained by adding the $|\cdot|^\flat$-degree and the $|\cdot|^\sharp$-degree. For $i\geq 0$, denote by $\Lambda_{[i]}$ the degree-$i$ subspace of $\Lambda$ with respect to $|\cdot|^\flat$, and by $\Lambda_i$ the degree-$i$ subspace of $\Lambda$ with respect to $|\cdot|^{tot}$; then $\Lambda_0\subseteq \Lambda_{[0]}$. Suppose moreover that $(\Lambda,|\cdot|^\flat)$ is $\Lambda_{[0]}$-Koszul and let $({\Lambda^\dagger},D\Lambda_{[0]})$ be the Koszul dual of $(\Lambda,\Lambda_{[0]})$.

The grading $|\cdot|^\sharp$ on $\Lambda$ induces a grading on ${\Lambda^\dagger}$ in the following way.
Since $\Lambda_{[0]}$ is concentrated in $|\cdot|^\flat$-degree zero, the $|\cdot|^\sharp$-degree on $\Lambda_{[0]}$ coincide with $|\cdot|^{tot}$, so $\Lambda_{[0]}$ inherits a graded structure from $|\cdot|^\sharp$ by defining the graded parts $(\Lambda_{[0]})_n=\Lambda_n\cap \Lambda_{[0]}$. Put $V_{n,j}=\ext^n_{\mathsf{gr}^\sharp\Lambda}(\Lambda_{[0]},\Lambda_{[0]}\langle j\rangle^\sharp)$; the Yoneda extension groups of $\Lambda_{[0]}$ are graded $k$-vector spaces:
\[ \ext^n_\Lambda(\Lambda_{[0]},\Lambda_{[0]})=\bigoplus_{j\ge 0}\ext^n_{\mathsf{gr}^\sharp\Lambda}(\Lambda_{[0]},\Lambda_{[0]}\langle j\rangle^\sharp)=\bigoplus_{j\ge0}V_{n,j}\]
%=\bigoplus_{j\ge 0}\ext^n_\Lambda(A,A)_j=
Setting $V_{\bullet,j}=\bigoplus_{n\geq 0}V_{n,j}$ gives a grading on ${\Lambda^\dagger}=\bigoplus_{j\geq 0}V_{\bullet,j}$ that we will denote again by $|\cdot |^\sharp$.
The algebra ${\Lambda^\dagger}=\ext^\ast_\Lambda(\Lambda_{[0]},\Lambda_{[0]})$ is also a graded algebra with respect to the $\ext$-grading since, for any $n,m\ge0$ we have
%\[ \begin{split}
%\ext^n_\Lambda(A,A)\ext^m_\Lambda(A,A) & =\left(\oplus_{i\ge %0}\ext^n_\Lambda(A,A)_i\right)\left(\oplus_{j\ge 0}\ext^m_\Lambda(A,A)_j\right) \\ & \cong \oplus_{k\ge 0}\oplus_{i+j=k}\ext^n_\Lambda(A,A)_i\ext^m_\Lambda(A,A)_j \\ & \subseteq \oplus_{k\ge 0}\ext^{n+m}_\Lambda(A,A)_k = \ext^{n+m}_\Lambda(A,A) \end{split}.\] 
\[ \ext^n_\Lambda(\Lambda_{[0]},\Lambda_{[0]})\ext^m_\Lambda(\Lambda_{[0]},\Lambda_{[0]})\subseteq \ext^{n+m}_\Lambda(\Lambda_{[0]},\Lambda_{[0]}) \]
Note that, since $\Lambda_{[0]}$ is graded self-orthogonal with respect to $|\cdot|^\flat$, the $\ext$-grading on ${\Lambda^\dagger}$ is precisely the one induced by $|\cdot |^\flat$; hence we will denote the $\ext$-grading on $\Lambda^\dagger$ again by $|\cdot |^\flat$. The decomposition of $\Lambda^\dagger$ in bigraded subspaces is $\Lambda^\dagger=\bigoplus_{n,j\ge0}V_{n,j}=\bigoplus_{n\ge0}
\left(\bigoplus_{j\ge0}V_{n,j}\right)$. We can define $V_n=\bigoplus_{i+j=n}V_{i,j}$ so that
\[\begin{split}
V_{0,0} & =\Hom_{\mathsf{gr}^\sharp\Lambda}(\Lambda_{[0]},\Lambda_{[0]})(\cong\Lambda_0), \\
V_{0,1} & =\Hom_{\mathsf{gr}^\sharp\Lambda}(\Lambda_{[0]},\Lambda_{[0]}\langle 1\rangle^\sharp), \\
V_{1,0} & =\ext^1_{\mathsf{gr}^\sharp\Lambda}(\Lambda_{[0]},\Lambda_{[0]}), \\
\ldots
\end{split}\]
Then we can write ${\Lambda^\dagger}=\bigoplus_{n\ge0}V_n$ and this defines a new graded structure on ${\Lambda^\dagger}$ as a $k$-vector space. From the above we have that
\[\begin{split}
V_nV_m & =
\left(\bigoplus_{i+j=n}\ext^i_{\mathsf{gr}^\sharp\Lambda}(\Lambda_{[0]},\Lambda_{[0]}\langle j \rangle^\sharp)\right) 
\left(\bigoplus_{{\vphantom{j}}h+l=m}\ext^h_{\mathsf{gr}^\sharp\Lambda}(\Lambda_{[0]},\Lambda_{[0]}\langle l \rangle^\sharp)\right) 
\\ & \subseteq \bigoplus_{i+j+h+l=n+m}\ext^{i+h}_{\mathsf{gr}^\sharp\Lambda}(\Lambda_{[0]},\Lambda_{[0]}\langle j+l \rangle^\sharp) \end{split} \]
hence this gives us a graded structure on ${\Lambda^\dagger}$ as a $k$-algebra. Finally we will denote this grading on ${\Lambda^\dagger}$ by $|\cdot|^{tot}$ and the category of (finitely generated) graded modules by $\mathsf{tgr}\Lambda$.

The first result of the following is essentially Proposition 4.2 of \cite{Madsen2} when $\Lambda$ is quasi-hereditary and $\Delta$-Koszul, with $\Delta$-grading given by $|\cdot|^\flat$ so that $\Lambda_{[0]}=\Delta$. Recall that $G_\Delta=\Hom_{\D(\mathsf{gr}^\flat\Lambda)}(\Delta,-)$.

\begin{prop}\cite{Madsen2} \label{simple costand}
Let $\Lambda$ be a bigraded quasi-hereditary algebra, with gradings $|\cdot |^\flat$ and $|\cdot |^\sharp$ as before, that is also $\Delta$-Koszul with respect to $|\cdot |^\flat$. Then
\begin{enumerate}
\item $ G_\Delta(\nabla_x)\cong S_x$
\item $ S_x\langle j\rangle^\sharp \cong G_\Delta(\nabla_x\langle j\rangle^\sharp)$
\end{enumerate}
where $\nabla_x$ denotes the costandard $\Lambda$-module of weight $x$ and $S_x$ is the simple ${\Lambda^\dagger}$-module whose projective cover is $G_\Delta(\Delta_x)$.
\end{prop}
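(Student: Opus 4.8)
The plan is to follow Madsen's argument for Proposition~4.2 of \cite{Madsen2}: compute $G_\Delta(\nabla_x)$ by reading off its hypercohomology from Proposition~\ref{delta koszul propert}(e), which gives
\[
\bigl(H^uG_\Delta(\nabla_x)\bigr)_v\;\cong\;\ext^{u+v}_{\mathsf{gr}^\flat\Lambda}(\Delta,\nabla_x\langle v\rangle^\flat).
\]
The homological input is that $\Lambda$ is quasi-hereditary, so by the orthogonality of standard and costandard modules recalled in Section~2 one has $\ext^n_\Lambda(\Delta_y,\nabla_x)=0$ for all $n>0$ and $\Hom_\Lambda(\Delta_y,\nabla_x)\cong\delta_{xy}\,k$; summing over $y$ and using that an ungraded $\ext$-group is the direct sum of the graded $\ext$-groups over all internal shifts, we obtain $\ext^n_\Lambda(\Delta,\nabla_x)=0$ for $n>0$ and $\Hom_\Lambda(\Delta,\nabla_x)\cong k$. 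Hence the right-hand side above vanishes unless $u+v=0$, and when $u+v=0$ it equals $\Hom_{\mathsf{gr}^\flat\Lambda}(\Delta,\nabla_x\langle -u\rangle^\flat)$, a space which is nonzero for exactly one value of $u$.

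Next I would check that this value is $u=0$. Using the standard grading normalisation of costandard modules — $\nabla_x$ realised inside $I_x=\dual(\Lambda e_x)$, so that $\nabla_x$ lives in non-positive $|\cdot|^\flat$-degree with $\soc\nabla_x=S_x$ in degree $0$ — and recalling that $\Delta=\Lambda_{[0]}$ is concentrated in $|\cdot|^\flat$-degree $0$ (this is Proposition~\ref{grading prop}(2) in the zigzag case), the unique nonzero homomorphism $\Delta\to\nabla_x$, whose image is the socle $S_x$, is a graded map of degree $0$. Thus $\Hom_{\mathsf{gr}^\flat\Lambda}(\Delta,\nabla_x)\cong k$, the only nonvanishing hypercohomology is $H^0G_\Delta(\nabla_x)$, and it is one-dimensional and concentrated in degree zero. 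A one-dimensional module is automatically simple, so $G_\Delta(\nabla_x)\cong S_y$ for some weight $y$; it remains to see that $y=x$.

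For this last point I would use functoriality rather than a derived equivalence. Compose the split projection $\Delta\onto\Delta_x$ onto the $x$-summand with the nonzero map $\Delta_x\to\nabla_x$ to get $f\colon\Delta\to\nabla_x$. Applying $G_\Delta$ and using Proposition~\ref{delta koszul propert}(a) (so $G_\Delta(\Delta)\cong\Lambda^\dagger$ and $G_\Delta(\Delta_x)\cong e_x\Lambda^\dagger$, which by the statement's definition is the projective cover of $S_x$), the induced map in degree-zero cohomology is, by naturality of Proposition~\ref{delta koszul propert}(e), the post-composition map $\End_{\mathsf{gr}^\flat\Lambda}(\Delta)\to\Hom_{\mathsf{gr}^\flat\Lambda}(\Delta,\nabla_x)$, $g\mapsto fg$; it sends $\mathrm{id}_\Delta$ to $f\neq0$, so $H^0G_\Delta(f)\colon\Lambda^\dagger\to S_y$ is nonzero and factors through the split surjection $\Lambda^\dagger\onto e_x\Lambda^\dagger$. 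A nonzero map from the indecomposable projective $e_x\Lambda^\dagger$ to the simple module $S_y$ forces $y=x$, which proves~(1). Part~(2) then follows at once: the complex of bigraded bimodules defining $G_\Delta$ makes it commute with $|\cdot|^\sharp$-shifts — the $|\cdot|^\sharp$-grading is transverse to the $\ext$-grading, which coincides with $|\cdot|^\flat$ by Lemma~\ref{graded self orth} — so $G_\Delta(\nabla_x\langle j\rangle^\sharp)\cong G_\Delta(\nabla_x)\langle j\rangle^\sharp\cong S_x\langle j\rangle^\sharp$.

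The step I expect to be most delicate is the grading bookkeeping in the second paragraph: confirming that the single surviving hypercohomology really lies in total bidegree $(0,0)$, rather than in some shift, requires being careful about how $\nabla_x$ is graded in \emph{both} $|\cdot|^\flat$ and $|\cdot|^\sharp$ and about the Koszul-type regrading built into Proposition~\ref{delta koszul propert}(c)--(e). Once those normalisations are pinned down the argument is formal.
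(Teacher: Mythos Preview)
Your argument is correct and follows essentially the same route as the paper: both compute $G_\Delta(\nabla_x)$ via Proposition~\ref{delta koszul propert}(e) together with the standard--costandard orthogonality $\ext^n_\Lambda(\Delta,\nabla_x)\cong\delta_{n,0}\,k$, reducing to a one-dimensional object concentrated in bidegree $(0,0)$; and for part~(2) both use that $G_\Delta$ commutes with the $|\cdot|^\sharp$-shift.

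The one genuine difference is in how you pin down \emph{which} simple appears. The paper uses the derived equivalence to transport the orthogonality $\Hom_{\mathcal D(\mathsf{gr}^\flat\Lambda)}(\Delta_y,\nabla_x)=0$ for $y\neq x$ into $\Hom_{\mathcal D(\mathsf{gr}^\flat\Lambda^\dagger)}(G_\Delta(\Delta_y),G_\Delta(\nabla_x))=0$, concluding $G_\Delta(\nabla_x)\cong\modtop G_\Delta(\Delta_x)$. You instead push a single nonzero map $\Delta\twoheadrightarrow\Delta_x\to\nabla_x$ through $G_\Delta$ and read off a nonzero map $e_x\Lambda^\dagger\to G_\Delta(\nabla_x)$. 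Your version is slightly more elementary in that it only needs functoriality and Proposition~\ref{delta koszul propert}(a), not the full faithfulness implicit in the paper's isomorphism of Hom-spaces; the paper's version is cleaner in that it avoids tracking a specific morphism. Your careful discussion of the grading normalisation of $\nabla_x$ is a point the paper leaves implicit, and it is indeed where the bookkeeping lives.
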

The original statement in \cite{Madsen2} is about standard Koszul algebras admitting a particular height function but the proof is still valid in the case of $\Delta$-Koszul algebras. We include the original argument here for the convenience of the reader.
\begin{proof}\begin{enumerate}
\item By Proposition \ref{delta koszul propert}(e), we have 
\[(H^kG_\Delta(\nabla_x))_j\cong\ext^{k+j}_{\mathsf{gr}^\flat\Lambda}(\Delta,\nabla_x\langle j\rangle^\flat)=0 \]
whenever $k\neq 0$ or $j\neq 0$. Then
\[ \begin{split}
G_\Delta(\nabla_x) & \cong (H^0(G_\Delta(\nabla_x))_0 \\
                   & \cong \Hom_{\mathsf{gr}^\flat\Lambda}(\Delta,\nabla_x)\\
                   & \cong \Hom_{\mathsf{gr}^\flat\Lambda}(\Delta_x,\nabla_x), \end{split} \]
that is a one-dimensional $k$-vector space. Moreover, if $y\neq x$,
\[\Hom_{\mathcal{D}(\mathsf{gr}^\flat{\Lambda^\dagger})}(G_\Delta(\Delta_y),G_\Delta(\nabla_x))\cong\Hom_{\mathcal{D}(\mathsf{gr}^\flat\Lambda)}(\Delta_y,\nabla_x)=0 \]
so we must have $G_\Delta(\nabla_x)\cong \modtop G_\Delta(\Delta_x)$.
\item We have 
\[ \begin{split} S_x\langle j\rangle^\sharp & \cong G_\Delta(\nabla_x)\langle j\rangle^\sharp \\ & \cong\Hom_{\mathcal{D}(\mathsf{gr}^\flat\Lambda)}(\Delta,\nabla_x)\langle j\rangle^\sharp \\  & \cong\bigoplus_{k\in\mathbb{Z}}\Hom_{\mathcal{D}(\mathsf{tgr}\Lambda)}(\Delta,\nabla_x\langle 0,k+j\rangle)\cong G_\Delta(\nabla_x\langle j\rangle^\sharp) \end{split} \]
\end{enumerate} \end{proof}

Let us describe the bigraded structure that we will consider on $\Gamma$ and on its $\Delta$-dual $\Gamma^\dagger$. Denote by $|\cdot|^\flat$ the $\Delta$-grading on $\Gamma$ and recall that, when defining the $\Delta$-grading, we denoted by $\Gamma_{[0]}$ the degree zero part of $\Gamma$ with respect to this grading. We can define another grading $|\cdot|^\sharp$ on $\Gamma$ such that the total grading correspond to the radical grading:
\[ |e_x|^\sharp=0 \quad \forall x\in I, \quad  |\alpha_k|^\sharp=\begin{cases} 0 \mbox{ if } k\neq 0 \\ 1 \mbox{ if } k=0. \end{cases}
\]
When considering the dual algebra $\Gamma^\dagger$, we will denote the $\ext$-grading by $|\cdot|^\flat$ (since it is induced by the $\Delta$-grading) and the grading induced by $|\cdot|^\sharp$ always by $|\cdot|^\sharp$. For every bigraded $\Gamma$-module (or $\Gamma^\dagger$ in the same way) $M$, we will denote by $M\langle i,j\rangle$ the bigraded module obtained by shifting $M$ of $i$ with respect to $|\cdot|^\flat$ and of $j$ with respect to $|\cdot|^\sharp$. Then we will denote by $|\cdot|^{tot}$ the total grading on $\Gamma$ (and on $\Gamma^\dagger$ similarly).

\subsection{\texorpdfstring{$\Delta$}{Delta}-Koszul dual of qh-covers}

Let ${\Gamma^\dagger}=\ext^*_\Gamma(\Delta,\Delta)$, so by Theorem \ref{delta dual} ${\Gamma^\dagger}$ is Koszul with respect to $D\Delta$ and $({\Gamma^\dagger},D\Delta)$ is the Koszul dual of $(\Gamma,\Delta)$. The  $|\cdot|^\flat$-degree zero part of $\Gamma^\dagger$ is $\End_\Gamma(\Delta)\simeq\End_{\Gamma_{[0]}}(\Delta)\simeq\Delta_{{\Gamma^\dagger}}$ considered as a right $\Gamma^\dagger$-module. We have the following corollary to Theorems \ref{delta koszul}, \ref{delta dual} and \ref{equivalences}:

\begin{cor}
There is an isomorphism 
\[\Gamma\cong\ext^*_{\Gamma^\dagger}(D\Delta,D\Delta)\]
as ungraded algebras. Moreover, if $\Gamma$ is given the $\Delta$-grading and ${\Gamma^\dagger}$ the $\ext$-grading, then there is an equivalence of triangulated categories $G_\Delta:\mathcal{D}^b(\mathsf{gr}^\flat\Gamma)\to \mathcal{D}^b(\mathsf{gr}^\flat{\Gamma^\dagger})$ 
which restricts to an equivalence  $G_\Delta:\mathcal{F}_{\mathsf{gr}^\flat\Gamma}(\Delta)\to \mathcal{L}^b({\Gamma^\dagger})$.
\end{cor}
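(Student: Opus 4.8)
The plan is to obtain the statement as a direct consequence of Theorems~\ref{delta dual} and~\ref{equivalences} applied to $\Lambda=\Gamma$ with $T=\Delta$, once we have checked that the hypotheses of those theorems hold in our situation. First I would recall from Theorem~\ref{delta koszul} that $\Gamma$, equipped with the $\Delta$-grading, is Koszul with respect to $\Delta=\Gamma_{[0]}$, and that $\gldim\Gamma_{[0]}<\infty$ (this was verified inside the proof of Theorem~\ref{delta koszul}, since $\Gamma_{[0]}$ decomposes as a product of type $A$ path algebras bound by $\alpha_0\alpha_0=0$). Hence Theorem~\ref{delta dual} applies verbatim: it gives that $\Gamma^\dagger=\ext^*_\Gamma(\Delta,\Delta)$ is Koszul with respect to $D\Delta_{\Gamma^\dagger}$, that $\gldim\Gamma^\dagger_{[0]}<\infty$, and that there is an isomorphism of graded algebras $\Gamma\simeq\ext^*_{\Gamma^\dagger}(D\Delta,D\Delta)$. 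Forgetting the grading yields the first assertion $\Gamma\cong\ext^*_{\Gamma^\dagger}(D\Delta,D\Delta)$ as ungraded algebras.

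For the equivalences, I would invoke Theorem~\ref{equivalences} with $\Lambda=\Gamma$ and $T=\Delta$: the functor $G_\Delta\colon\mathcal{D}(\mathsf{gr}^\flat\Gamma)\to\mathcal{D}(\mathsf{gr}^\flat\Gamma^\dagger)$ always restricts to an equivalence $\mathcal{F}_{\mathsf{gr}^\flat\Gamma}(\Delta)\to\mathcal{L}^b(\Gamma^\dagger)$, which is the second claimed restriction. To upgrade this to an equivalence of the bounded derived categories $\mathcal{D}^b(\mathsf{gr}^\flat\Gamma)\to\mathcal{D}^b(\mathsf{gr}^\flat\Gamma^\dagger)$, the extra hypotheses of the second half of Theorem~\ref{equivalences} must be checked: $\Gamma$ is Artinian (it is a finite-dimensional $k$-algebra, hence Artinian), $\Gamma^\dagger$ is Noetherian (again finite-dimensional over $k$, being $\ext^*_\Gamma(\Delta,\Delta)$ for $\Gamma$ of finite global dimension and $\Delta$ finitely generated, so the total $\ext$-algebra is finite-dimensional), and $\gldim\Gamma^\dagger<\infty$. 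The last point I would justify by noting that $\Gamma^\dagger$ is Koszul (with respect to $D\Delta$) and, being finite-dimensional with semisimple degree-zero part in the relevant (total) grading — or more directly, by Theorem~\ref{delta dual}(1) together with the fact that a Koszul algebra whose degree-zero part has finite global dimension has finite global dimension itself. With all three conditions in hand, Theorem~\ref{equivalences} delivers the triangulated equivalence $G_\Delta^b\colon\mathcal{D}^b(\mathsf{gr}^\flat\Gamma)\to\mathcal{D}^b(\mathsf{gr}^\flat\Gamma^\dagger)$.

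The only genuinely substantive point — and the one I expect to be the main obstacle — is verifying $\gldim\Gamma^\dagger<\infty$ (equivalently, that $\Gamma^\dagger$ is finite-dimensional and of finite global dimension), since everything else is a matter of quoting the cited theorems and observing that finite-dimensional $k$-algebras are automatically Artinian and Noetherian. I would handle this by combining Theorem~\ref{delta dual}(1) (which says $\gldim\Gamma^\dagger_{[0]}<\infty$ and $\Gamma^\dagger$ is $D\Delta$-Koszul) with the standard fact that Koszulity plus finite global dimension of the degree-zero part forces finite global dimension of the whole algebra; alternatively one can argue directly that since $\Gamma$ has finite global dimension and is standard Koszul and $\Delta$-Koszul, the $\ext$-algebra $\ext^*_\Gamma(\Delta,\Delta)$ is concentrated in finitely many cohomological degrees, so $\Gamma^\dagger$ is finite-dimensional, and then finite-dimensionality together with the Koszul property gives the global dimension bound. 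Once this is settled, the corollary is immediate.
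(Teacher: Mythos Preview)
Your overall structure is right --- the corollary is indeed a direct consequence of Theorems~\ref{delta dual} and~\ref{equivalences} once the hypotheses are in place --- and your verification that $\Gamma$ is Artinian and $\Gamma^\dagger$ is Noetherian (both being finite-dimensional over $k$) is fine. The gap is in your justification of $\gldim\Gamma^\dagger<\infty$. The ``standard fact'' you invoke, that a Koszul algebra (or $T$-Koszul algebra) whose degree-zero part has finite global dimension must itself have finite global dimension, is false: the dual numbers $k[x]/(x^2)$, or more generally any exterior algebra on a finite-dimensional vector space, is classically Koszul with semisimple degree-zero part but has infinite global dimension. Your alternative phrasing, that finite-dimensionality together with Koszulity yields a global dimension bound, fails for the same reason.

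The paper closes this gap differently: it observes that $\Gamma^\dagger=\ext^*_\Gamma(\Delta,\Delta)$ is \emph{directed}, because the extension algebra of standard modules over a quasi-hereditary algebra is always directed (citing \cite[Theorem 1.8(b)]{parshall1998}). Directed algebras are quasi-hereditary with simple standard modules, hence have finite global dimension. This is the missing ingredient you need; once you replace your Koszulity argument with this directedness observation, the rest of your outline goes through exactly as you describe.
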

\begin{proof}
Since $\Gamma$ with the $\Delta$-grading is Koszul with respect to $\Delta$ the isomorphism follows from Theorem \ref{delta dual}. By Theorem \ref{delta koszul} there is an equivalence $G_\Delta:\mathcal{D}^b(\mathsf{gr}^\flat\Gamma)\to \mathcal{D}^b(\mathsf{gr}^\flat{\Gamma^\dagger})$. Moreover since $\Gamma$ has finite global dimension, ${\Gamma^\dagger}$ is finite dimensional and it is directed since the extension algebra of standard modules is always directed \cite[Theorem 1.8(b)]{parshall1998}. Then ${\Gamma^\dagger}$, being directed, has finite global dimension and, since obviously $\Delta\in\mathcal{D}^b(\mathsf{gr}^\flat\Gamma)$, the category $\mathcal{F}_{\mathsf{gr}^\flat\Gamma}(\Delta)$ is a subcategory of $\mathcal{D}^b(\mathsf{gr}^\flat\Gamma)$. Then the claims follow from Theorem \ref{equivalences}. 
\end{proof}

The following lemma gives a useful description of the graded parts of ${\Gamma^\dagger}=\ext^*_\Gamma(\Delta,\Delta)$ when $\Gamma$ is the qh-cover of a higher zigzag algebra.

\begin{lemma}\label{gamma totgrad}
Let $x,y$ be two vertices in the quiver of $\Gamma$ and $d=d(x,y)$ their distance in the quiver. If $\ext_{\mathsf{gr}^\sharp\Gamma}^i(\Delta_x,\Delta_y\langle j \rangle^\sharp)\neq 0$ for some $i,j\ge 0$, then $i=d-nj$. As a consequence we have that
\[ |\ext_\Gamma^i(\Delta_x,\Delta_y\langle j \rangle^\sharp) |^{tot}=d-j(n-1). \]
\end{lemma}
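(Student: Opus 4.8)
The plan is to peel off the two radical layers of $\Delta_y$ one at a time and propagate each of them through the minimal projective resolution of $\Delta_x$, exploiting that this resolution is linear for both the radical grading and the $\Delta$-grading. If $\Delta_y$ is simple there is nothing to peel off; otherwise, by Lemma \ref{stand modules} there is a unique arrow $y\xto{\alpha_0}z$, and in $\mathsf{gr}^\sharp\Gamma$ the module $\Delta_y$ is an extension of its top $S_y$, sitting in $\sharp$-degree $0$, by its socle $S_z$, sitting in $\sharp$-degree $1$ (the socle is spanned by $\alpha_0$, and $|\alpha_0|^\sharp=1$, $|\alpha_0|^\flat=0$). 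Applying $\ext_{\mathsf{gr}^\sharp\Gamma}^\bullet(\Delta_x,-\langle j\rangle^\sharp)$ to this short exact sequence produces a long exact sequence, so the first step is to compute the graded $\ext$-groups $\ext_{\mathsf{gr}^\sharp\Gamma}^i(\Delta_x,S_w\langle l\rangle^\sharp)$ with simple second argument $w\in\{y,z\}$, together with the $\sharp$-degree $l$ in which each $S_w$ sits, and to check that the connecting homomorphisms vanish.

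For these I would use the minimal projective resolution $P^\bullet\to\Delta_x$: it is linear for the radical grading $|\cdot|^{tot}$ because $\Gamma$ is standard Koszul, and linear for the $\Delta$-grading $|\cdot|^\flat$ by Proposition \ref{grading prop}(3); moreover, as observed in the proof of that proposition, its differentials are given by arrows $\alpha_k$ with $k\neq 0$, which are homogeneous of $\sharp$-degree $0$. Hence every $P^i$ is generated in bidegree $(i,0)$, so $\Hom_{\mathsf{gr}^\sharp\Gamma}(P^i,S_w\langle l\rangle^\sharp)=0$ unless $l=0$, and for $l=0$ its dimension is $\dim\ext_\Gamma^i(\Delta_x,S_w)$, which by Proposition \ref{grading prop}(1) (with Lemma \ref{homprop}) is non-zero only when $x\leq w$ and $i=d(x,w)$. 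Feeding this into the long exact sequence, a non-zero class in $\ext_{\mathsf{gr}^\sharp\Gamma}^i(\Delta_x,\Delta_y\langle j\rangle^\sharp)$ must come either from the top $S_y$ — and then $j=0$ and $i=d(x,y)=d$ — or from the socle $S_z$ — and then $j=1$ and $i=d(x,z)$. The connecting map $\ext_{\mathsf{gr}^\sharp\Gamma}^i(\Delta_x,S_y\langle\cdot\rangle^\sharp)\to\ext_{\mathsf{gr}^\sharp\Gamma}^{i+1}(\Delta_x,S_z\langle\cdot\rangle^\sharp)$ vanishes, since its source is non-zero only for $i=d(x,y)$ and its target only for $i+1=d(x,z)=d(x,y)-n$, which are incompatible for $n\geq 1$.

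Finally I would pin down $d(x,z)$ in the second case. A minimal path $x\to y$ avoiding $\alpha_0$ uses each arrow $\alpha_k$, $k\neq 0$, say $c_k\geq 0$ times, with $\sum_k c_k=d$ and $y-x=\sum_{k=1}^n c_k f_k$; since $z=y+f_0$ and $f_0+f_1+\cdots+f_n=0$, this gives $z-x=\sum_{k=1}^n(c_k-1)f_k$. If some $c_k=0$ then $x\not<z$, so $\ext_\Gamma^\bullet(\Delta_x,S_z)=0$ by Lemma \ref{homprop} and the socle layer contributes nothing; otherwise the path using each $\alpha_k$, $k\neq0$, exactly $c_k-1$ times realises $d(x,z)=\sum_k(c_k-1)=d-n$. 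In every case a non-zero class in $\ext_{\mathsf{gr}^\sharp\Gamma}^i(\Delta_x,\Delta_y\langle j\rangle^\sharp)$ therefore satisfies $i=d-nj$; viewed in $\Gamma^\dagger$ it carries $|\cdot|^\flat$-degree $i$ and $|\cdot|^\sharp$-degree $j$, so its total degree is $i+j=(d-nj)+j=d-j(n-1)$. I expect the delicate points to be the graded bookkeeping — the $\sharp$-degrees of the two radical layers and the vanishing of the connecting homomorphism — and the combinatorial identity $d(x,z)=d-n$, which is where the cyclic structure of the zigzag quiver is used.
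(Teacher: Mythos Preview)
Your argument is correct and in fact somewhat cleaner than the paper's. The paper proceeds by induction on $d=d(x,y)$: given a non-zero $f:P^i(x)\to\Delta_y$, it distinguishes whether $f$ is epi (then $P_y$ is a summand of $P^i$, forcing $i=d$ and $j=0$) or not (then $f$ factors through the morphism $\Delta_{y'}\to\Delta_y$, where $y'$ is the socle vertex, and one applies the inductive hypothesis to the lift $g:P^i(x)\to\Delta_{y'}$). By contrast you avoid induction entirely: you use the short exact sequence $0\to S_z\langle1\rangle^\sharp\to\Delta_y\to S_y\to 0$, compute $\ext^i_{\mathsf{gr}^\sharp}(\Delta_x,S_w\langle l\rangle^\sharp)$ once and for all via the observation that the minimal resolution $P^\bullet$ of $\Delta_x$ sits in $\sharp$-degree $0$ (so only $l=0$ survives, and then $i=d(x,w)$ by Proposition~\ref{grading prop}(1)), and feed this into the long exact sequence. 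Your combinatorial identity $d(x,z)=d-n$ replaces the paper's inductive step.

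What your approach buys is transparency: it makes visible that only two values of $(i,j)$ can occur, namely $(d,0)$ and $(d-n,1)$, because $\Delta_y$ has only two composition factors and the resolution is rigid in the $\sharp$-direction. In the paper's inductive argument this is slightly obscured --- formally the induction allows $g\in\ext^i(\Delta_x,\Delta_{y'}\langle k\rangle^\sharp)$ for any $k\ge0$, but since $g$ is declared epi one actually has $k=0$, so the induction never goes deeper than one step. Your proof makes this explicit. The paper's approach, on the other hand, stays entirely within standard modules and never needs the auxiliary computation with simples or the combinatorics of $f_0+f_1+\cdots+f_n=0$; the reduction $d(x,y')=d-n$ is asserted rather than derived.

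One small caution: your bookkeeping for the socle shift (``then $j=1$'') relies on the convention that a morphism raising $\sharp$-degree by $1$ lies in $\Hom_{\mathsf{gr}^\sharp}(-,\,-\langle1\rangle^\sharp)$. This is the convention the paper uses in this section (see the displayed inclusion $\Hom_\Gamma(\Delta_{y'},\Delta_y\langle1\rangle^\sharp)\cdot\ext^{d'-nk}_\Gamma(\cdots)$ in the paper's proof), so you are consistent with it; just be aware that it does not literally agree with the shift formula $(M\langle j\rangle)_i=M_{i-j}$ stated in the introduction.
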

\begin{proof}
We proceed by induction on $d=d(x,y)$. Recall that $d(x,y)$ is the length of a path from $x$ to $y$ not involving any arrow $\alpha_0$, if such a path exists, and it is $\infty$ otherwise. The distance between two vertices is infinite precisely when they are not comparable in the partial order on the set of vertices. But this can not happen under our assumptions since, by Lemma \ref{homprop}(2), $\ext_\Gamma^i(\Delta_x,\Delta_y)\neq 0$ implies $x< y$, for any $i>0$.

Note first that if $d=0$ then $x=y$ and $\ext^*_\Gamma(\Delta_x,\Delta_x)\cong\Hom_\Gamma(\Delta_x,\Delta_x)$ by quasi-heredity.

Suppose $d=1$: by Lemma \ref{homprop}, if $x>y$ then $\ext^i_\Gamma(\Delta_x,\Delta_y)=0$, so we can assume $x<y$. In this case $\ext^i_\Gamma(\Delta_x,\Delta_y)\neq 0$ only if $i\leq d=1$. We must have $i\neq 0$ since $d=1$ implies that there exists an arrow $x\xto{\alpha_k}y$ with $k\neq 0$. So, since $n>1$, there can not be an arrow $y\xto{\alpha_0}x$ and $\Hom_\Gamma(\Delta_x,\Delta_y)=0$. Therefore $j=0$ and $i=d=1$.

Assume now $d>1$ and, for any vertex $y'$ such that $d'=d(x,y')<d$,  if $\ext^i_\Gamma(\Delta_x,\Delta_{y'}\langle j \rangle^\sharp)\neq 0$ then   $i=d'-nj$.
Let $P^\bullet(x)$ be a projective resolution of $\Delta_x$, linear with respect to the $\Delta$-grading on $\Gamma$, and consider a morphism $f:P^i(x)\to\Delta_y$ that gives a non-zero homogeneous element in $\ext^i_\Gamma(\Delta_x,\Delta_y\langle j \rangle^\sharp)$. If $(S_y,S_{y'})$ are the composition factors of $\Delta_y$ then $d'=d(x,y')=d-n$.
We distinguish two cases:
\begin{itemize}
\item If $f$ is epi, then $P_y$ is a direct summand of $P^i(x)$ and so $i=d$ and $j=0$.
\item If $f$ is not epi then it factors through the morphism $\Delta_{y'}\to\Delta_y$ and we have the following commutative diagram:
\[\xymatrix{P^i(x) \ar[d]_g \ar[dr]^f & \\ \Delta_{y'} \ar[r] & \Delta_y} \] where $g$ is non-zero and epi, hence it belongs to $\ext^i_\Gamma(\Delta_x,\Delta_{y'})$. Since $d'=d-n<d$, by induction we have that $g\in\ext^i_\Gamma(\Delta_x,\Delta_{y'}\langle k \rangle^\sharp)$ for $k\ge 0$ such that $i=d'-nk$. Therefore we have
 
\[ f\in \Hom_\Gamma(\Delta_{y'},\Delta_y\langle 1 \rangle^\sharp)\cdot \ext^{d'-nk}_\Gamma(\Delta_x,\Delta_{y'}\langle k \rangle^\sharp)\subseteq\ext^{d'-nk}_\Gamma(\Delta_x,\Delta_{y}\langle k+1 \rangle^\sharp)\]
and we know that
\[\ext^{d'-nk}_\Gamma(\Delta_x,\Delta_{y}\langle k+1 \rangle^\sharp)=\ext^i_\Gamma(\Delta_x,\Delta_y\langle j \rangle^\sharp), \]
so $i=d'-nk=d-n-nk=d-n(k+1)$.
\end{itemize} \end{proof}

Denote by $\mathsf{tgr}{\Gamma^\dagger}$ the category of graded ${\Gamma^\dagger}$-modules with respect to the total grading.

\begin{lemma}\label{gamma morph}
If $Q_x\langle s\rangle\to Q_y$ is a non-zero morphism between indecomposable projective modules in $\mathsf{tgr}{\Gamma^\dagger}$, then $s=d-j(n-1)$ for some $j\ge 0$ and $d=d(x,y)$.
\end{lemma}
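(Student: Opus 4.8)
The statement is essentially the ``dual'' translation of Lemma \ref{gamma totgrad} into the language of projective ${\Gamma^\dagger}$-modules. Recall that the indecomposable projective ${\Gamma^\dagger}$-module $Q_y$ is the projective cover $G_\Delta(\Delta_y)$ of the simple module $S_y$, and that morphisms between indecomposable projectives in $\mathsf{gr}^\flat{\Gamma^\dagger}$ are computed by $\Hom$-spaces which, via the equivalence $G_\Delta$, correspond to $\ext$-groups over $\Gamma$. Concretely, a non-zero morphism $Q_x \to Q_y$ corresponds (up to the usual identifications) to a non-zero element of $\ext^*_\Gamma(\Delta_x,\Delta_y)$, and the two gradings on ${\Gamma^\dagger}$ are exactly the ``homological'' grading $|\cdot|^\flat$ and the grading $|\cdot|^\sharp$ inherited from the $|\cdot|^\sharp$-grading on $\Gamma$. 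So the first step is to make precise the dictionary: a homogeneous morphism $Q_x\langle s\rangle^{tot}\to Q_y$ of total degree matching the source corresponds to a non-zero class in $\ext^i_{\mathsf{gr}^\sharp\Gamma}(\Delta_x,\Delta_y\langle j\rangle^\sharp)$ for some $i,j\ge0$ with $i$ the $|\cdot|^\flat$-degree and $j$ the $|\cdot|^\sharp$-shift.

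\textbf{Main steps.} First I would invoke the graded version of Theorem \ref{delta dual} together with Proposition \ref{delta koszul propert} to identify $\Hom_{\mathsf{tgr}{\Gamma^\dagger}}(Q_x\langle s\rangle, Q_y)$ with an appropriate bigraded piece of $\ext^*_\Gamma(\Delta_x,\Delta_y)$. The key point is that under the Koszul duality functor $G_\Delta$ a projective ${\Gamma^\dagger}$-module corresponds to a standard $\Gamma$-module and the total grading on ${\Gamma^\dagger}$ is assembled from the bigrading $V_{i,j}=\ext^i_{\mathsf{gr}^\sharp\Gamma}(\Delta_x,\Delta_y\langle j\rangle^\sharp)$ placed in total degree $i+j$. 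So a non-zero morphism of total degree $s$ forces the existence of $i,j\ge0$ with $\ext^i_{\mathsf{gr}^\sharp\Gamma}(\Delta_x,\Delta_y\langle j\rangle^\sharp)\neq0$ and $s=i+j$. Then I would simply apply Lemma \ref{gamma totgrad}: its conclusion is precisely that under these hypotheses $i=d-nj$ where $d=d(x,y)$, and hence
\[ s = i+j = (d-nj)+j = d - j(n-1), \]
which is exactly the claimed formula. The only extra bookkeeping is to check the boundary cases ($d=0$, i.e. $x=y$, where $j=0$ and $s=0$; and the non-comparable case, which cannot occur since a non-zero morphism between the projectives forces $\ext^i_\Gamma(\Delta_x,\Delta_y)\neq0$ and thus $x\le y$ by Lemma \ref{homprop}(2)), but these are already handled inside the proof of Lemma \ref{gamma totgrad}.

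\textbf{Main obstacle.} The genuinely delicate part is not the arithmetic but the careful identification of the total grading on $\mathsf{tgr}{\Gamma^\dagger}$ with the double-index bookkeeping of $\ext^i_{\mathsf{gr}^\sharp\Gamma}(\Delta_x,\Delta_y\langle j\rangle^\sharp)$, i.e. verifying that a morphism between the total-graded projectives $Q_x\langle s\rangle\to Q_y$ really does ``see'' both the cohomological degree $i$ (contributing via $|\cdot|^\flat$) and the internal $|\cdot|^\sharp$-shift $j$, and that these add up to $s$ with the correct sign conventions coming from Proposition \ref{delta koszul propert}(c)--(d). Once that translation is in place — which is essentially forced by the construction of the total grading on ${\Gamma^\dagger}$ recalled in this section — the statement is an immediate corollary of Lemma \ref{gamma totgrad}, so I would keep the proof short: one paragraph setting up the correspondence, then a one-line appeal to the previous lemma.
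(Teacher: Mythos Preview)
Your proposal is correct and follows essentially the same route as the paper: identify a non-zero morphism $Q_x\langle s\rangle\to Q_y$ with a homogeneous element of $\ext^i_{\mathsf{gr}^\sharp\Gamma}(\Delta_x,\Delta_y\langle j\rangle^\sharp)$, then invoke Lemma~\ref{gamma totgrad} to get $s=i+j=d-j(n-1)$. The paper's write-up is even shorter because it bypasses the Koszul duality functor $G_\Delta$ entirely and just uses the elementary fact that morphisms between indecomposable projectives $e_x\Gamma^\dagger\to e_y\Gamma^\dagger$ are given by left multiplication by elements of $e_y\Gamma^\dagger e_x=\ext^*_\Gamma(\Delta_x,\Delta_y)$; your more functorial framing via $G_\Delta$ and Proposition~\ref{delta koszul propert} is not wrong, but it is heavier machinery than needed here.
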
 
\begin{proof} Any non-zero morphism $Q_x\langle s\rangle\to Q_y$ is given by left multiplication by an element in $\ext^i_\Gamma(\Delta_x,\Delta_{y}\langle j \rangle^\sharp)\subseteq\ext^*_\Gamma(\Delta_x,\Delta_{y})$ whose total grading is $d-j(n-1)$ by Lemma \ref{gamma totgrad}. Hence $s=d-j(n-1)$. \end{proof}

We are ready to prove the following:

\begin{thm} \label{dual is koszul}
Let $\Gamma$ be our qh-cover of an n-zigzag algebra with $n>1$, and let $\Gamma^\dagger=\ext^*_\Gamma(\Delta,\Delta)$. Then $\Gamma^\dagger$ endowed with the total grading $|\cdot|^{tot}$ is Koszul in the classical sense.   
\end{thm}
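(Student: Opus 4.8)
The plan is to show that $\Gamma^\dagger$ with the total grading $|\cdot|^{tot}$ admits a linear projective resolution of its degree-zero part $(\Gamma^\dagger)_0$, or equivalently (via the characterisation recalled in Section 3) that $(\Gamma^\dagger)_0$ has a graded projective resolution $\cdots \to R^1 \to R^0 \to (\Gamma^\dagger)_0 \to 0$ with $R^i$ generated in $|\cdot|^{tot}$-degree $i$. By Theorem \ref{delta dual}, $\Gamma^\dagger$ is Koszul with respect to $D\Delta$ in the $|\cdot|^\flat$-grading, so there is already a $|\cdot|^\flat$-linear resolution of $D\Delta_{\Gamma^\dagger}$ by projectives; the strategy is to track what the second grading $|\cdot|^\sharp$ (equivalently, the total grading) does to the terms of this resolution, using the precise numerical control supplied by Lemma \ref{gamma totgrad} and Lemma \ref{gamma morph}.

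First I would use the equivalence $G_\Delta : \mathcal{F}_{\mathsf{gr}^\flat\Gamma}(\Delta) \to \mathcal{L}^b(\Gamma^\dagger)$ from the Corollary, together with Proposition \ref{simple costand}: the simple $\Gamma^\dagger$-module $S_x$ is $G_\Delta(\nabla_x)$, and $G_\Delta$ sends the $|\cdot|^\sharp$-shift on $\nabla$-modules to the $|\cdot|^\sharp$-shift on $\Gamma^\dagger$-modules (with a corresponding $[-j]$ on the $\flat$/homological side). So the projective resolution of $(\Gamma^\dagger)_0$ in $\mathsf{tgr}\Gamma^\dagger$ is the image under $G_\Delta$ of a $\nabla$-filtered resolution of $D\Lambda$ (or of $D\Delta$) in $\mathsf{gr}^\flat\Gamma$, whose terms are sums of shifts of $\nabla_x$'s. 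The key point is then purely combinatorial: a summand $Q_x\langle s\rangle$ in homological degree $i$ of this resolution corresponds, by Lemma \ref{gamma morph} applied iteratively along the resolution, to data where the total degree $s = d - j(n-1)$ and the homological degree $i = d - nj$ for $d = d(x,y)$ relative to the starting vertex $y$. From $i = d - nj$ one gets $d = i + nj$, hence $s = i + nj - j(n-1) = i + j$. Therefore in each homological step of the $G_\Delta$-image resolution, every generator sits in total degree equal to its homological degree — which is exactly linearity with respect to $|\cdot|^{tot}$.

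To make this rigorous I would argue inductively on the homological degree of the resolution, rather than invoking a closed formula on the nose: start with $R^0 = Q_y$ in total degree $0$ covering $S_y$, and at each stage observe that the syzygy $\Omega^{i}$ is generated in total degree $i$ because the maps $R^i \to R^{i-1}$ are, by Lemma \ref{gamma morph}, left multiplication by elements whose total degree is exactly $1$ higher — this is where one uses that a single homological step of the $\flat$-linear resolution corresponds to $d$ increasing by $1$ when $j=0$ (an arrow $\alpha_k$, $k\neq0$) contributing total degree $1$, or to $d$ increasing by $n$ when $j$ increases by $1$ (passing through the socle of a $\Delta_y$, which carries $|\cdot|^\sharp$-degree, also contributing total degree $1$ net after the arithmetic $n - (n-1) = 1$). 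The $\flat$-linearity of the Koszul resolution of $D\Delta$ guarantees there are no other contributions. One then concludes that $R^i$ is generated in total degree $i$, i.e. $(\Gamma^\dagger)_0$ is linear over $(\Gamma^\dagger, |\cdot|^{tot})$, so $\Gamma^\dagger$ is Koszul in the classical sense.

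The main obstacle I anticipate is bookkeeping the interaction of the two gradings correctly through $G_\Delta$: one must be careful that Proposition \ref{delta koszul propert}(c) introduces a homological shift $[-j]$ alongside the $\langle -j\rangle$ shift when translating $|\cdot|^\sharp$-shifts across the duality, and that the $\ext$-grading on $\Gamma^\dagger$ is identified with $|\cdot|^\flat$ (as noted in Section \ref{sec delta kosz}), so that ``total degree of a generator in homological degree $i$'' on the $\Gamma^\dagger$ side really is $i + (\text{its }|\cdot|^\sharp\text{-degree})$. Once the identity $i + nj - j(n-1) = i + j$ is set up against the correct definitions, the linearity is immediate; verifying that Lemma \ref{gamma morph} controls \emph{all} maps in the resolution (not merely maps between indecomposable projectives in isolation) is the step that needs the inductive syzygy argument above rather than a one-line citation. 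The hypothesis $n>1$ is used exactly to keep $n-1 \geq 1$ so that distinct values of $j$ give distinct total degrees and the counting does not degenerate.
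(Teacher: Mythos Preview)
Your instinct to transfer the problem to the $\Gamma$ side via $G_\Delta$ is exactly right, and your numerology $(d-nj)+j=d-j(n-1)$ is the content of Lemma~\ref{gamma totgrad}. But the execution has a genuine gap. The inductive step ``$\Omega^{i}$ is generated in total degree $i$ because the differentials $R^i\to R^{i-1}$ are left multiplication by elements of total degree $1$'' is a non-sequitur: that statement about the differentials says only that $\Gamma^\dagger$ is generated in degree $1$ over $(\Gamma^\dagger)_0$, which holds for any quadratic algebra, Koszul or not. Lemma~\ref{gamma morph} describes morphisms between indecomposable projectives, i.e.\ elements of $\Gamma^\dagger$ itself; it gives no direct control over where the generators of the syzygies sit. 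There is also a bookkeeping slip: since $G_\Delta(\Delta_x)\cong Q_x$ and $G_\Delta(\nabla_x)\cong S_x$, a projective resolution of $S_x$ over $\Gamma^\dagger$ corresponds under $G_\Delta$ to a complex of standard modules quasi-isomorphic to $\nabla_x$, not to a ``$\nabla$-filtered resolution of $D\Lambda$''. Finally, the formula $i=d-nj$ in Lemma~\ref{gamma totgrad} refers to the $\flat$-degree of an element of $\Gamma^\dagger$, not to homological degree in a resolution of a simple $\Gamma^\dagger$-module; conflating the two is what makes the counting look automatic.

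The paper avoids building the resolution altogether and instead uses the $\ext$-characterisation of Koszulity. One shows directly that $\ext^s_{\mathsf{tgr}\Gamma^\dagger}(S_x,S_y\langle i,j\rangle)\neq 0$ forces $i+j=s$: using $S_x\cong G_\Delta(\nabla_x)$ and Proposition~\ref{delta koszul propert}(c), the equivalence $G_\Delta$ identifies this group with $\ext^{s-i}_{\mathsf{tgr}\Gamma}(\nabla_x,\nabla_y\langle -i,j\rangle)$. Now the crucial input is not Lemma~\ref{gamma morph} but the explicit injective coresolution $0\to\nabla_y\to I_y\to I_{y_1}\to\cdots\to I_{y_k}\to 0$ from the proof of Proposition~\ref{quasi hered}. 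Its maps are all $\alpha_0\cdot$, so it is cogenerated in $|\cdot|^\flat$-degree $0$ and is \emph{linear} in $|\cdot|^\sharp$. A nonzero map $\nabla_x\to I_{y_{s-i}}\langle -i,j-(s-i)\rangle$ then forces $j=s-i$ because $\nabla_x$ lives in $|\cdot|^\sharp$-degree $0$. This is a one-line computation once the coresolution is written down, and it bypasses the need to control syzygies over $\Gamma^\dagger$. Lemmas~\ref{gamma totgrad} and~\ref{gamma morph} are used afterwards, for the quiver presentation of $\Gamma^\dagger$, not for this Koszulity statement.
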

\begin{proof} 
We want to show that, for any two simple $\Gamma^\dagger$-modules $S_x,S_y$, if $\ext_{\mathsf{tgr}{\Gamma^\dagger}}^s(S_x,S_y\langle i,j\rangle)\neq 0$ then $i+j=s$. First recall that $S_x\cong G_\Delta(\nabla_x)$ by Proposition \ref{simple costand}. Moreover
\[ S_x\langle i,j\rangle\cong G_\Delta(\nabla_x)\langle i,j\rangle\cong G_\Delta(\nabla_x\langle-i,j\rangle[-i]) \]
by Proposition \ref{simple costand} and Proposition \ref{delta koszul propert}, (c).
Then we have:
\[\begin{split}\ext^s_{\mathsf{tgr}{\Gamma^\dagger}}(S_x,S_y\langle  i,j\rangle) & \cong \Hom_{\mathcal{D}(\mathsf{tgr}{\Gamma^\dagger})}(S_x,S_y\langle i,j\rangle[s]) \\ & \cong \Hom_{\mathcal{D}(\mathsf{tgr}\Gamma)}(\nabla_x,\nabla_y\langle -i,j\rangle[s-i]) \\ & \cong \ext^{s-i}_{\mathsf{tgr}\Gamma}(\nabla_x,\nabla_y\langle -i,j\rangle) \end{split}. \]

Recall also that from Proposition \ref{quasi hered}, an (ungraded) injective coresolution of $\nabla_y$ is:
\[0\to \nabla_y\to I_y \to I_{y_1} \to \cdots \to I_{y_k}=\nabla_z\to 0 \]
such that 
\[z\xto{\alpha_0} \cdots \xto{\alpha_0} y_1 \xto{\alpha_0} y \]
is a subquiver of the quiver of $\Gamma$.

Each (indecomposable) injective module in such a coresolution is cogenerated in $|\cdot|^\flat$-degree zero since applying the duality $D=\Hom_k(-,k)$ the corresponding maps between projective $\Gamma^{op}$-modules are given by right multiplication by $\alpha_0^*$ that are in degree zero. Moreover, for the same reason, we see that the coresolution is also linear with respect to $|\cdot|^\sharp$:
\[0\to \nabla_y\langle -i,j\rangle \to I_y\langle -i,j\rangle \to I_{y_1}\langle -i,j-1\rangle \to \cdots \to I_{y_k}\langle -i,j-k\rangle=\nabla_z\langle -i,j-k\rangle\to 0 \]

Hence if $\nabla_x\to I_{y_{s-i}}\langle -i,j-s+i\rangle$ is a non-zero map that gives a non-trivial element in the $\ext$-group, we must have $j=s-i$ since $\nabla_x$ is concentrated in $|\cdot|^\sharp$-degree zero. 

\end{proof}

\subsection{Presentation of \texorpdfstring{$\Gamma^\dagger$}{gamma dual} as bound quiver algebra} Let $(Q^{\Gamma^\dagger}_0,Q^{\Gamma^\dagger}_1)$ be the quiver of ${\Gamma^\dagger}$. Clearly the set of vertices $Q^{\Gamma^\dagger}_0$ is the same as the set of vertices of the quiver of $\Gamma$ and we will index this set always by $I$. We know that ${\Gamma^\dagger}$ is Koszul with respect to the total grading so ${\Gamma^\dagger}$ is generated by elements of degree one over its semisimple subalgebra in degree zero. Then the arrows of the quiver of ${\Gamma^\dagger}$ can be divided in two spaces: \begin{enumerate}
\item Arrows in $\ext$-degree zero: they correspond to the generators of $\Hom_\Gamma(\Delta_x,\Delta_y)$ whenever we have an arrow $y\xto{\alpha_0}x$ in the quiver of $\Gamma$. This $\Hom$-space is clearly one dimensional and gives us an arrow $a_0:y\to x$.

\item Arrows in $\ext$-degree one: they correspond to the generators of $\ext^1_\Gamma(\Delta_x,\Delta_y)$ that do not factor through a morphism as in point (1). Suppose that $\ext^1_\Gamma(\Delta_x,\Delta_y)\neq 0$ and let $P^\bullet\to\Delta_x$ be a (graded linear) projective resolution of $\Delta_x$ (note that $P^0=P_x$). The first $\ext$-space is the first cohomology group of the complex $\Hom_\Gamma(P^\bullet,\Delta_y)$ so its elements are equivalence classes of morphisms $P^1\to\Delta_y$. But since standard modules are concentrated in only one $\Delta$-degree and the differentials of $P^\bullet$ are in $\Delta$-degree one, the cohomology classes correspond to the $\Hom$-spaces. We know that $\Delta_y$ is either the simple module $S_y$ or, in case there exists an arrow $y\xto{\alpha_0}z$, it is uniserial with radical length two and composition factors $S_y(=\modtop\Delta_y)$ and $S_z(=\soc\Delta_y)$. If $\im(P^1\to\Delta_y)=S_z$ then the morphism factors through $\Delta_z$ hence is not irreducible; we have a (unique up to scalar multiplication) irreducible morphism $f:P^1\to\Delta_y$ if and only if $P^1=P'\oplus P_y$ and $f=0\oplus(P_y\onto\Delta_y)$. Therefore we have an arrow in $\ext$-degree one $a_i:y\to x\in Q_1^{\Gamma^\dagger}$ whenever there is an arrow $\alpha_i:x\to y\in Q^\Gamma_1$ for $i\neq 0$, since this happens if and only if the linear projective resolution of $\Delta_x$ is the following:
\[\cdots \to P^2 \to P^1=P'\oplus P_y \xto{[g,\alpha_i\cdot]} P_x\onto \Delta_x \]
for some $g$ in degree one.
\end{enumerate}

This means that $\ext^1_\Gamma(\Delta_x,\Delta_y)$ decomposes, as a $k$-vector space, in the direct sum of two at most one-dimensional vector spaces $\mathcal{U}\oplus\mathcal{V}$ where $\mathcal{U}=\ext_\Gamma^1(\Delta_x,\Delta_y)_0$ is generated by an irreducible morphism and $\mathcal{V}=\ext_\Gamma^1(\Delta_x,\Delta_y)_1$ is generated by a morphism that factors through a morphism between standard modules. 
\begin{exm}
Let $Z=Z^{(2,3)}$ and remember the quiver of $\Gamma$ from Example \ref{exm qhcover}. Then the quiver of ${\Gamma^\dagger}=\ext^*_\Gamma(\Delta,\Delta)$ is obtained by the quiver of $\Gamma$ by keeping the same arrows in $\Delta$-degree zero and reversing the arrows in $\Delta$-degree one:
\[
\xymatrix{ & & & 7\ar[ld]^{a_1} & & & \\ 
               & & 4 \ar[ld]^{a_1}  & & 8\ar[ul]^{a_2}\ar[ld]^{a_1}\ar[ll]_{a_0} & &  \\
          & 2 \ar[ld]^{a_1} & & 5\ar[ul]^{a_2} \ar[ld]^{a_1}\ar[ll]_{a_0} & & 9\ar[ll]_{a_0}\ar[ld]^{a_1}\ar[ul]^{a_2} &  \\
          1  & & 3 \ar[ul]^{a_2}\ar[ll]_{a_0} & & 6 \ar[ul]^{a_2} \ar[ll]_{a_0} & & 0\ar[ll]_{a_0}\ar[ul]^{a_2}  }
\]
\end{exm}

By Theorem \ref{dual is koszul}, the ideal of relations of ${\Gamma^\dagger}$ is generated by homogeneous elements of degree two (with respect to the total grading). Then we need to find all the degree two relations of ${\Gamma^\dagger}$.

\begin{prop} \label{dual relations}
Let $(Q^{\Gamma^\dagger}_0,Q_1^{\Gamma^\dagger})$ be the quiver of ${\Gamma^\dagger}$.
\begin{enumerate}[(I)]
\item There are quadratic commutativity relations given by:    
      \begin{enumerate}[(a)]
      \item For any relation $\alpha_i\alpha_j=\alpha_j\alpha_i$ with $i,j\neq 0$ in $\Gamma$: $\vcenter{\xymatrix@=1em{ & y \ar[dr]^{\alpha_j} & \\
                       x \ar[ur]^{\alpha_i}\ar[dr]_{\alpha_j}\ar@{.}[rr] & & z\\
                        & w \ar[ur]_{\alpha_i} &  }}$, in ${\Gamma^\dagger}$ we have the relation $a_ia_j=a_ja_i$: $\vcenter{\xymatrix@=1em{ & y \ar[dl]_{a_i} & \\
                       x & & z \ar[ul]_{a_j}\ar[dl]^{a_i}\ar@{.}[ll]\\
                        & w \ar[ul]^{a_j} &  }}$.
       \item For any relation $\alpha_0\alpha_i=\alpha_i\alpha_0$ in $\Gamma$: $\vcenter{\xymatrix@=1em{ & y \ar[dr]^{\alpha_i} & \\
                       x \ar[ur]^{\alpha_0}\ar[dr]_{\alpha_i}\ar@{.}[rr] & & z\\
                        & w \ar[ur]_{\alpha_0} &  }}$, in ${\Gamma^\dagger}$ we have the relation $a_ia_0=a_0a_i$: $\vcenter{\xymatrix@=1em{ & y & \\
                       x\ar[ur]^{a_0} & & z \ar[ul]_{a_i} \\
                        & w \ar[ul]^{a_i} \ar[ur]_{a_0}\ar@{.}[uu] &  }}$.
      \end{enumerate}                  
\item There are quadratic monomial relations given by:
      \begin{enumerate}[(a)]
      \item $a_0a_0=0$
      \item $a_ia_j=0$ for any $i,j\neq 0$ such that $\alpha_i\alpha_j$ is not defined in the quiver of $\Gamma$.
      \end{enumerate}       
      
\end{enumerate}
\end{prop}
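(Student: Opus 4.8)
The plan is to use that $\Gamma^\dagger$, equipped with the total grading $|\cdot|^{tot}$, is Koszul (Theorem \ref{dual is koszul}) and hence quadratic, so that $\Gamma^\dagger\cong kQ^{\Gamma^\dagger}/(\mathcal{R})$ for a space $\mathcal{R}\subseteq kQ_2^{\Gamma^\dagger}$ of quadratic relations. Two points then remain. First, one checks that each of (I)(a), (I)(b), (II)(a), (II)(b) actually holds in $\Gamma^\dagger=\ext^*_\Gamma(\Delta,\Delta)$, so that the subspace they generate is contained in $\mathcal{R}$. Second, one checks that this containment is an equality; since both sides are quadratic, this reduces to a degree-two count: for every ordered pair of vertices $x,z$, the number of length-two paths from $x$ to $z$ in $Q^{\Gamma^\dagger}$ minus the number of independent displayed relations among them must equal $\kdim e_z(\Gamma^\dagger)_2 e_x$.

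For the first point, recall from the discussion preceding the statement that an arrow $a_0$ of $Q^{\Gamma^\dagger}$ is a genuine homomorphism of standard $\Gamma$-modules (bidegree $(0,1)$), that an arrow $a_i$ with $i\ne 0$ is an irreducible class in $\ext^1_\Gamma(\Delta_x,\Delta_y)$ (bidegree $(1,0)$), and that multiplication in $\ext^*_\Gamma(\Delta,\Delta)$ is the Yoneda product. Relation (II)(a) amounts to the composite of two socle-embeddings $\Delta_x\to\Delta_y\to\Delta_z$ arising from $z\xto{\alpha_0}y\xto{\alpha_0}x$ in $\Gamma$ being zero; this holds because every standard module has radical length at most two with radical equal to socle (Lemma \ref{stand modules}), so the first map has image $\soc\Delta_y$ which the second map annihilates. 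Equivalently, $\Delta_x$ and $\Delta_z$ are each supported in $|\cdot|^\sharp$-degrees $0$ and $1$, so the bidegree-$(0,2)$ component of $(\Gamma^\dagger)_2$ vanishes. Relation (II)(b): a product $a_ia_j$ with $i,j\ne 0$ for which $\alpha_i\alpha_j$ is not a nonzero path of $\Gamma$ (in particular $a_ia_i$) lies in $\ext^2_\Gamma(\Delta_x,\Delta_z)$ with $d(x,z)\ne 2$, hence is zero by Lemma \ref{gamma totgrad}. For the commutativity relations (I)(a), (I)(b) one runs the Yoneda products at the cochain level using the projective resolutions of the standard modules, which are linear for the $\Delta$-grading (Proposition \ref{grading prop}(3)); a direct computation turns the commuting squares $\alpha_i\alpha_j=\alpha_j\alpha_i$ and $\alpha_0\alpha_i=\alpha_i\alpha_0$ of $\Gamma$ into the identities $a_ia_j=a_ja_i$ and $a_ia_0=a_0a_i$ in $\Gamma^\dagger$.

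For the count, fix $x,z$ and set $d=d(x,z)$. By Lemma \ref{gamma totgrad} the group $e_z(\Gamma^\dagger)_2 e_x$ is nonzero only for $d\in\{2,\,n+1,\,2n\}$ — three distinct values since $n>1$ — and then lives in bidegree $(2,0)$, $(1,1)$, $(0,2)$ respectively, the $(0,2)$ case being always zero by the previous paragraph. On the path side, the length-two paths $x\to z$ in $Q^{\Gamma^\dagger}$ are exactly: the $a_ia_j$ with $i,j\ne 0$ (bidegree $(2,0)$, which forces $d=2$ when the associated $\Gamma$-path is nonzero), the $a_ia_0$ and $a_0a_i$ (bidegree $(1,1)$, forcing $d=n+1$), and $a_0a_0$ (bidegree $(0,2)$, forcing $d=2n$). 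Matching case by case: for $d=2n$ the only path is $a_0a_0$, killed by (II)(a); for $d=n+1$ the paths are the $a_ia_0$ and $a_0a_i$, which the relations (I)(b) coming from the commuting squares $\alpha_0\alpha_i=\alpha_i\alpha_0$ between $x$ and $z$ pair up, leaving a one-dimensional remainder; for $d=2$ the paths are the $a_ia_j$, those coming from a commuting square being identified by (I)(a) and those with $\alpha_i\alpha_j$ not a nonzero path of $\Gamma$ (notably $a_ia_i$) being killed by (II)(b); and for any other $d$ the relations (II)(a), (II)(b) kill all length-two paths. In each case this leaves a space of dimension $\kdim e_z(\Gamma^\dagger)_2 e_x$, so the displayed relations span $\mathcal{R}$.

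The main obstacle is the bookkeeping in this last step. One needs (a) a careful enumeration of the length-two paths in $Q^{\Gamma^\dagger}$, resting on the arrow description $a_0\leftrightarrow\alpha_0$ (same direction, bidegree $(0,1)$) and $a_i\leftrightarrow\alpha_i$ for $i\ne 0$ (reversed direction, bidegree $(1,0)$) recalled above, but requiring the directions to be tracked with some care; and (b) the fact that $\ext^2_\Gamma(\Delta_x,\Delta_z)$ and $\ext^1_{\mathsf{gr}^\sharp\Gamma}(\Delta_x,\Delta_z\langle 1\rangle^\sharp)$ are at most one-dimensional and vanish exactly as dictated by the combinatorics of the quiver of $\Gamma$. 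Lemma \ref{gamma totgrad} together with the $\Delta$-linearity of the projective resolutions of standard modules supplies the graded shape of these $\ext$-groups, but the multiplicity-freeness of the resolution layers — which yields the bound ``at most one-dimensional'', in the spirit of $\kdim\Hom_Z(P_x,P_y)\le 1$ for the zigzag algebra itself — still has to be extracted from the explicit structure of $\Gamma$. Once that is in hand, the count also automatically confirms that there are no further monomial relations; in particular the surviving mixed products $a_ia_0$, $a_0a_i$ are genuinely nonzero.
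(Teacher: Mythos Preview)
Your argument for (II)(a) is correct and matches the paper, and your sketch for (I)(a),(b) is in the right spirit --- the paper carries out exactly those chain-level computations with explicit diagrams. There are, however, two genuine errors in your treatment of (II)(b), and they propagate into your dimension count.

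First, you misread the hypothesis. ``$\alpha_i\alpha_j$ is not defined in the quiver of $\Gamma$'' means that the composable pair of arrows does not exist (the intermediate vertex $x+f_i$ is missing from $Q_0$); it does \emph{not} mean that $\alpha_i\alpha_j$ is zero modulo the relations of $\Gamma$. In particular $a_ia_i$ is \emph{not} an instance of (II)(b): the path $\alpha_i\alpha_i$ is defined in the quiver whenever both arrows are present, and for $i\neq 0$ one has $a_ia_i\neq 0$ in $\Gamma^\dagger$ (this is verified explicitly in the proof of Theorem~\ref{dual algebra}, case (1)). So your parenthetical ``(in particular $a_ia_i$)'' is false, and your count in the $d=2$ case kills a class that is actually nonzero.

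Second, your vanishing argument via Lemma~\ref{gamma totgrad} does not work. If $a_ia_j$ (with $i,j\ne 0$) is a path in $Q^{\Gamma^\dagger}$ from $z$ to $x$, then by construction $x\xto{\alpha_j}y\xto{\alpha_i}z$ \emph{is} a path in $\Gamma$ with $i,j\ne 0$, so $d(x,z)=2$, not $d(x,z)\neq 2$. Lemma~\ref{gamma totgrad} then only tells you that $a_ia_j$ sits in bidegree $(2,0)$; it does not force it to vanish. What actually makes $a_ia_j$ zero when the \emph{other} path $\alpha_i\alpha_j$ is absent is a direct inspection of the $\Delta$-linear projective resolution: the only length-two $\alpha_0$-free path from $x$ to $z$ is $\alpha_j\alpha_i$, and it is nonzero in $\Gamma$ (it lies in the PBW basis), so it cannot support a degree-two syzygy; hence $P_z$ is not a summand of $P^2(x)$ and the Yoneda product collapses. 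This is exactly the paper's argument (up to a harmless swap of the labels $x$ and $z$ in the paper's write-up).

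Finally, note that the completeness you pursue in your ``second point'' is not part of Proposition~\ref{dual relations} at all; that is the content of Theorem~\ref{dual algebra}. Here one only needs to verify that the listed relations hold.
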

\begin{proof} 
For any $v\in I$ denote by $P^\bullet(v)$ a projective resolution of the standard module $\Delta_v$.
\begin{enumerate}[(I)]
  \item \begin{enumerate}[(a)]
          \item Consider the first three terms of a linear projective resolution of $\Delta_x$
          \[ \cdots\to P^2(x) \to \bigoplus_{x\xto{\alpha_i}x',i\neq 0}P_{x'}=P^1(x) \xto{f} P_x \]
In particular $P_y$ and $P_w$ are direct summands of $P^1(x)$ and the restriction of $f$ on these modules is $P_y\oplus P_w\xto{[\alpha_i\cdot,\alpha_j\cdot]}P_x$. The element $e_y\alpha_je_z-e_w\alpha_ie_z$ is such that $f(e_y\alpha_je_z-e_w\alpha_ie_z)=e_x\alpha_i\alpha_j e_z-e_x\alpha_j\alpha_i e_z=0$ hence it lies in the image of $P^2(x)\to P_y\oplus P_w$. But this is a linear map between projective modules, so we must have that $e_y\alpha_je_z-e_w\alpha_ie_z$ is in the image of $P_z\xto{[\alpha_j\cdot,\alpha_i\cdot]}P_y\oplus P_w$ and then $P_z$ must be a direct summand of $P^2(x)$. In particular $a_ia_j\in\ext^2_\Gamma(\Delta_x,\Delta_z)$ is non-zero.
      Moreover $a_ia_j$ and $a_ja_i$ are respectively represented by the following diagrams:
      
      \[ \xymatrix{ \cdots \ar[r] & P_z\oplus Q_2 \ar[r]^-{\left[\bsm \alpha_j\cdot \\ \alpha_i\cdot \esm\right]} \ar[d]^{\mbox{id}} & P_y\oplus P_w \oplus Q_1 \ar[d]^{\left[\bsm 1 & 0 & 0 \esm\right]} \ar[r]^-{\left[\bsm \alpha_i\cdot & \alpha_j\cdot & \ast \esm\right]} & P_x \ar[d]^0 \ar[r]^0 & 0 \\
                   \cdots \ar[r] & P_z\oplus Q'_2 \ar[d]^{\left[\bsm \mbox{id} & 0 \esm\right]} \ar[r]^-{\left[\bsm \alpha_j\cdot & \ast \\ \ast & \ast \esm\right] } & P_y \ar[d]^0 \ar[r]^0 & 0 & \\ \cdots \ar[r] & P_z \ar[r]^0  & 0 &  & } \]

and 

\[ \xymatrix{ \cdots \ar[r] & P_z\oplus Q_2 \ar[r]^-{\left[\bsm \alpha_j\cdot \\ \alpha_i\cdot \esm\right]} \ar[d]^{\mbox{id}} & P_y\oplus P_w \oplus Q_1 \ar[d]^{\left[\bsm 1 & 0 & 0 \esm\right]} \ar[r]^-{\left[\bsm \alpha_i\cdot & \alpha_j\cdot & \ast \esm\right]} & P_x \ar[d]^0 \ar[r]^0 & 0 \\
                   \cdots \ar[r] & P_z\oplus Q''_2 \ar[d]^-{\left[\bsm \mbox{id} & 0 \esm\right]} \ar[r]^{\left[\bsm \alpha_i\cdot & \ast \\ \ast & \ast \esm\right] } & P_w \ar[d]^0 \ar[r]^0 & 0 & \\ \cdots \ar[r] & P_z \ar[r]^0  & 0 &  & } \]
where $Q_1,Q_2,Q'_2$ and $Q''_2$ are projective $\Gamma$-modules such that $P_y\oplus P_w\oplus Q_1=P^1(x), P_z\oplus Q_2=P^2(x), P_z\oplus Q'_2=P^1(y)$ and $P_z\oplus Q''_2=P^1(w)$. Therefore we see that $a_ia_j=a_ja_i\in \ext^2_\Gamma(\Delta_x,\Delta_z)$.
         \item An element $a_ia_0\in \ext^1_\Gamma(\Delta_x,\Delta_w)\Hom_\Gamma(\Delta_y,\Delta_x)$, $i\neq 0$, is given by a diagram:
         
\[ \xymatrix{\cdots \ar[r] & P^2(y) \ar[r] \ar[d] & P_z\oplus Q \ar[r]^-{\left[\bsm \alpha_i\cdot & \ast \esm\right]}\ar[d]^{\left[\bsm \alpha_0\cdot & 0 \\ \ast & \ast \esm\right]} & P_y \ar[d]^{\alpha_0\cdot} \ar[r]^0 & 0 \\      \cdots \ar[r] & P^2(x)\ar[d] \ar[r] & P_w\oplus Q' \ar[d]^{\left[\bsm \mbox{id} & 0 \esm\right]} \ar[r]^-{\left[\bsm \alpha_i\cdot & \ast \esm\right]} & P_x \ar[d]^0 \ar[r]^0 & 0 \\ \cdots \ar[r] & P^2(w) \ar[r] & P_w \ar[r]^0 & 0 &  } \]

where $Q$ and $Q'$ are projective modules such that $P_z\oplus Q=P^1(y)$ and $P_w\oplus Q'=P^1(x)$. Note that the map $P_z\oplus Q \xto{\left[\bsm \alpha_0\cdot & 0 \\ \ast & \ast \esm\right]} P_w\oplus Q'$ has to be in $\Delta$-degree zero. Therefore the top-right entry of the corresponding matrix is zero since $P_z\to P_w$ is the unique $\Delta$-degree zero map with codomain $P_w$.

On the other hand the element $a_0a_i\in\Hom_\Gamma(\Delta_z,\Delta_w)\ext^1_\Gamma(\Delta_y,\Delta_z)$ is given by:

\[ \xymatrix{\cdots \ar[r] &  P_z\oplus Q \ar[r]^-{\left[\bsm \alpha_i\cdot & \ast \esm\right]}\ar[d]^{\left[\bsm \mbox{id} & 0 \esm\right]} & P_y \ar[d]^0 \ar[r]^0 & 0 \\      \cdots \ar[r] &  P_z \ar[d]^{\alpha_0\cdot} \ar[r]^0 & 0 \ar[d]^0  &  \\ \cdots \ar[r] & P_w \ar[r]^0 & 0 &  } \]

Hence we can conclude that $a_ia_0=a_0a_i$ since $\left[\bsm\mbox{id} & 0\esm\right]\left[\bsm \alpha_0\cdot & 0 \\ \ast & \ast \esm\right]=\left[\bsm\alpha_0\cdot & 0\esm\right]=\alpha_0 \left[\bsm\mbox{id} & 0\esm\right]$.
        \end{enumerate}
        
  \item \begin{enumerate}[(a)]
         \item Obvious, since by the structure of standard modules we have\\ $\Hom_\Gamma(\Delta_y,\Delta_z)\Hom_\Gamma(\Delta_x,\Delta_y)=0$ for any $x,y,z\in I$.
         \item Assume $i,j\neq 0$, so that $a_ia_j$ comes from some non-zero composition $x\xto{\alpha_j}y\xto{\alpha_i}z$ in the quiver of $\Gamma$ and we have no paths $\xymatrix{x\ar@{-->}[r]^{\alpha_i\alpha_j} & z}$. Let $\cdots\to P^2(z)\to P^1(z)\to P_z\onto\Delta_z$ be a graded linear projective resolution of $\Delta_z$; under our assumptions $P_x$ can not appear as a direct summand of $P^2(z)$ since the composition $P^2(z)\to P^1(z)\to P_z$ restricted to $P_x$ must coincide with $\alpha_j\alpha_i\cdot$ and so it would be non-zero. This means that $\ext^2_\Gamma(\Delta_z,\Delta_x)\cong\Hom_\Gamma(P^2(z),\Delta_x)=0$.

        \end{enumerate}
\end{enumerate} \end{proof}

\begin{thm} \label{dual algebra}
The algebra $\Gamma^\dagger$ is isomorphic to the path algebra of the following quiver:
%\begin{itemize}\renewcommand{\labelitemi}{$-$}
 \[Q_0^{\Gamma^\dagger}=Q_0^{\Gamma}\] \[\begin{split}Q_1^{\Gamma^\dagger}= & \left\lbrace x\xto{a_0}y : \mbox{ there exists } x\xto{\alpha_0}y \in Q_1^{\Gamma} \right\rbrace\cup \\ & \left\lbrace w\xto{a_i}z : \mbox{ there exists } z\xto{\alpha_i}w \in Q_1^{\Gamma}, i\neq 0 \right\rbrace \end{split}\] 
%\end{itemize}
bound by the ideal of relations $\mathcal{R}$ generated by elements as in Proposition \ref{dual relations}.
\end{thm}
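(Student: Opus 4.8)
The plan is to exhibit a surjective algebra homomorphism from the path algebra of the displayed quiver modulo $\mathcal{R}$ onto ${\Gamma^\dagger}$, and then to promote it to an isomorphism using the fact, already available to us, that ${\Gamma^\dagger}$ is quadratic. Only the last step involves any real work.

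For the surjection: by Theorem~\ref{dual is koszul} the algebra ${\Gamma^\dagger}$ with its total grading $|\cdot|^{tot}$ is Koszul in the classical sense, hence is generated over its semisimple degree-zero part $({\Gamma^\dagger})_0=kQ_0^{\Gamma^\dagger}$ by its degree-one component, and the analysis carried out earlier in this subsection identifies that component --- equivalently $\rad{\Gamma^\dagger}/\rad^2{\Gamma^\dagger}$ --- with the span of exactly the arrows listed in $Q_1^{\Gamma^\dagger}$: one arrow $a_0\colon x\to y$ of $\ext$-degree zero for each $x\xto{\alpha_0}y$ in $Q_1^\Gamma$, generating $\Hom_\Gamma(\Delta_y,\Delta_x)$, and one arrow $a_i\colon w\to z$ of $\ext$-degree one for each $z\xto{\alpha_i}w$ in $Q_1^\Gamma$ with $i\neq0$, generating an irreducible morphism in $\ext^1_\Gamma(\Delta_z,\Delta_w)$. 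Sending each $a_\bullet$ to the corresponding class in $\ext^*_\Gamma(\Delta,\Delta)$ thus defines a surjection $\psi\colon kQ^{\Gamma^\dagger}\onto{\Gamma^\dagger}$, and Proposition~\ref{dual relations} shows that every generator of $\mathcal{R}$ lies in $\ker\psi$; so $\psi$ factors through a surjection $\bar\psi\colon kQ^{\Gamma^\dagger}/\mathcal{R}\onto{\Gamma^\dagger}$ that is the identity in degrees $0$ and $1$.

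To upgrade $\bar\psi$ to an isomorphism I would argue as follows. Since ${\Gamma^\dagger}$ is Koszul it is quadratic by \cite[Corollary~2.3.3]{BSGkoszul}, and $kQ^{\Gamma^\dagger}/\mathcal{R}$ is quadratic by construction; writing both as $T_{kQ_0^{\Gamma^\dagger}}(V)/(R)$ with $V=kQ_1^{\Gamma^\dagger}$ and noting $\bar\psi|_V=\mathrm{id}_V$, the map $\bar\psi$ is an isomorphism if and only if it is injective in degree two, i.e.\ if and only if the two spaces of quadratic relations agree. As $\mathcal{R}$ sits inside the relations of ${\Gamma^\dagger}$, this reduces to the single equality $\dim_k(kQ^{\Gamma^\dagger}/\mathcal{R})_2=\dim_k({\Gamma^\dagger})_2$. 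The left-hand side is read off the presentation: using $\mathcal{R}$ one checks that every length-two path in $Q^{\Gamma^\dagger}$ is, modulo $\mathcal{R}$, either zero (the cases $a_0a_0$, and $a_ia_j$ with $i,j\neq0$ and $\alpha_i\alpha_j$ undefined in $Q_1^\Gamma$) or equal to a distinguished ``sorted'' representative coming from a commutativity relation, these surviving representatives corresponding bijectively to the commutative squares of $Q^\Gamma$ built from two arrows $\alpha_i,\alpha_j$ with $i,j\neq0$, together with those built from one arrow $\alpha_0$ and one arrow $\alpha_i$. The right-hand side is handled by Lemma~\ref{gamma totgrad}, which gives $({\Gamma^\dagger})_2=\bigoplus\ext^i_\Gamma(\Delta_x,\Delta_z\langle j\rangle^\sharp)$ summed over $d(x,z)-j(n-1)=2$, so only $(i,j)=(2,0)$ and $(i,j)=(1,1)$ contribute, each summand being one-dimensional by the rigidity of the linear resolutions of standard modules; the $(2,0)$-summands match the first family of squares and the $(1,1)$-summands the second. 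Equating the two counts gives the claim.

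The main obstacle is precisely this last count: one must verify that the case analysis in the proof of Proposition~\ref{dual relations} genuinely exhausts all length-two paths in $Q^{\Gamma^\dagger}$ --- in particular that no extra monomial relation $a_0a_i=0$ or $a_ia_0=0$ is forced (which it is not, because in the zigzag quiver a composable pair $\alpha_0,\alpha_i$ with $i\neq0$ always completes to a commutative square) --- and that each contributing $\ext$-space in degree two is exactly one-dimensional. An alternative that makes the bookkeeping cleaner is to bypass the direct path count: using the equivalence $G_\Delta$ exactly as in the proof of Theorem~\ref{dual is koszul}, one has $\ext^\bullet_{{\Gamma^\dagger}}(S,S)\cong\ext^\bullet_\Gamma(\nabla,\nabla)$ up to a regrading, and one can then read the quiver of ${\Gamma^\dagger}$ off $\ext^1_\Gamma(\nabla,\nabla)$ and its quadratic relations off $\ext^2_\Gamma(\nabla,\nabla)$ modulo Yoneda products, computing everything from the explicit injective coresolutions of the costandard $\Gamma$-modules supplied by Proposition~\ref{quasi hered}.
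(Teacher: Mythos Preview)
Your approach is essentially the paper's: both reduce, via the Koszulity and hence quadraticity of $\Gamma^\dagger$ from Theorem~\ref{dual is koszul}, to checking that $\mathcal{R}$ exhausts the degree-two relations, and both handle this by a case analysis of length-two paths in $Q^{\Gamma^\dagger}$. The paper is slightly more direct: rather than separately computing $\dim(kQ^{\Gamma^\dagger}/\mathcal{R})_2$ and $\dim(\Gamma^\dagger)_2$ and comparing, it simply runs through the three possible shapes of $e_z(kQ^{\Gamma^\dagger})_2e_x$ --- a single path $a_ia_i$, a pair $\{a_ia_j,a_ja_i\}$ forming a mesh, or a single path $a_ia_j$ with $a_ja_i$ undefined --- and in each case either points to the relevant relation in $\mathcal{R}$ or, crucially, exhibits an explicit nonzero extension (already constructed in the proof of Proposition~\ref{dual relations}) to show no further relation can hold. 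This sidesteps your appeal to ``rigidity of the linear resolutions'' to get one-dimensionality of the relevant $\ext$-spaces, which you correctly flag as the main obstacle but do not actually carry out. One small slip in your bookkeeping: the surviving length-two classes are not only those coming from commutative squares --- the paths $a_ia_i$ with $i\neq 0$ are single paths, not meshes, and must be counted separately (the paper's case~(1)); your bijection with ``squares'' would undercount $(kQ^{\Gamma^\dagger}/\mathcal{R})_2$. Your alternative via $\ext^\bullet_\Gamma(\nabla,\nabla)$ is a legitimate route and not the one the paper takes.
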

\begin{proof}
We are left to prove that the relations of Proposition \ref{dual relations} are the only quadratic relations of $\Gamma^\dagger$. First of all note that, by the description of the quiver $Q^\Gamma$ given in Definition \ref{defin zigzag} and after reversing the arrows $\alpha_i$ with $i\neq 0$, given any two vertices $x,z\in Q_0^{\Gamma^\dagger}$ the $k$-basis of the vector space $z(kQ^{\Gamma^\dagger})_2x$ is either (1) the element $a_ia_i$ with $i\in \{0,\ldots ,n\}$, or (2) the set $\{a_ia_j, a_ja_i\}$ or (3) the element $a_ia_j$ with $i,j\in \{0,\ldots ,n\}$, $i\neq j$, such that the composition $a_ja_i$ is not defined in the quiver. Case (2) occurs precisely when the path $a_ia_j$ is part of a mesh $\vcenter{\xymatrix@=1em{ & y \ar[dl]_{a_i} & \\ x & & z \ar[ul]_{a_j}\ar[dl]^{a_i}\ar@{.}[ll] \\  & w \ar[ul]^{a_j} &  }}$ for $i,j\in \{0,1,\cdots,n\}$, as described in Proposition \ref{dual relations}. Let us discuss the possible relations in these three cases:
\begin{itemize}
\item[(1)] The elements $a_0a_0$ are zero in $\Gamma^\dagger$ by Proposition \ref{dual relations} (II)(a). The elements $a_ia_i$ for $i\neq 0$ are non-zero since the same argument used in Proposition \ref{dual relations} (I)(a) for $i=j$ shows an explicit extension.

\item[(2)] Any element $a_ia_j$ that is part of a square is subject to the commutativity relation $a_ia_j=a_ja_i$ by Proposition \ref{dual relations} (I)(a,b). Moreover such an element is non-zero since, as before, an explicit extension is given in the proof of Proposition \ref{dual relations}.% Hence $z(kQ^{\Gamma^\dagger})_2x$ is one-dimensional.

\item[(3)] Let $a_ia_j$ be a path of length two and suppose that $i=0$ and $j\neq 0$. Then any composition $a_0a_j$ in the quiver of ${\Gamma^\dagger}$ comes from two arrows $\vcenter{\xymatrix@=1em{ y \ar[dr]^{\alpha_j} & \\
                        & z\\
                        w \ar[ur]_{\alpha_0} &  }}$ in the quiver of $\Gamma$. But any such couple of arrows is part of a square $\vcenter{\xymatrix@=1em{ & y \ar[dr]^{\alpha_j} & \\
                       x \ar[ur]^{\alpha_0}\ar[dr]_{\alpha_j}\ar@{.}[rr] & & z\\
                        & w \ar[ur]_{\alpha_0} &  }}$ where $\alpha_0\alpha_j\neq 0$ is defined. Moreover the corresponding square in $Q^{\Gamma^\dagger}$ is commutative by Proposition \ref{dual relations} (I)(b) . The case for $j=0$ and $i\neq 0$ is similar.
Hence, for elements $a_ia_j$ such that $a_ja_i$ is not part of the quiver, we can always assume $i,j\neq 0$ and these elements are zero by Proposition \ref{dual relations} (II)(b). 
\end{itemize}
We have shown all the possible quadratic relations so the proof is complete.
\end{proof}
%\newpage

%\bibliography{bibliogabri.bib}{}
%\bibliographystyle{alpha}

\end{document}